\newcommand\footnoteref[1]{\protected@xdef\@thefnmark{\ref{#1}}\@footnotemark}
\newtheorem{theorem}{Theorem}
\newtheorem{example}[theorem]{Example}
\newtheorem{definition}[theorem]{Definition}
\newtheorem{lemma}[theorem]{Lemma}
\newtheorem{proposition}[theorem]{Proposition}
\newcommand{\ba}{\[
	\begin{aligned}}
\newcommand{\ea}{
	\end{aligned}
	\]}
\newcommand{\baa}{\begin{equation}
	\begin{aligned}}
\newcommand{\eaa}{
	\end{aligned}
	\end{equation}}
\begin{document}
\tikzstyle{labelnode}=[circle, draw=white, fill=white]

%

  %

\title{Scale Invariant Power Iteration}
\author[1]{Cheolmin Kim}
\author[2]{Youngseok Kim}
\author[1]{Diego Klabjan}
\affil[1]{Department of Industrial Engineering and Management Sciences, Northwestern University}
\affil[2]{Department of Statistics, University of Chicago}
\date{\vspace{-5ex}}

\maketitle

\begin{abstract}
Power iteration has been generalized to solve many interesting problems in machine learning and statistics. Despite its striking success, theoretical understanding of when and how such an algorithm enjoys good convergence property is limited. In this work, we introduce a new class of optimization problems called scale invariant problems and prove that they can be efficiently solved by scale invariant power iteration (SCI-PI) with a generalized convergence guarantee of power iteration. By deriving that a stationary point is an eigenvector of the Hessian evaluated at the point, we show that scale invariant problems indeed resemble the leading eigenvector problem near a local optimum. Also, based on a novel reformulation, we geometrically derive SCI-PI which has a general form of power iteration. The convergence analysis shows that SCI-PI attains local linear convergence with a rate being proportional to the top two eigenvalues of the Hessian at the optimum. Moreover, we discuss some extended settings of scale invariant problems and provide similar convergence results for them. In numerical experiments, we introduce applications to independent component analysis, Gaussian mixtures, and non-negative matrix factorization. Experimental results demonstrate that SCI-PI is competitive to state-of-the-art benchmark algorithms and often yield better solutions.
\end{abstract}

\section{Introduction}

We consider a generalization of power iteration for finding the leading eigenvector of a matrix $A$. Power iteration repeats $x_{k+1} \leftarrow Ax_k / \| Ax_k \|$ until some stopping criterion is satisfied. Since no hyperparameter is required, this update rule is practical yet attains global linear convergence with the rate of $|\lambda_2|/|\lambda_1|$ where $|\lambda_i|$ is the $i^{th}$ largest absolute eigenvalue of $A$. This linear convergence result is analogous to that of gradient descent for convex optimization. Therefore, many variants including coordinate-wise \citep{lei2016coordinate}, momentum \citep{xu2018accelerated}, online \citep{boutsidis2015online, garber2015online}, stochastic \citep{oja1982simplified}, stochastic variance-reduced (VR) \citep{shamir2015stochastic,shamir2016fast,kim2019stochastic}, and stochastic VR with momentum \citep{xu2018accelerated,kim2019stochastic} power iterations have been developed, drawing a parallel literature to gradient descent for convex optimization. 

A general form of power iteration has been used to solve 
\begin{equation} \label{prob:main}
\textrm{maximize} \quad f(x) \quad \textrm{subject to} \quad x \in \partial\mathcal{B}_d \triangleq \{x \in \mathbb{R}^d : \| x \| = 1 \}
\end{equation}
in many applications such as sparse principal component analysis (PCA) \citep{journee2010generalized, luss2013conditional}, $L_1$-norm kernel PCA \citep{kim2019simple}, phase synchronization \citep{liu2017estimation}, and the Burer-Monteiro factorization of semi-definite programs \citep{erdogdu2018convergence}. (All norms are 2-norms unless indicated otherwise.) Nevertheless, theoretical understanding of when such algorithms enjoy the attractive convergence property of power iteration is limited. Only global sublinear convergence has been shown for convex $f$ \citep{journee2010generalized}, not generalizing the appealing linear convergence property of power iteration.

In view of manifold optimization \citep{absil2009optimization}, scale invariant problems \eqref{prob:main} can be seen as an optimization problem on the real projective plane.
Through reformulations, one can obtain an unconstrained optimization problem on the embedding space, which can be solved by general non-convex optimization algorithms such as gradient-based methods with line search or trust region methods. However, these algorithms require hyperparameters such as the step size while power iteration does not.

In this work, we introduce a new class of optimization problems called \textit{scale invariant problems} and show that they can be efficiently solved by a general form of power iteration called \textit{scale invariant power iteration} (SCI-PI) with a generalized convergence guarantee of power iteration. 
We say that an optimization problem is a scale invariant problem if the objective function $f$ is \textit{scale invariant} in \eqref{prob:main}. A function $f$ is called scale invariant, which is rigorously defined later, if its geometric surface is invariant under constant multiplication of $x$. Many important optimization problems in statistics and machine learning can be formulated as scale invariant problems, for instance, $L_p$-norm kernel PCA and maximum likelihood estimation of mixture proportions, to name a few. Moreover, as studied herein, independent component analysis (ICA), non-negative matrix factorization (NMF), and Gaussian mixture models (GMM) can be formulated as extended settings of scale invariant problems.

Derivatives of scale invariant functions have the interesting relation that $\nabla^2 f(x) x = k \nabla f(x)$ holds for some $k$. Using the KKT condition, we derive an eigenvector property stating that any stationary point $x^*$ satisfying $\nabla f(x^*) = \lambda^* x^*$ for some $\lambda^*$ is an eigenvector of $\nabla^2 f(x^*)$. Due to the eigenvector property, scale invariant problems can be locally seen as the leading eigenvector problem. Therefore, we can expect that a simple update rule like power iteration would efficiently solve scale invariant problems near a local optimum $x^*$. Another interesting property of scale invariant problems is that by swapping the objective function and the constraint, a geometrically interpretable dual problem with the goal of finding the closest point $w$ to the origin from the constraint $f(w)=1$ is obtained. By mapping an iterate $x_k$ to the dual space, taking a descent step in the dual space and mapping it back to the original space, we geometrically derive SCI-PI, which replaces $Ax_k$ with $\nabla f(x_k)$ in power iteration. We show that SCI-PI converges to a local maximum $x^*$ at a linear rate when initialized close to it. The convergence rate is proportional to $\bar{\lambda}_2$ / $\lambda^*$ where $\bar{\lambda}_2$ is the spectral norm of $\nabla^2 f(x^*) (I-x^* (x^*)^T)$ and $\lambda^*$ is the Lagrange multiplier corresponding to $x^*$, generalizing the convergence rate of power iteration. Moreover, under some mild conditions, we provide an explicit expression regarding the initial condition on $\|x_0 - x^* \|$ to ensure convergence.

In the extended settings, we discuss three variants of \eqref{prob:main}. In the first setting, we consider a sum of scale invariant functions as an objective function. This setting covers a Kurtosis-based ICA and can be solved by SCI-PI with similar convergence guarantees. Second, we consider a block version of scale invariant problems which covers NMF and the Burer-Monteiro factorization of semi-definite programs. To solve this block scale invariant problem, we present a block version of SCI-PI and show that it attains linear convergence in a two-block case. Lastly, we consider partially scale invariant problems which include general mixture problems such as GMM. For this partially scale invariant problems, we present an alternative algorithm based on SCI-PI and gradient ascent along with its convergence analysis. In numerical experiments, we benchmark the proposed algorithms against state-of-the-art methods for KL-NMF, GMM and ICA. The experimental results show that our algorithms are computationally competitive and result in better solutions in ``most'' if we do not beat in all herein studied cases.

Our work has the following contributions.
\begin{enumerate}
    \item We introduce scale invariant problems which cover interesting examples in statistics and machine learning yet can be efficiently solved by a general form of power iteration due to the eigenvector property.
    \item We present a geometric derivation of SCI-PI and provide a convergence analysis for it. We show that SCI-PI converges to a local maximum $x^*$ at a linear rate when initialized close to $x^*$. This generalizes the attractive convergence property of power iteration. Moreover, we introduce three extended settings of scale invariant problems along with solution algorithms and their convergence analyses.
    \item We report numerical experiments including a novel reformulation of KL-NMF to a block scale invariant problem. The experimental results demonstrate that SCI-PI is not only computationally competitive to state-of-the-art methods but also often yield better solutions.
\end{enumerate}

The paper is organized as follows. In Section~\ref{sec:optimization-problem}, we define scale invariance and present interesting properties of scale invariant problems including an eigenvector property and a dual formulation. We then provide a geometric derivation of SCI-PI and a convergence analysis in Section~\ref{sec:generalized-power-method}. The extended settings are discussed in Section~\ref{sec:extended-settings} and we report the numerical experiments in Section~\ref{sec:numerical-experiments}.

\section{Scale Invariant Problems}
\label{sec:optimization-problem}
Before presenting properties of scale invariant problems, we first define scale invariant functions.
\begin{definition}
We say that a function $f:\mathbb{R}^{d} \rightarrow \mathbb{R}$ is multiplicatively scale invariant if it satisfies
\begin{equation}
f(cx) = u(c)f(x) \label{def:multiplicative}
\end{equation}
for some even function $u:\mathbb{R} \rightarrow \mathbb{R}^{+}$ with $u(0) = 0$. Also, we say that $f:\mathbb{R}^{d} \setminus \{0\} \rightarrow \mathbb{R}$ is additively scale invariant if it satisfies 
\begin{equation}
f(cx) = f(x) + v(c) \label{def:additive}
\end{equation}
for some even function $v:\mathbb{R} \setminus \{0\} \rightarrow \mathbb{R}$ with $v(1) = 0$.
\end{definition}
The following proposition characterizes the exact form of $u$ and $v$ for continuous $f$.
\begin{proposition} \label{prop:homogeneity} If a continuous function $f \neq 0$ satisfies \eqref{def:multiplicative} with a multiplicative factor $u$, then we have 
\begin{equation}
u(c)=|c|^p \label{eq:multiplicative-factor}
\end{equation}
for some $p > 0$. Also, if a continuous function $f$ satisfies \eqref{def:additive} with an additive factor $v$, then we have 
\begin{equation}
v(c)=\log_{a} |c| \label{eq:additive-factor}
\end{equation}
for some $a$ such that $0<a$ and $a \neq 1$.
\end{proposition}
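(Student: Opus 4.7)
The plan is to reduce both claims to classical continuous solutions of Cauchy-type functional equations that $u$ and $v$ must satisfy, obtained by iterating the scale-invariance relation twice. Once those equations are established, I would invoke the standard facts that continuous solutions of the multiplicative Cauchy equation $u(c_{1}c_{2})=u(c_{1})u(c_{2})$ on $\mathbb{R}_{>0}$ are power functions $c^{p}$ and that continuous solutions of $v(c_{1}c_{2})=v(c_{1})+v(c_{2})$ are logarithms $q\log c$.

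For the multiplicative case, since $f\neq 0$ I would pick $x_{0}$ with $f(x_{0})\neq 0$ and apply \eqref{def:multiplicative} twice at the point $c_{2}x_{0}$:
\[
u(c_{1}c_{2})f(x_{0})=f(c_{1}c_{2}x_{0})=u(c_{1})f(c_{2}x_{0})=u(c_{1})u(c_{2})f(x_{0}).
\]
This yields $u(c_{1}c_{2})=u(c_{1})u(c_{2})$ and, in particular, $u(1)=1$; continuity of $u$ follows from the identity $u(c)=f(cx_{0})/f(x_{0})$. Restricted to $c>0$, the substitution $g(t)=\log u(e^{t})$ reduces the equation to the continuous additive Cauchy equation $g(t_{1}+t_{2})=g(t_{1})+g(t_{2})$, whose continuous solutions are linear, forcing $g(t)=pt$ and hence $u(c)=c^{p}$ on $c>0$. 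Extending by the assumed evenness of $u$ gives $u(c)=|c|^{p}$ on $\mathbb{R}\setminus\{0\}$, and the boundary condition $u(0)=0$ together with $u>0$ elsewhere pins down $p>0$.

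The additive case is handled analogously. Fixing any $x\in\mathbb{R}^{d}\setminus\{0\}$ and iterating \eqref{def:additive} at $c_{2}x$ yields
\[
f(x)+v(c_{1}c_{2})=f(c_{1}c_{2}x)=f(c_{2}x)+v(c_{1})=f(x)+v(c_{1})+v(c_{2}),
\]
so that $v(c_{1}c_{2})=v(c_{1})+v(c_{2})$, compatible with the stated $v(1)=0$; continuity of $v$ comes from $v(c)=f(cx)-f(x)$. Setting $h(t)=v(e^{t})$ reduces this to the continuous additive Cauchy equation, giving $h(t)=qt$ for some $q\in\mathbb{R}$, hence $v(c)=q\log c$ on $c>0$. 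Writing $q=1/\ln a$ puts this in the stated form $v(c)=\log_{a}c$ with $a>0$ and $a\neq 1$, and the evenness of $v$ extends the formula to $v(c)=\log_{a}|c|$.

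Nothing in this outline is genuinely deep, so there is no single hard step; the main subtleties are (i) ensuring that $u$ and $v$ inherit continuity from $f$, which in the multiplicative case requires a judicious choice of base point so as not to divide by zero, (ii) correctly handling evenness and the sign constraint on the exponent $p$, and (iii) tacitly excluding the degenerate sub-case $q=0$ in the additive statement, which would correspond to $v\equiv 0$ and to $f$ being constant on each ray through the origin.
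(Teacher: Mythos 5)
Your proposal is correct and follows essentially the same route as the paper: derive the multiplicative/additive Cauchy equations for $u$ and $v$ by iterating the scale-invariance relation at a base point with $f(x_0)\neq 0$, pass to the additive Cauchy equation via $t\mapsto e^{t}$, invoke the classical continuous solutions, and extend by evenness. The only cosmetic difference is in how $p>0$ is forced: the paper argues directly from continuity of $f$ at the origin ($\lim_{r\to 0_+}f(rx)$ would diverge if $p<0$ and $u$ would be identically $1$ if $p=0$), whereas you transfer continuity to $u$ via $u(c)=f(cx_0)/f(x_0)$ and use $u(0)=0$ — equivalent in substance, though the decisive ingredient is continuity of $u$ at $0$ rather than positivity of $u$ elsewhere.
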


\begin{proof}
We first consider the multiplicative scale invariant case. Let $x$ be a point such that $f(x) \neq 0$. Then, we have
\begin{align*}
f(rsx) = u(rs) f(x) = u(r)u(s) f(x),
\end{align*}
which results in
\begin{align*}
u(rs) = u(r)u(s)
\end{align*}
for all $r,s \in \mathbb{R}$. Let $g(r) = \text{ln}(u(e^r))$. Then, we have
\begin{align*}
g(r+s) = \text{ln}(u(e^{r+s})) = \text{ln}(u(e^re^s)) = \text{ln}(u(e^r)) +\text{ln}(u(e^s)) = g(r) + g(s),
\end{align*}
which implies that $g$ satisfies the first Cauchy functional equation. Since $f$ is continuous, so is $u$ and thus $g$. Therefore, by \citep[pp.~81-82]{sahoo2011introduction}, we have
\begin{align}
g(r) = r g(1)
\label{eq:Cauchy-multiplicative-solution}
\end{align}
for all $r \geq 0$. From the definition of $g$ and \eqref{eq:Cauchy-multiplicative-solution}, we have
\begin{align}
u(e^r) = e^{g(r)} = (e^r)^{g(1)}.
\label{eq:Cauchy-multiplicative-solution-2}
\end{align}
Representing $r > 0$ as $r=e^{\text{ln}(r)}$ and using \eqref{eq:Cauchy-multiplicative-solution-2}, we obtain
\begin{align*}
u(r) = u \left( e^{\text{ln}(r)} \right) = r^{g(1)} = r^{\text{ln} \left( u(e) \right)} = r^p.
\end{align*}
Since $f(x) \neq 0$, if $p = \text{ln}(u(e)) < 0$, then we have
$$
\text{lim}_{r \rightarrow 0_+} f(r{x}) = \text{lim}_{r \rightarrow 0_+} u(r) f({x}) = f({x}) \cdot \text{lim}_{r \rightarrow 0_+} r^p = f(x) \cdot \infty \neq f(0) < \infty,
$$
contradicting the fact that $f$ is continuous at $0$. Also, if $p=0$, then we get $u(r)=1$, which contradicts $u(0)=0$. Therefore, we must have $p>0$. From $u$ being an even function, we finally have
\begin{align*}
u(r) = |r|^p
\end{align*}
for $r \in \mathbb{R}$.

\vspace{2mm}

Now, consider the additive scale invariant case. For any $x \in \text{dom}(f)$, we have
\begin{align*}
f(rsx) = f(x) + v(rs) = f(x) + v(r) + v(s),
\end{align*}
which results in
\begin{align*}
v(rs) = v(r) + v(s)
\end{align*}
for all $r,s \in \mathbb{R}$. Let $g(r) = v(e^r)$. Then, we have
\begin{align*}
g(r+s) = v(e^{r+s}) = v(e^re^s) = v(e^r) + v(e^s) = g(r) + g(s).
\end{align*}
Since $g$ is continuous and satisfies the second Cauchy functional equation, by
\citep[pp.~83-84]{sahoo2011introduction}, we have
\begin{align*}
g(r) = r g(1)
\end{align*}
for all $r \geq 0$. For $r>0$, letting $r = e^{\text{ln}(r)}$, we have
\begin{align*}
v(r) = v(e^{\text{ln}(r)}) = g(\text{ln}(r)) = g(1) \text{ln}(r) = v(e) \text{ln}(r) = \text{log}_a (r)
\end{align*}
where $a = e^{\frac{1}{v(e)}}$. Note that $a$ satisfies $0<a$ and $a \neq 1$. From the fact that $v$ is an even function, we finally have
\begin{align*}
v(r) = \text{log}_a |r|
\end{align*}
for $r \in \mathbb{R} \setminus \{0\}$.
\end{proof}

Using the explicit forms of $u$ and $v$ in Proposition~\ref{prop:homogeneity}, we establish derivative-based properties of scale invariant functions below.

\begin{proposition} \label{prop:scale-inv}
Suppose that $f$ is twice differentiable. If $f$ satisfies \eqref{def:multiplicative} with a multiplicative factor $u(c)=|c|^p$, we have
\baa
c \nabla f(cx) = |c|^p \nabla f(x), \quad \nabla f(x)^T x = p f(x), \quad \nabla^2 f(x)x = (p-1)\nabla f(x).
\eaa
Also, if $f$ satisfies \eqref{def:additive} with an additive factor $v(c) = \log_a |c|$, we have
\baa
c\nabla f(cx) = \nabla f(x), \quad \nabla f(x)^T x = \log^{-1}(a),\quad \nabla^2 f(x)x = -\nabla f(x).
\eaa
\end{proposition}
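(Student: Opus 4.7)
The plan is to obtain all six identities by differentiating the defining scale invariance equations (with $u$ and $v$ in the explicit forms supplied by Proposition~\ref{prop:homogeneity}) and then evaluating at $c=1$. Since $f$ is assumed twice differentiable, both sides are smooth in $c$ for $c>0$, so the manipulations are justified without any extra hypothesis.

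For the multiplicative case $f(cx)=|c|^p f(x)$, I would first differentiate in $x$ (chain rule) to get $c\,\nabla f(cx)=|c|^p \nabla f(x)$, which is the first identity. Setting $c=1$ already gives a trivial tautology, so to extract scalar information I would differentiate the original equation in $c$: the left side gives $\nabla f(cx)^T x$ and the right side gives $p|c|^{p-1}\mathrm{sgn}(c)\,f(x)$; evaluating at $c=1$ yields the Euler-type identity $\nabla f(x)^T x = p f(x)$. Finally, to reach the Hessian identity I would differentiate $c\,\nabla f(cx)=|c|^p\nabla f(x)$ in $c$, producing $\nabla f(cx)+c\nabla^2 f(cx)x = p|c|^{p-1}\mathrm{sgn}(c)\nabla f(x)$, and then set $c=1$, which collapses to $\nabla f(x)+\nabla^2 f(x)x = p\nabla f(x)$, i.e. $\nabla^2 f(x)x = (p-1)\nabla f(x)$.

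The additive case proceeds identically, now starting from $f(cx) = f(x) + \log_a|c|$. Differentiating in $x$ gives $c\,\nabla f(cx) = \nabla f(x)$ immediately. Differentiating in $c$ gives $\nabla f(cx)^T x = \tfrac{d}{dc}\log_a|c| = 1/(c\ln a)$, so evaluating at $c=1$ yields $\nabla f(x)^T x = \log^{-1}(a)$, interpreting $\log^{-1}(a)$ as $1/\ln a$. Differentiating $c\nabla f(cx)=\nabla f(x)$ in $c$ gives $\nabla f(cx)+c\nabla^2 f(cx)x = 0$; at $c=1$ this is $\nabla^2 f(x)x = -\nabla f(x)$.

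There is essentially no obstacle here beyond bookkeeping: the only mild care required is handling the absolute value / sign in $|c|^p$ and $\log_a|c|$, which is why I would restrict to $c>0$ in the course of the computation (the statements themselves are free of $c$, so this is harmless). Since $u$ and $v$ are known explicitly from Proposition~\ref{prop:homogeneity}, no further regularity beyond the standing assumption of twice differentiability is needed to justify exchanging differentiation in $x$ and $c$.
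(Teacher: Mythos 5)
Your proposal is correct and follows essentially the same route as the paper: differentiate the scale invariance identity with respect to $x$ and with respect to $c$, differentiate once more to reach the Hessian relation, and evaluate at $c=1$ using $u'(1)=p$ and $v'(1)=1/\ln a$. The only cosmetic difference is that the paper treats both cases at once via the combined form $f(cx)=u(c)f(x)+v(c)$ and obtains the Hessian identity by differentiating first in $c$ and then in $x$, whereas you differentiate in the opposite order; the two agree by equality of mixed partials.
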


\begin{proof}
Without loss of generality, we can represent a scale-invariant function $f$ as 
\begin{align}
f(cx) = u(c)f(x) + v(c) \label{eq:scale-inv-general}
\end{align}
since we can restore a multiplicatively or additively scale-invariant function by setting $v(c)=0$ or $u(c)=1$, respectively. By differentiating \eqref{eq:scale-inv-general} with respect to $x$, we have
\begin{equation*}
\nabla f(cx) = \frac{u(c)}{c} \nabla f(x).
\end{equation*}
On the other hand, by differentiating \eqref{eq:scale-inv-general} with respect to $c$, we have
\begin{align}
\nabla f(cx)^T x = u'(c) f(x) + v'(c). \label{eq:scale-inv-general-diff-c}
\end{align}
By differentiating \eqref{eq:scale-inv-general-diff-c} with respect to $x$, we obtain
\begin{align}
c \nabla^2 f(cx) x + \nabla f(cx) = u'(c) \nabla f(x). \label{eq:scale-inv-general-diff-c-x}
\end{align}
Plugging $c=1$ into \eqref{eq:scale-inv-general-diff-c} and \eqref{eq:scale-inv-general-diff-c-x} completes the proof.
\end{proof}

Proposition~\ref{prop:scale-inv} states that a scale invariant function satisfies $\nabla^2 f(x) = k\nabla f(x)$ holds for some $k$. This relation is interesting since using the first-order optimality conditions, we can derive an eigenvector property as follows.

\begin{proposition}
\label{prop:eigenvalue}
Suppose that $f$ is twice differentiable and let $(\lambda^*, x^*)$ be a stationary point of \eqref{prob:main} such that 
\begin{equation*}
\nabla f(x^*) = \lambda^* x^*.
\end{equation*}
If $f$ satisfies \eqref{def:multiplicative} with $u(c)=|c|^p$, then we have 
\begin{equation*}
\nabla^2 f(x^*) x^* = (p-1) \lambda^* x^*.
\end{equation*}
Also, if $f$ satisfies \eqref{def:additive} with $v(c)=\log_a |c|$, then we have 
\begin{equation*}
\nabla^2 f(x^*) x^* = - \lambda^* x^*.    
\end{equation*}
In both cases, $x^*$ is an eigenvector of $\nabla^2 f(x^*)$. Moreover, if $\lambda^*$ is greater than the largest eigenvalue of $\nabla^2 f(x^*)(I-x^* (x^*)^T)$, then $x^*$ is a local maximum to \eqref{prob:main}.
\end{proposition}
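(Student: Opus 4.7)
The first two equations are straightforward applications of Proposition~\ref{prop:scale-inv}. Specifically, in the multiplicative case, Proposition~\ref{prop:scale-inv} gives $\nabla^2 f(x^*) x^* = (p-1)\nabla f(x^*)$, into which I substitute the stationarity condition $\nabla f(x^*) = \lambda^* x^*$ to obtain $\nabla^2 f(x^*) x^* = (p-1)\lambda^* x^*$; the additive case is handled identically, using the relation $\nabla^2 f(x) x = -\nabla f(x)$ from Proposition~\ref{prop:scale-inv}. In both cases, $x^*$ is therefore an eigenvector of $\nabla^2 f(x^*)$, corresponding to the eigenvalue $(p-1)\lambda^*$ or $-\lambda^*$, respectively.

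For the local-maximum claim, my plan is to invoke the second-order sufficient condition for the equality-constrained problem \eqref{prob:main} with the single constraint $g(x) = \tfrac{1}{2}(x^Tx - 1) = 0$. Writing the Lagrangian as $L(x,\lambda) = f(x) - \lambda \cdot g(x)$, stationarity reads $\nabla f(x^*) = \lambda^* x^*$, matching the hypothesis. The tangent space to $\partial \mathcal{B}_d$ at $x^*$ is $T_{x^*} = \{v \in \mathbb{R}^d : v^T x^* = 0\}$, and the standard SOSC states that $x^*$ is a strict local maximum provided $v^T \nabla^2_{xx} L(x^*,\lambda^*) v = v^T(\nabla^2 f(x^*) - \lambda^* I)v < 0$ for every nonzero $v \in T_{x^*}$.

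The main observation to close the argument is that, once we have shown $x^*$ is an eigenvector of $\nabla^2 f(x^*)$, the matrix $H := \nabla^2 f(x^*)(I - x^*(x^*)^T)$ is actually symmetric: the projector $P = I - x^*(x^*)^T$ commutes with $\nabla^2 f(x^*)$ because $\nabla^2 f(x^*)\, x^*(x^*)^T = x^*(x^*)^T \nabla^2 f(x^*)$ follows from $\nabla^2 f(x^*) x^* \in \mathrm{span}(x^*)$ together with symmetry of $\nabla^2 f(x^*)$. Hence $H = P \nabla^2 f(x^*) P$, so its ``largest eigenvalue'' is unambiguous. For any $v \in T_{x^*}$ we have $Pv = v$, so
\[
v^T \nabla^2 f(x^*) v \;=\; v^T P \nabla^2 f(x^*) P v \;\le\; \lambda_{\max}(H)\,\|v\|^2 \;<\; \lambda^* \|v\|^2,
\]
which is exactly the SOSC $v^T(\nabla^2 f(x^*) - \lambda^* I)v < 0$, and therefore $x^*$ is a local maximum of \eqref{prob:main}.

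The only subtle step is the symmetry/commutativity argument in the last paragraph; without it, ``largest eigenvalue of $\nabla^2 f(x^*)(I-x^*(x^*)^T)$'' would not obviously coincide with the maximum Rayleigh quotient over $T_{x^*}$, and the passage from the hypothesis to the SOSC would not be immediate. Everything else is a direct substitution or a textbook application of the constrained second-order conditions.
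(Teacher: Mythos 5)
Your proof is correct and follows essentially the same route as the paper: derive the eigenvector identity by substituting the stationarity condition into Proposition~\ref{prop:scale-inv}, then verify the second-order sufficient condition for the Lagrangian on the tangent space $\{v : v^Tx^*=0\}$. The only difference is that you explicitly justify why $\lambda_{\max}\bigl(\nabla^2 f(x^*)(I-x^*(x^*)^T)\bigr)$ bounds the Rayleigh quotient over the tangent space (via the commutation of the projector with the Hessian), a point the paper's proof uses implicitly; this is a welcome clarification, not a departure.
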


\begin{proof}
Consider the Lagrangian function
\begin{align*}
L(x,\lambda) = f(x) + \frac{\lambda}{2} \left( 1-\|x\|^2 \right)
\end{align*}
and a stationary point $(\lambda^*, x^*)$ satisfying
\begin{align*}
\nabla f(x^*) = \lambda^* x^*, \quad \|x^*\| = 1.
\end{align*}
If $f$ is multiplicative scale invariant with the degree of $p$, by Proposition~\ref{prop:scale-inv}, we have
\begin{align*}
\nabla^2 f(x^*)x^* = (p-1) \nabla f(x^*) = (p-1) \lambda^* x^*.
\end{align*}
Also, by Proposition~\ref{prop:scale-inv}, if $f$ is additive scale invariant $f$, we have
\begin{align*}
\nabla^2 f(x^*)x^* = - \nabla f(x^*) = - \lambda^* x^*.
\end{align*}
Therefore, in both cases, a stationary point $x^*$ is an eigenvector of $\nabla^2 f(x^*)$.

Suppose that $\lambda^*$ is greater than the largest eigenvalue of $\nabla^2 f(x^*)(I-x^*(x^*)^T)$. 
For any $d$ satisfying $d^Tx^* = 0$, we have
\begin{equation*}
d^T \nabla_{xx}^2 L(x^*,\lambda^*) d = d^T \nabla^2 f(x^*)(I-x^*(x^*)^T) d - \lambda^* \| d\|^2 < 0.
\end{equation*}
Since the second-order sufficient condition is satisfied, $x^*$ is a local maximum.
\end{proof}

Proposition~\ref{prop:eigenvalue} states that a stationary point $x^*$ is an eigenvector of $\nabla^2 f(x^*)$. Note that the Lagrange multiplier $\lambda^*$ is not necessarily an eigenvalue corresponding to $x^*$. The eigenvalue corresponding to $x^*$ is $(p-1) \lambda^*$ if $f$ is multiplicatively scale invariant or $- \lambda^*$ if $f$ is additively scale invariant.
The sufficient condition for local optimality requires that the Lagrange multiplier $\lambda^*$ rather than the eigenvalue corresponding to $x^*$ is greater than the largest eigenvalue of $\nabla^2 f(x^*)(I-x^*(x^*)^T)$. Due to this eigenvector property, scale invariant problems can be considered as a generalization of the leading eigenvector problem. Next, we introduce a dual formulation of scale invariant problems.

\begin{proposition} \label{thm:reformulation}
Suppose that a continuous function $f$ is either multiplicatively scale invariant such that $f(x^*) > 0$ or additively scale invariant with an additive factor $u(c) = \log_a |c|$ with $a>1$. Then, solving \eqref{prob:main} is equivalent to solving the following optimization problem
\begin{equation} \label{prob:dual}
\textrm{\rm minimize} \quad \|w\| \quad \textrm{\rm subject to} \quad f(w) = 1.
\end{equation}
In other words, if $x^*$ is an optimal solution to \eqref{prob:main}, then $w^* = x^*/f(x^*)^{1/p}$ (multiplicative) or $w^* = a^{1-f(x^*)} x^*$ (additive) is an optimal solution to \eqref{prob:dual}. Conversely, if $w^*$ is an optimal solution to \eqref{prob:dual}, $x^*=w^*/\|w^*\|$ is an optimal solution to \eqref{prob:main}.
\end{proposition}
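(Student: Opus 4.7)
The plan is to handle the multiplicative and additive cases in parallel, since both rest on the same geometric idea: the scale invariance identity provides an explicit bijection between an appropriate subset of the unit sphere and the level set $\{w : f(w)=1\}$, under which the two objective values become strictly monotone functions of each other.

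For the multiplicative case, I would first verify that the forward map $x \mapsto w = x / f(x)^{1/p}$ sends every sphere point with $f(x) > 0$ to a dual-feasible point: applying $f(cx) = |c|^p f(x)$ with $c = f(x)^{-1/p}$ yields $f(w) = 1$. At such a point the dual objective equals $\|w\| = f(x)^{-1/p}$, which is strictly decreasing in $f(x)$. Conversely, the backward map $w \mapsto x = w / \|w\|$ sends every $w$ with $f(w) = 1$ to a sphere point, and the same identity gives $f(x) = 1 / \|w\|^p$. A short computation shows the two maps are inverses on the relevant sets, so maximization of $f$ over the sphere and minimization of $\|\cdot\|$ over $\{f(w) = 1\}$ are equivalent and the optimizers correspond as claimed.

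For the additive case with $v(c) = \log_a |c|$ and $a > 1$, I would carry out the analogous computation. The forward map $x \mapsto w = a^{1 - f(x)} x$ yields $f(w) = f(x) + \log_a(a^{1 - f(x)}) = 1$ by \eqref{def:additive}, and $\|w\| = a^{1 - f(x)}$, which is strictly decreasing in $f(x)$ because $a > 1$. The backward map $w \mapsto x = w / \|w\|$ returns $f(x) = f(w) - \log_a \|w\| = 1 - \log_a \|w\|$. This again exhibits the two problems as monotone reparametrizations of each other through an explicit bijection, from which the claimed correspondence of optima follows at once.

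The main, and rather mild, obstacle is the well-definedness of the forward map in the multiplicative case, which requires $f > 0$ at any candidate maximizer. The hypothesis $f(x^*) > 0$ resolves this: the maximum value of $f$ on the sphere is at least $f(x^*) > 0$, so one may restrict the maximization to the (open) set where $f > 0$ without changing the optimal solution, and on that set the forward map is defined everywhere. The remaining work is a sequence of direct substitutions to confirm that $\|x^*\| = 1$ and $f(w^*) = 1$ are preserved under the respective maps, which I would write out as brief chains of equalities rather than as separate lemmas.
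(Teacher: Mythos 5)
Your proposal is correct and rests on exactly the same ingredients as the paper's proof: the maps $x \mapsto x/f(x)^{1/p}$ (resp.\ $x \mapsto a^{1-f(x)}x$) and $w \mapsto w/\|w\|$, together with the identities $f(w/\|w\|)=\|w\|^{-p}$ and $f(w/\|w\|)=1-\log_a\|w\|$ that make the two objectives monotone in one another. The only difference is presentational --- you state the bijection and monotonicity directly, whereas the paper packages the same computations as two proofs by contradiction --- so this is essentially the paper's argument.
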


\begin{proof}
First, we consider the case where an objective function $f$ is multiplicative scale invariant with a multiplicative factor $u(c)=|c|^p$ where $p > 0$. Let $w^*$ be an optimal solution to \eqref{prob:dual}. From that $f(w^*)=1$, we have $w^* \neq 0$, which leads to 
$\|w^*\|>0$ and $f \left( {w^*}/{\|w^*\|} \right) = {1}/{\|w^*\|^p} > 0$.
Suppose an optimal solution to \eqref{prob:main} is $y$ with 
\begin{align}
\label{proof:reformulation-1}
f(y) > f \left( {w^*}/{\|w^*\|} \right) > 0.
\end{align}
Let $\hat{y} = {y}/{f(y)^{1/{p}}}$. Then, we have $f(\hat{y}) = 1$ and ${y} = {\hat{y}}/{\|\hat{y}\|}$.
Using $f(\hat{y}) = f(w^*) = 1$, we have
\begin{align}
\label{proof:reformulation-2}
f(y) = f\left( \frac{\hat{y}}{\|\hat{y}\|} \right) = \frac{1}{\|{\hat{y}}\|^{1/p}}, \quad f \left( \frac{{w}^*}{\|{w}^*\|} \right) =  \frac{1}{\|{w}^*\|^{1/p}}.
\end{align}
From \eqref{proof:reformulation-1} and \eqref{proof:reformulation-2}, we obtain $\|\hat{y}\| < \|{w}^*\|$, which contradicts that ${w}^{*}$ is an optimal solution to \eqref{prob:dual}.

On the other hand, let $x^*$ be an optimal solution to \eqref{prob:main} with $f(x^*)>0$. Suppose that an optimal solution to \eqref{prob:dual} is $z$ with
\begin{align}
\label{proof:reformulation-3}
\| z \| < \| {x^*} / {f(x^*)^{1/p}} \|.
\end{align}
Let $\hat{z} = {z}/{\|z\|}$. Then, we have 
$\| \hat{z} \| = 1$ and $z = {\hat{z}}/{f(\hat{z})^{1/p}}$.
From that $\|\hat{z}\| = \|x^*\| =1$, we have
\begin{align}
\label{proof:reformulation-4}
\| z \| = \| {\hat{z}}/{f(\hat{z})^{1/p}} \| = {1}/{f(\hat{z})^{1/p}}, \quad \| {x^*}/{f(x^*)^{1/p}} \|  = {1}/{f(x^*)^{1/p}}.
\end{align}
From \eqref{proof:reformulation-3} and \eqref{proof:reformulation-4}, we have
\begin{align*}
f(x^*) < f(\hat{z})
\end{align*}
since $p>0$, which contradicts the assumption that $x^*$ is an optimal solution to \eqref{prob:main}.

Next, let $f$ be an additively scale invariant function with an additive factor $v(c)=\text{log}_a |c|$ with $a>1$. In the same way as above, let $w^*$ be an optimal solution to \eqref{prob:dual} and suppose that an optimal solution of \eqref{prob:main} is $y$ with
\begin{align}
\label{proof:reformulation-5}
f(y) > f\left( {w^*}/{\|w^*\|} \right).  
\end{align}
Let $\hat{y} = a^{1-f(y)} y$. Then, we have $f(\hat{y}) = 1$ an ${y} = {\hat{y}}/{\|\hat{y}\|}$. 
Since $f(\hat{y})=f(w^*)=1$, we have
\begin{align}
\label{proof:reformulation-6}
f(y) = f(\hat{y}) - \text{log}_a \|\hat{y}\| = 1 - \text{log}_a \|\hat{y}\|, \quad f \left({{w}^*}/{\|{w}^*\|} \right) = 1 - \text{log}_a \|{w}^*\|.
\end{align}
From \eqref{proof:reformulation-5} and \eqref{proof:reformulation-6}, we have 
\begin{align*}
\|\hat{y}\| < \|{w}^*\|
\end{align*}
due to $a>1$, contradicting the fact that ${w}^{*}$ is an optimal solution to \eqref{prob:dual}. 

Conversely, let $x^*$ be an optimal solution to \eqref{prob:main} and suppose that an optimal solution to \eqref{prob:dual} is $z$ with
\begin{align}
\label{proof:reformulation-7}
\| z \| < \| a^{1-f(x^*)} x^* \|.
\end{align}
Let $\hat{z} = {z}/{\|z\|}$. Then, we have $\| \hat{z} \| = 1$ and $z = a^{1-f(\hat{z})} \hat{z}$. Using $\|\hat{z}\| = \|x^*\| = 1$, we have
\begin{align}
\label{proof:reformulation-8}
\| z \|  = a^{1-f(\hat{z})}, \quad \| a^{1-f(x^*)} x^* \|  = a^{1-f(x^*)}.
\end{align}
From \eqref{proof:reformulation-7} and \eqref{proof:reformulation-8}, we have
\begin{align*}
f(x^*) < f(\hat{z})
\end{align*}
due to $a>1$, contradicting the assumption that $x^*$ is an optimal solution to \eqref{prob:main}.
\end{proof}

Note that a dual reformulation for a multiplicatively scale invariant $f$ with $f(x^*) < 0$ or an additively scale invariant $f$ with $0<a<1$ can be obtained by replacing $f(w) = 1$ with $f(w) = -1$ in \eqref{prob:dual}. The dual formulation \eqref{prob:dual} has a nice geometric interpretation that an optimal solution $w^*$ is the closest point to the origin from $\{w:f(w)=1\}$. We use this understanding to derive SCI-PI in Section~\ref{sec:generalized-power-method}.


Lastly, we introduce two well-known examples of scale invariant problems in machine learning and statistics.
\begin{example}[$L_p$-norm Kernel PCA] \label{eg:pca}
Given data vectors $a_i \in \mathbb{R}^d$ and a mapping $\Phi$, $L_p$-norm PCA considers
\baa \label{eqn:lp-norm-pca}
\textrm{\rm maximize} \quad \frac{1}{n} {\textstyle \sum_{i=1}^n} \lVert{\Phi(a_i)^T x}\rVert_p^p \quad \textrm{\rm subject to} \quad x \in \partial \mathcal{B}_{d}
\eaa
where the objective function satisfies property \eqref{def:multiplicative} with $u(c) = |c|^p$. 
\end{example}
\begin{example}[Estimation of Mixture Proportions]
\label{eg:mixture}
Given a design matrix $L \in \mathbb{R}^{n \times d}$ satisfying $L_{jk} \geq 0$, the problem of estimating mixture proportions seeks to find a vector $\pi$ of mixture proportions on the probability simplex  $\mathcal{S}^d =
  \big\{ \pi : {\textstyle \sum_{k=1}^d \pi_k = 1},\ \pi \geq 0\big\}$ that maximizes the log-likelihood ${\textstyle \sum_{j=1}^n} \log \left({\textstyle \sum_{k=1}^d} L_{jk} \pi_k \right)$. 
By reparametrizing $\pi_k$ by $x_k^2$, we obtain an equivalent optimization problem
\baa \label{eqn:mixture-reform}
\textrm{\rm maximize}\quad \frac{1}{n} {\textstyle \sum_{j=1}^n} \log
                        \left({\textstyle \sum_{k=1}^d}L_{jk}x_k^2 \right) \quad \textrm{\rm subject to} \quad x \in \partial \mathcal{B}_{d},
\eaa
which now satisfies property \eqref{def:additive} with $v(c) = 2\log |c|$.
\end{example}
The reformulation idea in Example~\ref{eg:mixture} implies that any simplex-constrained problem with scale invariant $f$ can be reformulated to a scale invariant problem.

\section{Scale Invariant Power Iteration}
\label{sec:generalized-power-method}
In this section, we provide a geometric derivation of SCI-PI to find a local optimal solution of \eqref{prob:main}. The algorithm is developed using the geometric interpretation of the dual formulation \eqref{prob:dual} as illustrated in Figure \ref{fig:idea}. Starting with an iterate $x_k \in \partial \mathcal{B}$, we obtain a dual iterate $w_k$ by projecting $x_k$ to the constraint $f(w)=1$. Given $w_k$, we identify the hyperplane $h_{k}$ which the current iterate $w_{k}$ lies on and is tangent to $f(w)=1$. After identifying the equation of $h_{k}$, we find the closest point $z_k$ to the origin from $h_{k}$ and obtain a new dual iterate $w_{k+1}$ by projecting $z_k$ to the constraint $f(w)=1$. Finally, we obtain a new primal iterate $x_{k+1}$ by mapping $w_{k+1}$ back to the set $\partial \mathcal{B}_d$.

Now, we develop an algorithm based on the above idea. For derivation of the algorithm, we assume that an objective function $f$ is continuous and satisfies either \eqref{def:multiplicative} with $u(c)=|c|^p$ where $p>0$ and $f(x)>0$ for all $x \in \partial \mathcal{B}$ or \eqref{def:additive} with $v(c)=\text{log}_a |c|$ where $1<a$. Under these conditions, a scalar mapping from $x_k$ to $w_k$ can be well defined as $w_k = x_k/f(x_k)^{1/p}$ or $w_k = a^{1-f(x_k)} x_k$, respectively. Let $w_k = c_k x_k$. Since $w_k$ is on the constraint $f(w)=1$, the tangent vector of the hyperplane $h_k$ is $\nabla f(w_k)$. Therefore, we can write down the equation of the hyperplane $h_k$ as
$\left \{ w: \nabla f(w_k)^T (w-w_k)=0 \right \}$.
Note that $z_k$ is a scalar multiple of $\nabla f(w_k)$ where the scalar can be determined from the requirement that $z_k$ is on $h_k$. Since $w_{k+1}$ is the projection of $z_k$, it must be a scalar multiple of the tangent vector $y_k = \nabla f(w_k)$. Therefore, we can write $w_{k+1}$ as $w_{k+1}=d_k y_k$. Finally, by projecting $w_{k+1}$ to $\partial \mathcal{B}$, we obtain
\begin{align*}
x_{k+1} = \frac{w_{k+1}}{\|w_{k+1}\|} = \frac{d_{k} y_{k}}{\|d_{k} y_{k}\|} = \frac{y_{k}}{\|y_{k}\|} = \frac{\nabla f(w_k)}{\| \nabla f(w_k) \|} = \frac{\nabla f(c_k x_k)}{\| \nabla f(c_k x_k) \|} = \frac{\nabla f(x_k)}{\| \nabla f(x_k) \|}
\end{align*}
where the last equality follows from Proposition~\ref{prop:scale-inv}. Summarizing all the above, we obtain SCI-PI presented in Algorithm \ref{alg:SCI-PI}.

\begin{minipage}{0.45 \textwidth}
\centering
\vspace{0.1in}
\begin{tikzpicture}[scale=1, every node/.style={scale=0.8}]
\filldraw (8-7+1, 0) circle [radius=1pt] node[font=\fontsize{10}{10}, above=0.05] {$(0,0)$};
\filldraw (1+1.4142+1, 0-1.4142) circle [radius=1pt] node[font=\fontsize{10}{10}, right=0.1] {$x_k$};
\filldraw (8+0.8944-7+1, -0.8944) circle [radius=1pt] node[font=\fontsize{10}{10}, above=0.2] at (8+0.8944+0.1-7+1,-0.8944-0.2) {$w_k$};
\draw (6-7+1, -1.6180) node[font=\fontsize{10}{10}, left=0.1] {$h_k$} -- (11-7+1,-0.3680);
\filldraw (1+0.4851+1, 0-1.9403) circle [radius=1pt] node[font=\fontsize{10}{10}, below=0.05] {$x_{k+1}$};
\filldraw (8+0.2481-7+1, 0-0.9923) circle [radius=1pt] node[font=\fontsize{10}{10}] at (8+0.2481-0.4-7+1,0-0.9923+0.15) {$w_{k+1}$};
\filldraw (8+0.2631-7+1, 0-1.0523) circle [radius=1pt] node[font=\fontsize{10}{10}] at (8+0.2631+0.2-7+1,0-1.0523-0.2) {$z_{k}$};
\draw (1+1,0) circle [radius = 2] node[font=\fontsize{10}{10}, above=2.0] {$\partial \mathcal{B}_d$};
\draw (8-7+1,0) ellipse [x radius = 2, y radius = 1] node[font=\fontsize{10}{10}, above=1] {$f(w) = 1$}; \draw [->,scale=1] (1+1.4142+1, 0-1.4142) to (8+0.8944-7+1, -0.8944);
\draw [->,scale=1] (8+0.2481-7+1, 0-0.9923) to (1+0.4851+1, 0-1.9403);
\draw (8+0.2631-7+1, 0-1.0523) -- (8-7+1, 0);
\draw (8+0.2-7+1,-0.8) -- (8+0.4043-7+1, -0.7489) -- (8+0.2631+0.4043-0.2-7+1, 0-1.0523+0.8-0.7489);
\end{tikzpicture}

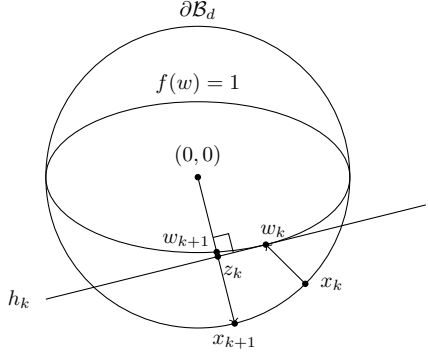
\captionof{figure}{Geometric derivation of SCI-PI}
\label{fig:idea}
\vspace{0.1in}
\end{minipage}
\hfill 
\begin{minipage}{0.45 \textwidth}
\vspace{0.1in}
\begin{algorithm}[H]
\caption{SCI-PI}
\label{alg:SCI-PI}
\begin{algorithmic}
   \STATE {\bfseries Input:} initial point $x_0$ \\
   \FOR{$k=0,1,\ldots,T-1$}
   \STATE $x_{k+1} \leftarrow \dfrac{\nabla f(x_k)}{\| \nabla f(x_k) \|}$ \\
   \ENDFOR
   \STATE {\bfseries Output:} $x_{T}$
\end{algorithmic}
\end{algorithm}
\vspace{0.1in}
\end{minipage}

Next, we provide a convergence analysis of SCI-PI. 

Global sublinear convergence of SCI-PI for convex $f$ has been addressed in \cite{journee2010generalized}. We additionally show that SCI-PI yields an ascent step even for quasi-convex $f$.
\begin{proposition} \label{prop:ascent-quasi-convex}
If $f$ is quasi-convex and differentiable, a sequence of iterates $\{x_k\}_{k=0,1,\cdots}$ generated by SCI-PI satisfies
$f(x_{k+1}) \geq f(x_{k})$ for $k = 0,1,\cdots$.
\end{proposition}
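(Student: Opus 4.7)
The plan is to compare $x_{k+1}$ to $x_k$ via the gradient $\nabla f(x_k)$ and invoke the first-order characterization of quasi-convexity: a differentiable $f$ is quasi-convex if and only if for all $x,y$, $f(y)\le f(x)$ implies $\nabla f(x)^{T}(y-x)\le 0$, equivalently $\nabla f(x)^{T}(y-x)>0$ implies $f(y)>f(x)$. So the entire task reduces to showing that the inner product $\nabla f(x_k)^{T}(x_{k+1}-x_k)$ is non-negative, and analyzing the equality case separately.

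First I would assume, as is implicit in the algorithm being well-defined, that $\nabla f(x_k)\neq 0$ for every $k$, so that the update $x_{k+1}=\nabla f(x_k)/\|\nabla f(x_k)\|$ makes sense. Then I would compute directly
\[
\nabla f(x_k)^{T} x_{k+1}
= \nabla f(x_k)^{T}\frac{\nabla f(x_k)}{\|\nabla f(x_k)\|}
= \|\nabla f(x_k)\|,
\]
while from $\|x_k\|=1$ and Cauchy--Schwarz, $\nabla f(x_k)^{T} x_k \le \|\nabla f(x_k)\|\cdot\|x_k\|=\|\nabla f(x_k)\|$. Subtracting,
\[
\nabla f(x_k)^{T}(x_{k+1}-x_k) = \|\nabla f(x_k)\| - \nabla f(x_k)^{T}x_k \;\ge\; 0.
\]

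Next I would split on whether this is strict. If $\nabla f(x_k)^{T}(x_{k+1}-x_k)>0$, then the first-order quasi-convex characterization immediately gives $f(x_{k+1})>f(x_k)$, which is stronger than what is claimed. If instead $\nabla f(x_k)^{T}(x_{k+1}-x_k)=0$, then Cauchy--Schwarz holds with equality, so $\nabla f(x_k)$ is a non-negative scalar multiple of $x_k$ (the sign being forced by $\nabla f(x_k)^{T}x_k=\|\nabla f(x_k)\|\ge 0$); writing $\nabla f(x_k)=\alpha x_k$ with $\alpha=\|\nabla f(x_k)\|>0$ and substituting into the update rule yields $x_{k+1}=x_k$, hence $f(x_{k+1})=f(x_k)$. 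In both cases $f(x_{k+1})\ge f(x_k)$, finishing the induction over $k$.

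The argument is short and the only subtlety is applying the quasi-convex gradient inequality in the correct direction and handling the Cauchy--Schwarz equality case; there is no real obstacle here, since the result is essentially a one-line observation once one recognizes that the SCI-PI update picks the unit vector that maximizes inner product with $\nabla f(x_k)$, so $\nabla f(x_k)^{T}(x_{k+1}-x_k)\ge 0$ holds automatically.
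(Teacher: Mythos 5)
Your proof is correct and follows essentially the same route as the paper's: both hinge on the first-order characterization of quasi-convexity together with the identity $\nabla f(x_k)^T(x_{k+1}-x_k)=\|\nabla f(x_k)\|-\nabla f(x_k)^Tx_k\ge 0$ and its Cauchy--Schwarz equality case. The only difference is presentational --- you argue directly with a case split on strict versus non-strict inequality, while the paper runs the same two facts as a proof by contradiction.
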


\begin{proof}
If $f(x_{k+1}) < f(x_k)$, by the first-order condition of differentiable quasi-convex functions, we have
\begin{align}
\label{proof:first-order-quasi-convex}
\nabla f(x_k)^T(x_{k+1}-x_k) = \nabla f(x_k)^T \bigg(\frac{\nabla f (x_k)}{\| \nabla f(x_k) \|} - x_k \bigg) = \| \nabla f(x_k) \| - \nabla f(x_k)^T x_k \leq 0.
\end{align}
However, since $f(x_{k+1}) \neq f(x_k)$, $\nabla f(x_k)$ is not a scalar multiple of $x_k$, leading to
\begin{equation*}
\| \nabla f(x_k) \| - \nabla f(x_k)^T x_k > 0.
\end{equation*}
This contradicts \eqref{proof:first-order-quasi-convex}. Therefore, we should have $f(x_{k+1}) \geq f(x_k)$.
\end{proof}

If $f$ is quasi-convex, the set $\{ w : f(w) \leq 1 \}$ is convex, therefore, from Figure~\ref{fig:idea}, we can expect that SCI-PI would yield an ascent step. If $f$ is not quasi-convex, $\{ f(x_k) \}_{k=0,1,\cdots}$ is not necessarily increasing, making it hard to analyze global convergence. Assuming that an initial point $x_0$ is close to a local maximum $x^*$, we study local convergence of SCI-PI as follows.

\begin{theorem} \label{thm:GPM-local-convergence}
Let $f$ be a scale invariant, twice continuously differentiable function on an open set containing $\partial \mathcal{B}_d$ and let $x^*$ be a local maximum satisfying $\nabla f(x^*) = {\lambda^*} x^*$ and $\lambda^* > \bar{\lambda}_2 = {\textstyle \max_{2 \leq i \leq d}} |\lambda_i|$
where $(\lambda_i,v_i)$ is an eigen-pair of $\nabla^2 f(x^*)$ with $x^* = v_1$. Then, there exists some $\delta > 0$ such that under the initial condition $1 - x_0^Tx^* < \delta$, the sequence of iterates $\{x_k\}_{k=0,1,\cdots}$ generated by SCI-PI satisfies
\begin{align*}
1 - (x_k^Tx^*)^2
\leq 
\prod_{t=0}^{k-1} \bigg( \frac{\bar{\lambda}_2}{\lambda^*} + \gamma_t \bigg)^{2} \left( 1 - (x_0^Tx^*)^2 \right), \, \,
\end{align*}
where
\begin{align*}
\frac{\bar{\lambda}_2}{\lambda^*} + \gamma_t < 1  \, \, \textup{for all } t \geq 0 \, \, \textup{and} \, \, \lim_{k \rightarrow \infty} \gamma_k = 0.
\end{align*}
Moreover, if $\nabla_i f = {\partial f}/{\partial x_i}$ has a continuous Hessian $H_i$ on an open set containing $\mathcal{B}_{d,\infty} \triangleq \{ x \in \mathbb{R}^d : \| x \|_{\infty} \leq 1 \}$, we can explicitly write $\delta$ as
\begin{equation*}
\delta(\lambda^*, \bar{\lambda}_1, \bar{\lambda}_2, M) = 
\min \left \{ 
\left( \frac{\lambda^*}{\bar{\lambda}_1+M} \right)^2 , \, 
\left( 
\frac{\lambda^*-\bar{\lambda}_2}{\bar{\lambda}_1 + 2M} \right)^2, 1
\right \}
\end{equation*}
where $\bar{\lambda}_1 = |\lambda_1|$ and
\begin{equation*}
M = \hspace{-2mm} \max_{x \in \partial \mathcal{B}_d, \\ y^1, \cdots, y^d \in \mathcal{B}_{d,\infty}} \hspace{-2mm} \sqrt{\textstyle \sum_{i=1}^d ( x^T G_i(y^1,\cdots,y^d) x )^2}, \quad G_i(y^1,\cdots,y^d) = {\textstyle \sum_{j=1}^d v_{i,j} H_j(y^j)}.
\end{equation*}
\end{theorem}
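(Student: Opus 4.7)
The proof hinges on a one-step contraction $1 - (x_{k+1}^T x^*)^2 \le (\bar{\lambda}_2/\lambda^* + \gamma_k)^2 (1 - (x_k^T x^*)^2)$ with $\gamma_k \to 0$, which then iterates trivially to give the product bound. The starting point is to decompose each iterate in the eigenbasis $\{v_1 = x^*, v_2, \ldots, v_d\}$ of $\nabla^2 f(x^*)$, which is an orthonormal basis because Proposition~\ref{prop:eigenvalue} ensures $x^*$ is itself an eigenvector. Writing $x_k = \alpha_k x^* + u_k$ with $u_k \perp x^*$, so that $\|u_k\|^2 = 1 - \alpha_k^2 = 1 - (x_k^T x^*)^2$, I would Taylor expand $\nabla f(x_k) = \nabla f(x^*) + \nabla^2 f(x^*)(x_k - x^*) + r_k = \lambda^* x^* + \lambda_1(\alpha_k - 1) x^* + \nabla^2 f(x^*) u_k + r_k$, using $\nabla f(x^*) = \lambda^* x^*$ and $\nabla^2 f(x^*) x^* = \lambda_1 x^*$. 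The perpendicular part of $\nabla^2 f(x^*) u_k$ then equals $\sum_{i \ge 2} \lambda_i (v_i^T u_k) v_i$ and has norm at most $\bar{\lambda}_2 \|u_k\|$.

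From the update rule and the Pythagorean identity for unit vectors, $1 - (x_{k+1}^T x^*)^2 = \|\nabla f(x_k) - (\nabla f(x_k)^T x^*) x^*\|^2 / \|\nabla f(x_k)\|^2$, so I would bound the numerator above by $(\bar{\lambda}_2 \|u_k\| + \|r_k\|)^2$ and the denominator below by $(\nabla f(x_k)^T x^*)^2 \ge (\lambda^* - \bar{\lambda}_1 (1 - \alpha_k) - \|r_k\|)^2$. Because $f$ is twice continuously differentiable, $\|r_k\| = o(\|x_k - x^*\|)$, so after dividing by $\|u_k\|^2$ the ratio becomes $\bar{\lambda}_2/\lambda^* + \gamma_k$ with $\gamma_k \to 0$ as $x_k \to x^*$, yielding the claimed product bound.

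For the explicit formula for $\delta$, I would sharpen the remainder bound quantitatively. Applying the Lagrange form of Taylor's theorem to each component $\nabla_i f$ separately, the $i$-th coordinate of $r_k$ in the eigenbasis evaluates to $\tfrac{1}{2} (x_k - x^*)^T G_i(y_k^1, \ldots, y_k^d)(x_k - x^*)$ for some $y_k^j$ on the segment from $x^*$ to $x_k$, which lies in $\mathcal{B}_{d,\infty}$ since both endpoints do. The definition of $M$ then gives $\|r_k\| \le \tfrac{1}{2} M \|x_k - x^*\|^2 \le M \|u_k\|^2$ via $\|x_k - x^*\|^2 = 2(1 - \alpha_k) \le 2 \|u_k\|^2$ when $\alpha_k \in [0,1]$. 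Substituting into the recursion and imposing simultaneously (i) positivity of the denominator, which demands $\|u_k\|^2 < \lambda^*/(\bar{\lambda}_1 + M)$, and (ii) strict contraction $\bar{\lambda}_2/\lambda^* + \gamma_k < 1$, which after using $\|u_k\|^2 \le \|u_k\|$ reduces to $\|u_k\| < (\lambda^* - \bar{\lambda}_2)/(\bar{\lambda}_1 + 2M)$, produces the first two entries of the $\min$; the third entry $1$ enforces $\alpha_0 \ge 0$ so that the bounds $\|x_k - x^*\|^2 \le 2 \|u_k\|^2$ and $1 - \alpha_k \le \|u_k\|^2$ remain valid.

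The principal obstacle is the circular dependence between the one-step contraction and the requirement that iterates stay close to $x^*$. I would resolve this inductively: if $\delta$ is chosen so that both conditions above hold at $k = 0$, then $\|u_1\| < \|u_0\|$, the same conditions hold at $k = 1$ with an even smaller $\gamma_1$, and the argument propagates. A secondary subtlety worth stressing is that $\lambda_1$ is not $\lambda^*$; by Proposition~\ref{prop:eigenvalue} it equals $(p-1)\lambda^*$ or $-\lambda^*$, so the drift along the $x^*$ direction must be controlled through $\bar{\lambda}_1 = |\lambda_1|$ rather than $\lambda^*$, which is precisely why $\bar{\lambda}_1$ appears in the denominators of $\delta$.
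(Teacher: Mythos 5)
Your proposal follows essentially the same route as the paper's proof: expand $\nabla f(x_k)$ around $x^*$ in the eigenbasis of $\nabla^2 f(x^*)$, bound $1-(x_{k+1}^Tx^*)^2$ by the ratio of the squared orthogonal component of the gradient to the squared $v_1$-component, control the remainder via the Lagrange form of Taylor's theorem through the matrices $G_i$ and the constant $M$, and close the loop by induction --- the paper packages exactly these two steps into Lemma~\ref{lemma:SCI-PI-local-convergence-small-delta} and Lemma~\ref{lemma:SCI-PI-local-convergence-explicit-delta}, and you correctly isolate the same subtleties (that $x^*$ is an eigenvector with eigenvalue $\lambda_1\neq\lambda^*$, and that the entry $1$ in the $\min$ keeps $x_k^Tx^*>0$). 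The only quibble is quantitative: to recover the stated $\delta$ exactly you should bound the denominator using $1-\alpha_k$ directly, i.e.\ $\lambda^*-(\bar{\lambda}_1+M)(1-\alpha_k)>0$ whenever $1-\alpha_k<\min\{(\lambda^*/(\bar{\lambda}_1+M))^2,1\}$, rather than routing through $\|u_k\|^2\le 2(1-\alpha_k)$, which costs a factor of $2$ and would not literally reproduce the first entry of the $\min$.
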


\begin{proof}
Since $\nabla^2 f(x^*)$ is real and symmetric, without loss of generality, we assume that $\{ v_1, \ldots, v_d\}$ form an orthogonal basis in $\mathbb{R}^d$. 

Since $f$ is twice continuously differentiable on an open set containing $\partial \mathcal{B}_d$, for $x \in \partial \mathcal{B}_d$, using the Taylor expansion of $\nabla f(x)^Tv_i$ at $x^*$, we have
\begin{align}
\label{proof:SCI-PI-Taylor}
\nabla f(x)^Tv_i = \nabla f(x^*)^Tv_i + (x - x^*)^T\nabla^2 f(x^*) v_i + R_i(x)
\end{align}
where 
\begin{align}
\label{proof:SCI-PI-Taylor-remainder}
R_i(x) = o(\|x-x^*\|).
\end{align}
From $\nabla f(x^*) = \lambda^* x^*$ and $x^* = v_1$, we have
\begin{equation}
\label{proof:grad-x-product-v1}
\begin{aligned}
\nabla f(x)^T v_1 &= \nabla f(x^*)^Tx^* + (x - x^*)^T \nabla^2 f(x^*) x^*  + R_1(x) \\
&= \lambda^* - \lambda_1(1 - x^Tx^*)  + R_1(x) \\
&= \lambda^* + \alpha(x)
\end{aligned}
\end{equation}
where 
\begin{equation*}
\alpha(x) = - \lambda_1(1 - x^Tx^*) + R_1(x) = o(\|x-x^*\|)
\end{equation*}
due to $R_1(x) = o(\|x-x^*\|)$ and $1 - x^Tx^* = o(\|x-x^*\|)$.

On the other hand, for $2 \leq i \leq d$, due to $\nabla f(x^*) = \lambda^* x^*$, we have 
\begin{align}
\label{proof:grad-x-star-product-vi}
\nabla f(x^*)^T v_i = \lambda^* (x^*)^T v_i = 0.
\end{align}
From \eqref{proof:SCI-PI-Taylor}, this results in
\begin{align}
\label{proof:grad-x-product-vi}
\nabla f(x)^T v_i = \lambda_i x^Tv_i + R_i(x).
\end{align}
Let $\bar{R}_2 (x) = {\textstyle \max_{2 \leq i \leq d} |R_i(x)|}$. 
Note that $\bar{R}_2 (x) = o(\|x-x^*\|)$.
By \eqref{proof:grad-x-product-vi}, we obtain
\begin{equation}
\label{proof:SCI-PI-grad-x-product-vi-square-sum-R}
\begin{aligned}
\sum_{i=2}^d \left( \nabla f(x)^T v_i \right) ^2 &= \sum_{i=2}^d \left[ \lambda_i^2 (x^Tv_i)^2 + 2 \lambda_i (x^Tv_i) R_i(x) + \left( R_i(x) \right)^2 \right] \\
&\leq \bar{\lambda}_2^2 \sum_{i=2}^d (x^Tv_i)^2 + 2 \bar{\lambda}_2 \bar{R}_2 (x)  \sum_{i=2}^d  |x^Tv_i| + d \left( \bar{R}_2 (x) \right)^2.
\end{aligned}
\end{equation}
From $x \in \partial \mathcal{B}_d, \, x^* = v_1$, and the fact that $\{v_1,\ldots,v_d\}$ forms an orthogonal basis in $\mathbb{R}^d$, we have
\begin{align*}
\sum_{i=2}^d (x^Tv_i)^2 = 1 - (x^Tv_1)^2 = 1 - (x^Tx^*)^2 \leq 2(1-x^Tx^*) = \| x- x^*\|^2.
\end{align*}
Also, by the Cauchy Schwartz inequality, we have
\begin{equation*}
\sum_{i=2}^d  |x^Tv_i| \leq \sqrt{d} \sqrt{\sum_{i=2}^d (x^Tv_i)^2} \leq \sqrt{d} \| x- x^*\|.
\end{equation*}
Therefore, we obtain from \eqref{proof:SCI-PI-grad-x-product-vi-square-sum-R} that
\begin{equation}
\label{proof:grad-x-product-vi-square-sum}
\begin{aligned}
\sum_{i=2}^d \left( \nabla f(x)^T v_i \right) ^2 &\leq \bar{\lambda}_2^2 \| x- x^*\|^2 + 2 \bar{\lambda}_2 \bar{R}_2 (x) \sqrt{d} \| x- x^*\| + d \left( \bar{R}_2 (x) \right)^2 \\
&= \left( \bar{\lambda}_2 \| x- x^*\| + \beta(x) \right)^2
\end{aligned}
\end{equation}
where 
\begin{equation*}
\beta(x) = \sqrt{d} \bar{R}_2 (x) = o(\|x - x^* \|).
\end{equation*}
By \eqref{proof:grad-x-product-v1}, \eqref{proof:grad-x-product-vi-square-sum}, and Lemma~\ref{lemma:SCI-PI-local-convergence-small-delta}, we obtain the first part of the desired result.

Next, we consider the case where $\nabla_i f$ has a continuous Hessian $H_i$. From $\nabla_i f(x)$ being twice continuously differentiable in $\mathcal{B}_{\infty}$, we have
\begin{align}
\nabla_i f(x_k) = \nabla_i f(x^*) + \nabla \nabla_{i} f(x^*) (x_k - x^*) + \frac{1}{2} \left( x_k -x^* \right)^T H_i(\hat{x}_k^i) \left( x_k - x^* \right)
\label{eq:Taylor-gi}
\end{align}
where 
\begin{align*}
\hat{x}_{k}^{i} \in \mathcal{N} (x_k, x^*) \triangleq \left \{x : x_s = t_s x_s^* + ( 1-t_s) x_{k,s}, \, 0 \leq t_s \leq 1, \, s = 1, \ldots, d \right \}.
\end{align*}
In the above, $x_s^*$ and $x_{k,s}$ denote the $s^{th}$ coordinates of $x^*$ and $x_k$, respectively.

For each $1 \leq i \leq d$, we have
\begin{equation*}
\frac{1}{2}
\sum_{j=1}^d v_{i,j} \left( x_k -x^* \right)^T H_j(\hat{x}_{k}^{j}) \left( x_k - x^* \right)
= \frac{1}{2} (x_k-x^*)^T G_i(\hat{x}_{k}^{1},\cdots,\hat{x}_{k}^{d}) (x_k - x^*).
\end{equation*}
From
\begin{equation}
\begin{aligned}
\label{proof:SCI-PI-second-order-eq}
& \big | (x_k-x^*)^T G_i(\hat{x}_{k}^{1},\cdots,\hat{x}_{k}^{d}) (x_k - x^*) \big | \\
& \qquad \qquad = 
\| x_k - x^* \|^2 \bigg | \left[ \frac{x_k-x^*}{\|x_k-x^*\|} \right]^T G_i(\hat{x}_{k}^{1},\cdots,\hat{x}_{k}^{d}) \left[ \frac{x_k-x^*}{\|x_k-x^*\|} \right] \bigg|
\end{aligned}
\end{equation}
and
\begin{align*}
\max_{x \in \partial \mathcal{B}_d} \,\, |x^T G_i(\hat{x}_{k}^{1},\cdots,\hat{x}_{k}^{d}) x| 
\leq \max_{\substack{x \in \partial \mathcal{B}_d, \\ y^1, \cdots, y^d \in \mathcal{B}_{\infty}}} |x^T G_i({y}^{1},\cdots,{y}^{d}) x| \leq M,
\end{align*}
we have
\begin{align*}
\big | (x_k-x^*)^T G_i(\hat{x}_{k}^{1},\cdots,\hat{x}_{k}^{d})  (x_k - x^*) \big | \leq M \| x_k - x^* \|^2,
\end{align*}
leading to
\begin{align}
\frac{1}{2} \,
\big | \sum_{j=1}^d v_{i,j} \left( x_k -x^* \right)^T H_j(\hat{x}_{k}^{j}) \left( x_k - x^* \right) \big|
\leq \frac{1}{2} M \|x_k-x^*\|^2.
\label{proof:SCI-PI-second-order-bound}
\end{align}

From \eqref{eq:Taylor-gi}, \eqref{proof:SCI-PI-second-order-bound} and that $x^* = v_1$, we have
\begin{align*}
\begin{aligned}
\nabla f(x_k)^T v_1 &\geq \nabla f(x^*)^T x^* + (x_k - x^*)^T \nabla^2 f(x^*) x^* -  \frac{M}{2}  \| x_k - x^* \|^2,
\end{aligned}
\end{align*}
resulting in 
\begin{align}
\label{proof:product-v1-final}
\nabla f(x_k)^T v_1 &\geq  \lambda^* - (M + |\lambda_1|) (1 - x_k^Tx^*).
\end{align}

For $2 \leq i \leq d$, we have 
\begin{align} 
\nabla f(x_k)^T v_i &= \nabla f(x^*)^T v_i + (x_k - x^*)^T \nabla^2 f(x^*) v_i + \frac{1}{2} (x_k-x^*)^T G_i(\hat{x}_{k}^{1},\cdots,\hat{x}_{k}^{d}) (x_k - x^*) \nonumber \\
&= \lambda_i x_k^T v_i + \frac{1}{2} (x_k-x^*)^T G_i(\hat{x}_{k}^{1},\cdots,\hat{x}_{k}^{d}) (x_k - x^*). \label{proof:product-vi}
\end{align}
Using \eqref{proof:SCI-PI-second-order-eq} and 
\begin{align*}
\max_{x \in \partial \mathcal{B}_d} \,\, \sum_{i=2}^d (x^T G_i(\hat{x}_{k}^{1},\cdots,\hat{x}_{k}^{d}) x)^2 
\leq    
\max_{\substack{x \in \partial \mathcal{B}_d, \\ y^1, \cdots, y^d \in \mathcal{B}_{\infty}}} \sum_{i=2}^d (x^T G_i(y^{1},\cdots,y^{d}) x)^2 
\leq M,
\end{align*}
we have
\begin{align}
\label{proof:SCI-PI-second-order-sum-bound}
\sum_{i=2}^d \big[ (x_k-x^*)^T G_i(\hat{x}_{k}^{1},\cdots,\hat{x}_{k}^{d}) (x_k - x^*) \big]^2
\leq M^2 \|x_k-x^*\|^4.
\end{align}
Using \eqref{proof:product-vi}, \eqref{proof:SCI-PI-second-order-sum-bound} and the Cauchy-Schwartz inequality, we have
\begin{align}
\sum_{i=2}^d (\nabla f(x_k)^T v_i)^2 &\leq \sum_{i=2}^d \left( |\lambda_i| |x_k^Tv_i| + \frac{1}{2} (x_k-x^*)^T G_i(\hat{x}_{k}^{1},\cdots,\hat{x}_{k}^{d}) (x_k - x^*) \right)^2 \nonumber \\
&\leq \bar{\lambda}_2^2 \sum_{i=2}^d (x_k^Tv_i)^2 +  \bar{\lambda}_2 M  \| x_k - x^* \|^2 \sqrt{\sum_{i=2}^d (x_k^Tv_i)^2} + \frac{M^2}{4} \| x_k - x^* \|^4 \nonumber \\
&= \left( \bar{\lambda}_2 \sqrt{1 - (x_k^Tx^*)^2} + \frac{M}{2}  \| x_k - x^* \|^2 \right)^2.
\label{proof:rewrite2}
\end{align}
Using \eqref{proof:product-v1-final}, \eqref{proof:rewrite2}, and Lemma~\ref{lemma:SCI-PI-local-convergence-explicit-delta} with
$$
A = \lambda^*, \, B = M + |\lambda_1|, \, C = 0, \, D = \bar{\lambda}_2, \, E =0, \, F = M,
$$
we obtain the desired result.
\end{proof}

Theorem~\ref{thm:GPM-local-convergence} presents a local convergence result of SCI-PI with the rate being $\lambda^*/\bar{\lambda}_2$. For the leading eigenvector problem, this rate specializes to $\lambda_1/\lambda_2$, generalizing the convergence rate of power iteration. 
Note that Theorem~\ref{thm:GPM-local-convergence} requires that a Lagrange multiplier $\lambda^*$ corresponding to a local maximum $x^*$ satisfies $\lambda^* > \bar{\lambda}_2 = \max_{2 \leq i \leq d} |{\lambda}_i|$. This assumption is satisfied by all local maxima if $f$ is convex, multiplicatively scale invariant or concave, additively scale invariant. However, in general, not all local maxima satisfy this assumption since it is stronger than the local optimality condition stated as $\lambda^* > \max_{2 \leq i \leq d} {\lambda}_i$. Nevertheless, by adding $\sigma \|x\|^2$ for some $\sigma>0$ to the objective function $f$, we can always enforce $\lambda^* > \bar{\lambda}_2$.
Conversely, by adding $\sigma \|x\|^2$ for some $\sigma < 0$, we may improve the convergence rate as in shifted power iteration.

\section{Extended Settings}
\label{sec:extended-settings}
\subsection{Sum of Scale Invariant Functions} \label{subsec:sumofscaleinv}
Consider a sum of scale invariant functions having the form of 
$f(x) = \sum_{i=1}^m g_i(x) + \sum_{j=1}^n h_j(x)$ where $g_i$ is a multiplicatively scale invariant function with $u(c)=|c|^{p_i}$ and $h_j$ is an additively scale invariant function with $v(c) = \log_{a_j} |c|$. Note that this does not imply that $f$ is scale invariant in general. Here is an example that involves a sum of scale invariant functions.
\begin{example}[Kurtosis-based ICA]
Given a pre-processed data matrix $W \in \mathbb{R}^{n\times d}$, Kurtosis-based ICA \citep{hyvarinen2000independent} solves
\baa \label{eqn:ica}
\textrm{\rm maximize} \quad \frac{1}{n} \sum_{i=1}^n \left[(w_i^T x)^4 - 3\right]^2 \quad \textrm{\rm subject to} \quad x \in \partial \mathcal{B}_{d}.
\eaa
The objective function $f$ is a sum of scale invariant functions.
\end{example}

By Proposition~\ref{prop:scale-inv}, the gradient of $f$ has the form of
\begin{align*}
\nabla f(x) = \sum_{i=1}^{m} \nabla g_i(x) + \sum_{j=1}^{n} \nabla h_j(x) = 
F(x) x,
\end{align*}
where
\begin{equation*}
F(x) = \sum_{i=1}^{m} \left( \frac{1}{p_i-1} \right) \nabla^2 g_i(x) 
- \sum_{j=1}^{n} \nabla^2 h_j(x).
\end{equation*}
Note that a stationary point $x^*$ satisfying $\nabla f(x^*) = \lambda^* x^*$ is not necessarily an eigenvector of $\nabla^2 f(x^*)$. Instead, a stationary point $x^*$ is an eigenvector of $F(x)$. We present a local convergence analysis of SCI-PI for a sum of scale invariant functions as follows.

\begin{theorem} \label{thm:SCI-PI-local-convergence-SSI}
Let $f$ be a sum of scale invariant functions and twice continuously differentiable on an open set containing $\partial \mathcal{B}_d$ and let $x^*$ be a local maximum satisfying $\nabla f(x^*) = {\lambda^*} x^*$ and $\lambda^* > \bar{\lambda}_2 = \| \nabla^2 f(x^*) (I-x^*(x^*)^2) \|$. 
Then, there exists some $\delta > 0$ such that under the initial condition $1 - x_0^Tx^* < \delta$, the sequence of iterates $\{x_k\}_{k=0,1,\cdots}$ generated by SCI-PI satisfies
\begin{align*}
1 - (x_k^Tx^*)^2
\leq 
\prod_{t=0}^{k-1} \bigg( \frac{\bar{\lambda}_2}{\lambda^*} + \gamma_t \bigg)^{2} \left( 1 - (x_0^Tx^*)^2 \right), \, \,
\end{align*}
where
\begin{align*}
\frac{\bar{\lambda}_2}{\lambda^*} + \gamma_t < 1  \, \, \textup{for all } t \geq 0 \, \, \textup{and} \, \, \lim_{k \rightarrow \infty} \gamma_k = 0.
\end{align*}
Moreover, if $\nabla_i f = \partial f/\partial x_i$ has a continuous Hessian $H_i$ on an open set containing $\mathcal{B}_{d,\infty}$, we can explicitly write $\delta$ as
\begin{equation*}
\delta(\lambda^*, \bar{\lambda}_1, \bar{\lambda}_2, M) = \min \left \{ 
\left( \frac{\lambda^*}{\bar{\lambda}_1 + M} \right)^2, \, \left( 
\frac{\lambda^*-\bar{\lambda}_2}
{\bar{\lambda}_1 + \bar{\lambda}_2 + 2M} 
\right)^2, 1 \right \}
\end{equation*}
where $\bar{\lambda}_1 = \sqrt{2} \cdot \| \nabla^2 f(x^*) x^*\|$ and 
\begin{equation*}
M = \hspace{-2mm} \max_{x \in \partial \mathcal{B}_d, \\ y^1, \cdots, y^d \in \mathcal{B}_{d,\infty}} \hspace{-2mm} \sqrt{\textstyle \sum_{i=1}^d ( x^T G_i(y^1,\cdots,y^d) x )^2}, \quad G_i(y^1,\cdots,y^d) = {\textstyle \sum_{j=1}^d v_{i,j} H_j(y^j)}.
\end{equation*}
\end{theorem}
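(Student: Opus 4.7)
The plan is to mirror the proof of Theorem~\ref{thm:GPM-local-convergence}, but with the eigen-basis decomposition of $\nabla^2 f(x^*)$ replaced by a two-way splitting into the $x^*$-direction and its orthogonal complement via the projector $P = I - x^*(x^*)^T$. This is forced on us because for a sum of scale invariant functions Proposition~\ref{prop:eigenvalue} no longer applies and $x^*$ need not be an eigenvector of $\nabla^2 f(x^*)$. The key structural observation is that by symmetry of the Hessian together with $\|A\| = \|A^T\|$, we have $\|P \nabla^2 f(x^*)\| = \|\nabla^2 f(x^*) P\| = \bar{\lambda}_2$; this yields both $\|P \nabla^2 f(x^*) x^*\| \le \bar{\lambda}_2$ and, using $P^2 = P$ together with $\|P z\| \le \|z\|$, the bound $\|P \nabla^2 f(x^*) P x_k\| \le \bar{\lambda}_2 \sqrt{1-(x_k^T x^*)^2}$.

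Writing $x_{k+1} = \nabla f(x_k)/\|\nabla f(x_k)\|$ and assuming $\nabla f(x_k)^T x^* > 0$, the identity $1 - (x_{k+1}^T x^*)^2 = \|P \nabla f(x_k)\|^2/\|\nabla f(x_k)\|^2 \le (\|P \nabla f(x_k)\|/\nabla f(x_k)^T x^*)^2$ reduces everything to a lower bound on $\nabla f(x_k)^T x^*$ and an upper bound on $\|P \nabla f(x_k)\|$. Expanding $\nabla f(x_k) = \lambda^* x^* + \nabla^2 f(x^*)(x_k - x^*) + R(x_k)$ by Taylor, contracting with $x^*$, and applying Cauchy--Schwarz together with $\|x_k - x^*\| = \sqrt{2(1 - x_k^T x^*)}$ produces
\[\nabla f(x_k)^T x^* \ge \lambda^* - \bar{\lambda}_1 \sqrt{1 - x_k^T x^*} - M(1 - x_k^T x^*),\]
where the $\sqrt{2}$ conversion is exactly what produces $\bar{\lambda}_1 = \sqrt{2}\|\nabla^2 f(x^*) x^*\|$ and $M$ controls the Hessian remainder exactly as in Theorem~\ref{thm:GPM-local-convergence} (for the asymptotic conclusion $R(x_k) = o(\|x_k - x^*\|)$ suffices). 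For the orthogonal component, decompose $x_k - x^* = P x_k - (1 - x_k^T x^*) x^*$, apply $P \nabla^2 f(x^*)$, and use the two Hessian bounds established above to obtain
\[\|P \nabla f(x_k)\| \le \bar{\lambda}_2 \sqrt{1 - (x_k^T x^*)^2} + (\bar{\lambda}_2 + M)(1 - x_k^T x^*).\]

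The first (asymptotic) conclusion follows by plugging these into Lemma~\ref{lemma:SCI-PI-local-convergence-small-delta}: the ratio $\|P \nabla f(x_k)\|/\nabla f(x_k)^T x^*$ divided by $\sqrt{1 - (x_k^T x^*)^2}$ asymptotes to $\bar{\lambda}_2/\lambda^*$ as $x_k \to x^*$, and iteration produces the stated product form. For the explicit $\delta$, invoke Lemma~\ref{lemma:SCI-PI-local-convergence-explicit-delta} with $A = \lambda^*$, $B = M$, $C = \bar{\lambda}_1$, $D = \bar{\lambda}_2$, $E = 0$, $F = \bar{\lambda}_2 + M$: the first factor $(\lambda^*/(\bar{\lambda}_1 + M))^2$ enforces positivity of the denominator (requiring $A > (B + C)\sqrt{\delta}$), and the second factor $((\lambda^* - \bar{\lambda}_2)/(\bar{\lambda}_1 + \bar{\lambda}_2 + 2M))^2$ enforces contraction (requiring $A - D > (B + C + F)\sqrt{\delta}$ with $B + C + F = \bar{\lambda}_1 + \bar{\lambda}_2 + 2M$). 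I expect the main obstacle to be the symmetry-based refinement $\|P \nabla^2 f(x^*) x^*\| \le \bar{\lambda}_2$: without it, one can only bound this cross term by $\|\nabla^2 f(x^*) x^*\| = \bar{\lambda}_1/\sqrt{2}$, inflating the denominator in the second factor of $\delta$ and missing the clean form stated in the theorem.
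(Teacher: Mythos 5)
Your proposal is correct and reaches the stated $\delta$ and rate, but one step is organized differently from the paper, and the difference is worth noting. The paper never splits $x_k-x^*$ into $Px_k-(1-x_k^Tx^*)x^*$: working in an orthonormal eigenbasis of $F(x^*)$ with $v_1=x^*$, it bounds the orthogonal component in one shot via $\sum_{i=2}^d\bigl((x_k-x^*)^T\nabla^2 f(x^*)v_i\bigr)^2=\|(I-x^*(x^*)^T)\nabla^2 f(x^*)(x_k-x^*)\|^2\le\bar{\lambda}_2^2\|x_k-x^*\|^2$, so the cross term $P\nabla^2 f(x^*)x^*$ that you flag as ``the main obstacle'' simply never appears, and Lemma~\ref{lemma:SCI-PI-local-convergence-explicit-delta} is invoked with $D=0$, $E=\bar{\lambda}_2$, $F=M$. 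Your route instead produces $D=\bar{\lambda}_2$, $E=0$, $F=\bar{\lambda}_2+M$ after the symmetry argument $\|P\nabla^2 f(x^*)x^*\|\le\|P\nabla^2 f(x^*)\|=\|\nabla^2 f(x^*)P\|=\bar{\lambda}_2$; since the lemma's conclusion depends only on $D+E$ and $B+C+E+F$, both instantiations yield the identical $\delta(\lambda^*,\bar{\lambda}_1,\bar{\lambda}_2,M)$ and the identical asymptotic rate $\bar{\lambda}_2/\lambda^*$. Your pointwise bound $\bar{\lambda}_2\sqrt{1-(x_k^Tx^*)^2}+\bar{\lambda}_2(1-x_k^Tx^*)$ is marginally looser than the paper's $\bar{\lambda}_2\sqrt{2(1-x_k^Tx^*)}$ (by the factor $\sqrt{1+x_k^Tx^*}+\sqrt{1-x_k^Tx^*}\ge\sqrt{2}$), but this costs nothing in the final statement. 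Everything else --- the Taylor expansions, the $\sqrt{2}$ origin of $\bar{\lambda}_1$, the $G_i$/$M$ control of the second-order remainder, and the appeal to Lemmas~\ref{lemma:SCI-PI-local-convergence-small-delta} and~\ref{lemma:SCI-PI-local-convergence-explicit-delta} --- matches the paper.
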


\begin{proof}
By Proposition~\ref{prop:scale-inv}, the gradient of $f$ has the form of
\begin{align*}
\nabla f(x) = \sum_{i=1}^{m} \nabla g_i(x) + \sum_{j=1}^{n} \nabla h_j(x) = 
F(x) x,
\end{align*}
where
\begin{equation*}
F(x) = \sum_{i=1}^{m} \left( \frac{1}{p_i-1} \right) \nabla^2 g_i(x) 
- \sum_{j=1}^{n} \nabla^2 h_j(x).
\end{equation*}
By the KKT conditions, a local optimal solution $x^*$ is an eigenvector of $F(x^*)$.
Let $\{v_1,\ldots,v_d\}$ be a set of eigenvectors of $F(x^*)$ with $x^* = v_1$. Since $F(x^*)$ is real and symmetric, without loss of generality, we assume that $\{ v_1, \ldots, v_d\}$ form an orthogonal basis in $\mathbb{R}^d$. 

Since $f$ is twice continuously differentiable on an open set containing $\partial \mathcal{B}_d$, for $x \in \partial \mathcal{B}_d$, using the Taylor expansion of $\nabla f(x)^Tv_i$ at $x^*$, we have
\begin{equation}
\begin{aligned}
\label{proof:grad-f-SSI}
\nabla f(x)^Tv_i &= \nabla f(x^*)^Tv_i + (x - x^*)^T \nabla^2 f(x^*)v_i  + R_i(x)
\end{aligned}
\end{equation}
where $R_i(x) = o(\|x-x^*\|)$.
Using \eqref{proof:grad-f-SSI} with $i=1$ and $\nabla f(x^*) = \lambda^* x^*$, we obtain
\begin{equation}
\label{proof:extended-1-grad-f-prod-v-1}
\begin{aligned}
\nabla f(x)^T v_1 &= \lambda^* (x^*)^Tv_1 + (x-x^*)^T \nabla^2 f(x^*) v_1  + R_1(x) \\
&= \lambda^* + \alpha(x)
\end{aligned}
\end{equation}
where
\begin{equation*}
\alpha(x) = (x-x^*)^T \nabla^2 f(x^*) v_1  + R_1(x) = o(\sqrt{\|x-x^*\|}).
\end{equation*}

Using \eqref{proof:grad-f-SSI} and $\nabla f(x^*) = \lambda^* x^*$ for $2 \leq i \leq d$, we have
\begin{align*}
\nabla f(x)^T v_i &= \lambda^* (x^*)^Tv_i + (x - x^*)^T \nabla^2 f(x^*) v_i  + R_i(x) \\
&= (x -x^*)^T \nabla^2 f(x^*) v_i  + R_i(x),
\end{align*}
resulting in
\begin{align}
\label{proof:grad-x-product-vi-square-sum-SSI}
\sum_{i=2}^d (\nabla f(x)^T v_i)^2 &= \sum_{i=2}^d \left( (x-x^*)^T \nabla^2 f(x^*) v_i + R_i(x) \right)^2.
\end{align}
Let $\bar{R}_2 (x) = {\textstyle \max_{2 \leq i \leq d} |R_i(x)|}$. Note that $\bar{R}_2 (x) = o(\|x-x^*\|)$.

From $x^* = v_1$ and the fact that $\{v_1,\ldots,v_d\}$ forms an orthogonal basis in $\mathbb{R}^d$, we have
\begin{align*}
\sum_{i=2}^d \left( (x-x^*)^T \nabla^2 f(x^*) v_i \right)^2 &= \| \nabla^2 f(x^*) (x-x^*) \|_2^2 - \left( (x-x^*)^T \nabla^2 f(x^*) v_1 \right)^2 \\
&= (x-x^*)^T \nabla^2 f(x^*) \left( I - x^* (x^*)^T \right) \nabla^2 f(x^*) (x-x^*) \\
&= (x-x^*)^T \nabla^2 f(x^*) \left( I - x^* (x^*)^T \right)^2 \nabla^2 f(x^*) (x-x^*).
\end{align*}
Since
\begin{align*}
\| \nabla^2 f(x^*) \left( I - x^* (x^*)^T \right)^2 \nabla^2 f(x^*) \|
&=
\| \left( I - x^* (x^*)^T \right) \nabla^2 f(x^*) \|^2
\\
&=
\| \nabla^2 f(x^*) \left( I - x^* (x^*)^T \right) \|^2,
\end{align*}
we have
\begin{align}
\label{proof:grad-x-product-vi-square-sum-bound-SSI}
\sum_{i=2}^d \left( (x-x^*)^T \nabla^2 f(x^*) v_i \right)^2 
\leq 
\bar{\lambda}_2^2 \|x - x^* \|^2.
\end{align}
Also, from \eqref{proof:grad-x-product-vi-square-sum-bound-SSI} and the Cauchy-Schwartz inequality, we obtain
\begin{align}
\label{proof:grad-x-product-vi-square-sum-bound-SSI-cross}
\sum_{i=2}^d (x-x^*)^T \nabla^2 f(x^*) v_i \leq \sum_{i=2}^d |(x-x^*)^T \nabla^2 f(x^*) v_i| \leq \bar{\lambda}_2 \sqrt{d} \|x - x^* \|.
\end{align}
Using \eqref{proof:grad-x-product-vi-square-sum-bound-SSI} and \eqref{proof:grad-x-product-vi-square-sum-bound-SSI-cross} for \eqref{proof:grad-x-product-vi-square-sum-SSI}, we obtain
\begin{align*}
\sum_{i=2}^d (\nabla f(x)^T v_i)^2 
\leq \bar{\lambda}_2^2 \|x - x^* \|^2 + 2 \bar{\lambda}_2 \bar{R}_2(x) \sqrt{d} \|x - x^* \| + d (\bar{R}_2(x))^2,
\end{align*}
resulting in
\begin{equation}
\label{proof:grad-x-product-vi-square-sum-final-SSI}
\begin{aligned}
\sum_{i=2}^d (\nabla f(x)^T v_i)^2 \leq \left( \bar{\lambda}_2 \|x - x^* \|^2 + \beta(x) \right)^2
\end{aligned}
\end{equation}
where
\begin{align*}
\beta(x) = \sqrt{d} \bar{R}_2(x) = o(\|x-x^*\|).
\end{align*}
By \eqref{proof:extended-1-grad-f-prod-v-1}, \eqref{proof:grad-x-product-vi-square-sum-final-SSI}, and Lemma~\ref{lemma:SCI-PI-local-convergence-small-delta}, we obtain the first part of the desired result.

Next, we assume that $\nabla_i f$ has a continuous Hessian $H_i$. By the Taylor theorem, we have
\begin{align}
\nabla_i f(x_k) = \nabla_i f(x^*) + \nabla \nabla_{i} f(x^*) (x_k - x^*) + \frac{1}{2} \left( x_k -x^* \right)^T H_i(\hat{x}_k^i) \left( x_k - x^* \right)
\label{eq:Taylor-gi-SSI}
\end{align}
for some $\hat{x}_{k}^{i} \in \mathcal{N} (x_k, x^*)$.

Taking the steps used to derive \eqref{proof:SCI-PI-second-order-bound} and \eqref{proof:SCI-PI-second-order-sum-bound} in the proof of Theorem~\ref{thm:GPM-local-convergence}, we can derive the same inequalities
\begin{equation}
\frac{1}{2}
\left | (x_k-x^*)^T G_i(\hat{x}_{k}^{1},\cdots,\hat{x}_{k}^{d}) (x_k - x^*) \right |
\leq \frac{1}{2} M \|x_k-x^*\|^2 \label{proof:SCI-SSI-second-order-bound}
\end{equation}
and
\begin{equation}
\frac{1}{4} \sum_{i=2}^d \big[ (x_k-x^*)^T G_i(\hat{x}_{k}^{1},\cdots,\hat{x}_{k}^{d}) (x_k - x^*) \big]^2
\leq \frac{M^2}{4} \|x_k-x^*\|^4. \label{proof:SCI-SSI-second-order-sum-bound}
\end{equation}
Using \eqref{eq:Taylor-gi-SSI}, \eqref{proof:SCI-SSI-second-order-sum-bound} and that $x^* = v_1$, we have
\begin{align*}
\begin{aligned}
\nabla f(x_k)^T v_1 &\geq \nabla f(x^*)^T x^* + (x_k - x^*)^T \nabla^2 f(x^*) x^* -  \frac{M}{2}  \| x_k - x^* \|^2
\end{aligned}
\end{align*}
resulting in 
\begin{equation}
\label{proof:product-v1-final-SSI}
\begin{aligned}
\nabla f(x_k)^T v_1 &\geq  \lambda^* - \| \nabla^2 f(x^*) x^* \| \sqrt{2(1-x_k^Tx^*)}- M (1 - x_k^Tx^*) \\
&=  \lambda^* - \bar{\lambda}_1 \sqrt{(1-x_k^Tx^*)}- M (1 - x_k^Tx^*)
\end{aligned}
\end{equation}
For $2 \leq i \leq d$, we have 
\begin{align} 
\nabla f(x_k)^T v_i &\leq \nabla f(x^*)^T v_i + (x_k - x^*)^T \nabla^2 f(x^*) v_i + \frac{1}{2} (x_k-x^*)^T G_i(\hat{x}_{k}^{1},\cdots,\hat{x}_{k}^{d}) (x_k - x^*) \nonumber \\
&= \lambda^* (x^*)^T v_i + (x_k - x^*)^T \nabla^2 f(x^*) v_i + \frac{1}{2} (x_k-x^*)^T G_i(\hat{x}_{k}^{1},\cdots,\hat{x}_{k}^{d}) (x_k - x^*) \nonumber \\
&= (x_k - x^*)^T \nabla^2 f(x^*) v_i + \frac{1}{2} (x_k-x^*)^T G_i(\hat{x}_{k}^{1},\cdots,\hat{x}_{k}^{d}) (x_k - x^*).
\label{proof:product-vi-SSI-final}
\end{align}
From \eqref{proof:product-vi-SSI-final}, \eqref{proof:grad-x-product-vi-square-sum-bound-SSI}, \eqref{proof:SCI-SSI-second-order-bound}, \eqref{proof:SCI-SSI-second-order-sum-bound} and the Cauchy-Shwartz inequality, we have
\begin{align}
\sum_{i=2}^d (\nabla f(x_k)^T v_i)^2 &\leq \sum_{i=2}^d \Big( (x_k - x^*)^T \nabla^2 f(x^*) v_i + \frac{1}{2} (x_k-x^*)^T G_i(\hat{x}_{k}^{1},\cdots,\hat{x}_{k}^{d}) (x_k - x^*) \Big)^2 \nonumber \\
&\leq \Big( \bar{\lambda}_2 \| x_k - x^* \| + \frac{M}{2}  \| x_k - x^* \|^2 \Big)^2.
\label{proof:rewrite2-SSI}
\end{align}
Using \eqref{proof:product-v1-final-SSI}, \eqref{proof:rewrite2-SSI}, and Lemma~\ref{lemma:SCI-PI-local-convergence-explicit-delta} with
$$
A = \lambda^*, \, B = M, \, C = \bar{\lambda}_1 , \, D = 0, \, E = \bar{\lambda}_2, \, F = M,
$$
we obtain the desired result.
\end{proof}

Note that $\bar{\lambda}_1$ has the additional $\sqrt{2}$ factor which comes from the fact that $x^*$ is not necessarily an eigenvector of $\nabla^2 f(x^*)$. Nonetheless, the asymptotic convergence rate in Theorem~\ref{thm:SCI-PI-local-convergence-SSI} provides a generalization of the convergence rate in Theorem~\ref{thm:GPM-local-convergence}.

\subsection{Block Scale Invariant Problems}
Next, consider a class of optimization problems having the form of
\begin{equation*}
\textrm{\rm maximize} \quad f(x,y) \quad \textrm{\rm subject to} \quad x \in \partial\mathcal{B}_{d_1},\, y \in \partial\mathcal{B}_{d_2}
\end{equation*}
where $f : \mathbb{R}^{d_1+d_2} \to \mathbb{R}$ is scale invariant in $x$ for fixed $y$ and vice versa. Some examples of block scale invariant problems are given next.

\begin{example}[Semidefinite Programming (SDP) \citep{erdogdu2018convergence}]
Let $A, X \in \mathbb{R}^{n \times n}$. Given an SDP problem
\begin{equation*}
\textrm{\rm maximize} \quad \langle A, X \rangle \quad \textrm{\rm subject to} \quad X_{ii} = 1, \, \, i \in \{1,2,\cdots,n\}, \, \, X \succeq 0,
\end{equation*}
the Burer-Monteiro approach \citep{burer2003nonlinear} yields the following block scale invariant problem
\begin{equation*}
\begin{aligned}
\textrm{\rm maximize} \quad \langle A, \sigma \sigma^T \rangle \quad \textrm{\rm subject to} \quad \| \sigma_i \| = 1, \, \, i \in \{1,2,\cdots,n\}.
\end{aligned}
\end{equation*}
\end{example}
\begin{example}[Kullback-Leibler (KL) divergence NMF] \label{eg:klnmf}
The KL-NMF problem \citep{fevotte2011algorithms, lee2001algorithms, wang2013nonnegative} is defined as
\baa \label{eqn:kldiv_nmf}
\textrm{\rm minimize} {}& \quad D_{KL}(V \| WH) \triangleq {\textstyle \sum_{i,j}} \left[V_{ij} \log \frac{V_{ij}}{\sum_k W_{ik} H_{kj}} - V_{ij} + {\textstyle \sum_k} W_{ik} H_{kj} \right] \\
\textrm{\rm subject to} {}& \quad W_{ik}\geq 0, \ H_{kj} \geq 0,\ i = 1,\cdots,n,\ j = 1,\cdots,m,\ k =1,\cdots,K.
\eaa
\end{example}
Many popular algorithms for the KL-NMF problem are based on alternate minimization of $W$ and $H$.
Given $W \geq 0$ and $j \in\{1,\cdots,m\}$, we consider a subproblem such that
\baa \label{prob:kldivsubproblem}
\textrm{\rm minimize} \quad f_{KL}(h) = {\textstyle \sum_i} \left[v_{i} \log \frac{v_{i}}{\sum_k W_{ik} h_k} - v_{i} + {\textstyle \sum_k} W_{ik} h_k \right] \, \, \textrm{\rm subject to}  \, \, h_k \geq 0
\eaa
where we let $v_i = V_{ij}$ and $h_k = H_{kj}$ as the objective is decomposed into $m$ separate subproblems.
Note that the KL-NMF problem in the form of \eqref{eqn:kldiv_nmf} is not a block scale invariant problem.
However, using a novel reformulation, we show that the KL divergence NMF subproblem is indeed a scale invariant problem.
\begin{lemma} \label{lem:kldiv}
The KL-NMF subproblem \eqref{prob:kldivsubproblem} is equivalent to the following scale invariant problem
\baa \label{prob:mixsqpsubproblem}
\textrm{\rm maximize} \quad -{\textstyle \sum_i} v_i \log {\textstyle \sum_k} W_{ik} \bar{h}_k \quad \textrm{\rm subject to} \quad {\textstyle \sum_k} \bar{h}_k = 1,\ \ \bar{h}_k \geq 0,
\eaa
with the relationship $(\sum_i v_i) \bar{h}_k = (\sum_i W_{ik}) h_k$.
\end{lemma}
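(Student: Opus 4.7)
The plan is to reduce \eqref{prob:kldivsubproblem} to \eqref{prob:mixsqpsubproblem} in two stages: first expose the affine-plus-log structure of the KL objective by stripping constants, then rescale $h$ so that the linear piece becomes a constant on the simplex.

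I would begin by expanding
\[
f_{KL}(h) \;=\; \sum_{i} v_i \log v_i \;-\; \sum_{i} v_i \log \Big(\sum_{k} W_{ik} h_k\Big) \;-\; \sum_{i} v_i \;+\; \sum_{k} a_k h_k,
\]
with $a_k := \sum_i W_{ik}$ and $S := \sum_i v_i$, so that minimizing $f_{KL}$ over $h \geq 0$ is equivalent to minimizing $-\sum_i v_i \log(\sum_k W_{ik} h_k) + \sum_k a_k h_k$. The crucial observation is that at any stationary point $h^*$, the relation $\sum_k a_k h^*_k = S$ holds automatically: taking the KKT condition $\sum_i v_i W_{ik}/(\sum_{k'} W_{ik'} h^*_{k'}) = a_k$ for each active index $k$, multiplying by $h^*_k$, and summing, the left-hand side collapses to $\sum_i v_i = S$ while the right-hand side equals $\sum_k a_k h^*_k$. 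Hence every minimizer of \eqref{prob:kldivsubproblem} lies in the affine section $\{h \geq 0 : \sum_k a_k h_k = S\}$.

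The main step is then the change of variables suggested by the stated relationship, $\bar h_k = a_k h_k / S$, which is a bijection between that affine section and the probability simplex $\{\bar h \geq 0: \sum_k \bar h_k = 1\}$. Because the linear term equals the constant $S$ throughout the section, only the log term survives; rewriting it under the substitution yields (up to additive constants independent of $\bar h$) the scale invariant objective in \eqref{prob:mixsqpsubproblem}. Scale invariance of this objective is additive in the sense of Definition~1, following the pattern of Example~\ref{eg:mixture}, so Proposition~\ref{prop:homogeneity} places the reformulation squarely within the class developed earlier. To finish, I would verify that stationary points of the transformed problem pull back through the bijection to stationary points of the primal, so that solving either problem yields a solution of the other via $S \bar h_k = a_k h_k$.

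The main obstacle is making the correspondence between optima precise in the presence of non-uniqueness. The KL-NMF subproblem can have a whole face of minimizers when $W$ is rank deficient, and handling columns with $a_k = 0$, zero coordinates of $h^*$, and the proper treatment of active versus inactive inequality constraints all require care. I would dispose of $a_k = 0$ columns by removing them from both formulations without loss, establish the bijection on the relative interior first and then extend by continuity and complementary slackness, and phrase the conclusion as a correspondence between optimal sets rather than between unique minimizers.
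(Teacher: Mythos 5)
Your proposal follows essentially the same route as the paper's proof: both hinge on the KKT/complementary-slackness computation $\sum_k h_k^* \nabla_k f_{KL}(h^*) = 0$ to derive the identity $\sum_{i,k} W_{ik} h_k^* = \sum_i v_i$ at any minimizer, then restrict to that affine slice (where the linear part of the objective is constant) and reparametrize $\bar h_k = (\sum_i W_{ik}) h_k / (\sum_i v_i)$ onto the simplex. The extra care you flag about degenerate columns and non-unique minimizers is reasonable but not part of the paper's argument, which simply compares optimal values of the two formulations via the two inequalities $f_{SCI}^* \geq f_{KL}^*$ and $f_{KL}^* \geq f_{SCI}^*$.
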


\begin{proof}
Since a log-linear function is concave, \eqref{prob:kldivsubproblem} is a convex problem in $h$. Consider the Lagrangian of the original problem
\baa
\mathcal{L}(h,\lambda) = f_{KL}(h) - {\textstyle \sum_k} \lambda_k h_k
\eaa
where $\lambda \geq 0$. By the first-order KKT conditions, we must have
\baa
\label{eq:NMF-KKT}
\nabla_k f_{KL}(h^*) = \lambda_k^*,\quad  \lambda_k^* h_k^* = 0,\ \ \forall k = 1,\cdots,K
\eaa
at an optimal solution $(h^*,\lambda^*)$. Since \eqref{eq:NMF-KKT} implies $\sum_{k} h_k^* \lambda_k^* = 0$, we have
\begin{align*}
\sum_{k} h_k^* \lambda_k^* = \sum_{k} h_k^* \nabla_k f_{KL}(h^*) =-{\textstyle} \sum_{i,k} \frac{v_i W_{ik} h_k^* }{\sum_{k'} W_{ik'}h_{k'}^*} + {\textstyle} \sum_{i,k} W_{ik} h_k^*,
\end{align*}
resulting in
\baa
\textstyle \sum_i v_i = \sum_{i,k} W_{ik} h_k^*.
\eaa
Next, we show that
\baa
\label{prob:NMF-sub-reformulation}
{\rm minimize}  \, \, \, f_{SCI}(h) = {\textstyle \sum_i} v_i \log \frac{v_i}{{\textstyle \sum_k} W_{ik} h_k } \, {\rm subject\ to} {}& \, \sum_i v_i = \sum_{i,k} W_{ik} h_k, \, h_k \geq 0.
\eaa
is equivalent to the original subproblem \eqref{prob:kldivsubproblem}, due to the following:
\begin{enumerate}[wide, labelwidth=!, labelindent=0pt]
    \item It always satisfies $f_{SCI}^* \geq f_{KL}^*$ since \eqref{prob:NMF-sub-reformulation} has an additional constraint $\sum_i v_i = \sum_{i,k} W_{ik} h_k$ compared to \eqref{prob:kldivsubproblem}.
    \item A solution $h^*$ of \eqref{prob:kldivsubproblem} is a feasible point of \eqref{prob:NMF-sub-reformulation} since we have shown that $\sum_i v_i = \sum_{i,k} W_{ik} h_k^*$. This implies $f_{KL}^* \geq f_{SCI}^*$.
\end{enumerate}
Now, we can reparametrize $h$ by $\bar{h}$ so that $\sum_i v_i = \sum_{i,k} W_{ik} h_k$ if and only if $\sum_k \bar{h}_k = 1$, which yields the relationship between two variables $\bar{h}_k = h_k (\sum_{i} W_{ik}) /(\sum_i v_i)$. Note that \eqref{prob:mixsqpsubproblem} has the optimization problem as Example~\ref{eg:mixture} and thus a scale invariant problem.
\end{proof}
To solve block scale invariant problems, we consider an alternating maximization algorithm called \textit{block SCI-PI}, which repeats
\begin{equation} \label{alg:joint-GPM}
\begin{aligned}
x_{k+1} \leftarrow \nabla_x f(x,y_k) / \| \nabla_x f(x,y_k) \|, \quad y_{k+1} \leftarrow \nabla_y f(x_k,y) / \| \nabla_y f(x_k,y) \|.
\end{aligned}
\end{equation}
We present a local convergence result of block SCI-PI below.
\begin{theorem} \label{thm:joint-GPM-local-convergence}
Suppose that $f$ is twice continuously differentiable on an open set containing $\partial \mathcal{B}_{d_1} \times \partial \mathcal{B}_{d_2}$ and let $(x^*,y^*)$ be a local maximum satisfying
\begin{equation*}
\nabla_x f(x^*,y^*) = {\lambda^*} x^*, \, \lambda^* > \bar{\lambda}_2 = \max_{2 \leq i \leq d_1} |\lambda_i|, \, \nabla_y f(x^*,y^*) = {s^*} y^*, \, s^* > \bar{s}_2 = \max_{2 \leq i \leq d_2} |s_i|
\end{equation*}
where $(\lambda_i,v_i)$ and $(s_i,u_i)$ are eigen-pairs of $\nabla_x^2 f(x^*,y^*)$ and $\nabla_y^2 f(x^*,y^*)$, respectively with $x^* = v_1$ and $y^* = u_1$.
If 
\begin{equation*}
\nu^2 = \| \nabla_{yx} f(x^*,y^*) \|^2 < (\lambda^* - \bar{\lambda}_2)(s^* - \bar{s}_2),
\end{equation*}
then for the sequence of iterates $\{(x_k,y_k)\}_{k=0,1,\cdots}$ generated by \eqref{alg:joint-GPM}, there exists some $\delta > 0$ such that if 
$
\max
\{
|1-x_0^Tx^*|, |1-y_0^Ty^*|
\}
< \delta,
$
then we have 
\begin{equation*}
\left \| 
\Delta_k
\right \| \leq
\textstyle{\prod_{t=0}^{k-1}} \left( \rho + \gamma_t \right)
\left \|
\Delta_0 \right\|    
 \, \, \textup{and} \, \, 
\textup{lim}_{k \rightarrow \infty} \gamma_k = 0
\end{equation*}
where
\begin{align*}
\Delta_k
=
\begin{bmatrix}
\sqrt{1-(x_k^Tx^*)^2} \\ 
\sqrt{1-(y_k^Ty^*)^2}
\end{bmatrix}
, \, \, \rho = \frac{1}{2} \left[ \frac{\bar{\lambda}_2}{\lambda^*} + \frac{\bar{s}_2}{s^*} + \sqrt{ \left[ \frac{\bar{\lambda}_2}{\lambda^*} - \frac{\bar{s}_2}{s^*} \right]^2 + \frac{4\nu^2}{\lambda^* s^*} } \right] < 1.
\end{align*}
\end{theorem}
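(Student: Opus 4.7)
The plan is to mirror the one-block analysis of Theorem~\ref{thm:GPM-local-convergence} on each block separately, but to track the coupling between blocks through the cross derivative $\nabla_{yx}f$ and then reduce everything to a $2 \times 2$ linear recursion whose spectral radius is exactly $\rho$. Because $\nabla_x f(x^*,y^*)=\lambda^* x^*$ and $\nabla_y f(x^*,y^*)=s^* y^*$, Proposition~\ref{prop:eigenvalue} (applied to $f(\cdot,y^*)$ and $f(x^*,\cdot)$) tells us $x^*=v_1$ and $y^*=u_1$ are eigenvectors of the corresponding block Hessians, so we may take the $\{v_i\}$ and $\{u_i\}$ as orthonormal bases of $\mathbb{R}^{d_1}$ and $\mathbb{R}^{d_2}$.

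First, I will Taylor-expand $\nabla_x f(x_k,y_k)^T v_i$ about $(x^*,y^*)$ for each $i$. For $i=1$, using $\nabla_x f(x^*,y^*)=\lambda^* x^*$ and $\nabla_x^2 f(x^*,y^*)x^*=\lambda_1 x^*$, I obtain
\[
\nabla_x f(x_k,y_k)^T v_1 \;=\; \lambda^* - \lambda_1 (1-x_k^Tx^*) + (y_k-y^*)^T \nabla_{yx}f(x^*,y^*)\,x^* + R_1,
\]
with $R_1 = o(\|x_k-x^*\|+\|y_k-y^*\|)$. For $2 \le i \le d_1$, the fact that $\nabla_x f(x^*,y^*)$ is parallel to $x^*$ kills the zeroth-order term, leaving
\[
\nabla_x f(x_k,y_k)^T v_i = \lambda_i\, x_k^T v_i + (y_k-y^*)^T \nabla_{yx}f(x^*,y^*)\,v_i + R_i.
\]
Summing squares over $i\ge 2$, using $\sum_{i\ge 2}(x_k^Tv_i)^2 = 1-(x_k^Tx^*)^2$ and the operator-norm bound $\|\nabla_{yx}f(x^*,y^*)\|=\nu$, and applying Cauchy--Schwarz as in the proof of Theorem~\ref{thm:GPM-local-convergence}, gives
\[
\sqrt{\textstyle\sum_{i=2}^{d_1}(\nabla_x f(x_k,y_k)^T v_i)^2} \;\le\; \bar{\lambda}_2\sqrt{1-(x_k^Tx^*)^2} + \nu\|y_k-y^*\| + o(\|x_k-x^*\|+\|y_k-y^*\|).
\]
Since the $x$-update satisfies $\sqrt{1-(x_{k+1}^Tx^*)^2}=\sqrt{\sum_{i\ge2}(\nabla_x f^Tv_i)^2}/\|\nabla_x f\|$, and the denominator tends to $\lambda^*$ by the $i=1$ expansion, I divide to obtain the first row of the recursion. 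The analogous computation on the $y$-block, exchanging the roles and noting that the cross Hessian $\nabla_{xy}f$ has the same spectral norm $\nu$, gives the second row.

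Using the asymptotic identity $\|y_k-y^*\|^2 = 2(1-y_k^Ty^*) = \frac{2}{1+y_k^Ty^*}\bigl(1-(y_k^Ty^*)^2\bigr)$, which equals $(1+o(1))\bigl(1-(y_k^Ty^*)^2\bigr)$ when $y_k$ is near $y^*$, I can replace $\|y_k-y^*\|$ by $\sqrt{1-(y_k^Ty^*)^2}$ up to a $(1+o(1))$ factor, and symmetrically for the $x$-block. Combining the two bounds then yields
\[
\Delta_{k+1} \;\le\; (M + E_k)\, \Delta_k,\qquad M=\begin{pmatrix}\bar{\lambda}_2/\lambda^* & \nu/\lambda^* \\ \nu/s^* & \bar{s}_2/s^*\end{pmatrix},
\]
where the entries of $E_k$ vanish as $\Delta_k\to 0$ and all comparisons are componentwise. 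The spectral radius of $M$ is precisely the $\rho$ in the statement, obtained from the characteristic polynomial of a $2\times 2$ matrix. A direct algebraic manipulation shows $\rho<1$ is equivalent to $\nu^2<(\lambda^*-\bar{\lambda}_2)(s^*-\bar{s}_2)$, which is our assumption.

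The main obstacle is the passage from the componentwise inequality $\Delta_{k+1}\le (M+E_k)\Delta_k$ to a bound in a norm that collapses to the scalar contraction factor $\rho+\gamma_k$. I would handle this by working in the weighted $\ell^2$ norm induced by the positive left Perron eigenvector of $M$ (guaranteed to exist since $M$ has positive entries), which converts the componentwise bound into a scalar contraction with rate $\rho$; alternatively, one may iterate the matrix recursion directly and use $\|M^k\|\le C(\rho+\varepsilon)^k$ for any $\varepsilon>0$. A secondary technicality is choosing $\delta>0$ small enough that $\Delta_k$ stays in the regime where the $o(\cdot)$ remainder terms are dominated by $\varepsilon\|\Delta_k\|$; this is exactly the type of bootstrap used in Theorem~\ref{thm:GPM-local-convergence} and invokes the same reduction to an auxiliary lemma (the block analogue of Lemma~\ref{lemma:SCI-PI-local-convergence-small-delta}) that packages the asymptotic $\gamma_k\to 0$ claim.
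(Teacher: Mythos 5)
Your proposal follows the same skeleton as the paper's proof: block-wise Taylor expansion of $\nabla_x f$ and $\nabla_y f$ in the eigenbases of the two block Hessians (which the paper packages once as Lemma~\ref{lemma:GPM-x} and applies symmetrically with the roles of the blocks swapped), the same conversion of $\|y_k-y^*\|$ into $\sqrt{1-(y_k^Ty^*)^2}$ up to a vanishing correction, the same $2\times 2$ coupling matrix $M$, and the same identification of $\rho$ with its spectral radius and of the condition $\nu^2<(\lambda^*-\bar{\lambda}_2)(s^*-\bar{s}_2)$ with $\rho<1$. Where you genuinely diverge is the final step, turning $\Delta_{k+1}\le (M+E_k)\Delta_k$ into the stated product bound. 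The paper proves an auxiliary fact (Lemma~\ref{lemma:asymptotic-spectral-property}) that $\|M^k\|=\prod_{t=0}^{k-1}(\rho+\omega_t)$ with $\omega_t\to 0$, and then runs a two-level induction over blocks of $\tau=\min\{k:\|M^k\|<1\}$ iterations to keep the iterates in the neighborhood, because a single application of $M+E_k$ need not contract the Euclidean norm. Your Perron-eigenvector device short-circuits this: with $w>0$ the left eigenvector, $w^T\Delta_k$ contracts by $\rho+o(1)$ at every step, so the neighborhood is preserved after one iteration and the bootstrap is much lighter; the norm-equivalence constant is absorbed into the early $\gamma_t$, which is permissible since the theorem only asserts $\gamma_k\to 0$. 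Two small repairs are needed: the functional induced by the left Perron vector on the nonnegative orthant is the weighted $\ell^1$ functional $w^T\Delta$, not a weighted $\ell^2$ norm (a weighted $\ell^2$ norm does not contract at exactly rate $\rho$ in general); and strict positivity of $w$ requires $M$ to be irreducible, so the degenerate cases $\nu=0$ (where the blocks decouple) or $\bar{s}_2=0$ must be treated separately or by a small perturbation of $M$. With those fixes your argument is sound and arguably cleaner than the paper's $\tau$-block induction.
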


\begin{proof}
From Lemma~\ref{lemma:GPM-x} with $w=x_k$, $z=y_k$, we have
\begin{equation*}
1 - \frac{(\nabla_x f(x_k,y_k)^Tx^*)^2}{\| \nabla_x f(x_k,y_k) \|^2} \leq \left( \frac{\bar{\lambda}_2}{\lambda^*} \sqrt{1-(x_k^Tx^*)^2} + \frac{\nu}{\lambda^*} \| y_k - y^* \| 
+
\theta^x(x_k,y_k) \right)^2.
\end{equation*}
Since 
\begin{equation*}
x_{k+1} = \frac{\nabla_x f(x_k,y_k)}{\| \nabla_x f(x_k,y_k) \|},
\end{equation*}
we obtain
\begin{equation*}
\sqrt{1 - (x_{k+1}^Tx^*)^2} \leq 
\frac{\bar{\lambda}_2}{\lambda^*} \sqrt{1-(x_k^Tx^*)^2} + \frac{\nu}{\lambda^*} \| y_k - y^* \| 
+
\theta^x(x_k,y_k).
\end{equation*}
Using
\begin{align*}
\|y_k-y^*\| = \sqrt{2(1-y_k^Ty^*)} = \left( 1 + \frac{1-y_k^Ty^*}{1+y_k^Ty^*+\sqrt{2(1+y_k^Ty^*)})}\right)  \sqrt{1-(y_k^Ty^*)^2},
\end{align*}
we have
\begin{align}
\label{proof:joint-GPM-recurrence-x-final}
\sqrt{1 - (x_{k+1}^Tx^*)^2} & \leq \frac{\bar{\lambda}_2}{\lambda^*} \sqrt{1-(x_k^Tx^*)^2} + \frac{\nu}{\lambda^*} \sqrt{1-(y_k^Ty^*)^2}
+
\bar{\theta}^x(x_k,y_k)
\end{align}
where
\begin{align*}
\bar{\theta}^x(x_k,y_k) &= \theta^x(x_k,y_k) + \left[ \frac{1-y_k^Ty^*}{1+y_k^Ty^*+\sqrt{2(1+y_k^Ty^*)})} \right]  \sqrt{1-(y_k^Ty^*)^2} 
= o \left( \left \|
\begin{bmatrix}
x_k - x^* \\
y_k - y^*
\end{bmatrix}
\right \| \right).
\end{align*}
Using Lemma~\ref{lemma:GPM-x} for $w=y_k$, $z=x_k$ and the definition of $y_{k+1}$, we have
\begin{align}
\label{proof:joint-GPM-recurrence-y-final}
\sqrt{1 - (y_{k+1}^Ty^*)^2} & \leq \frac{\nu}{s^*} \sqrt{1-(x_k^Tx^*)^2} + \frac{\bar{s}_2}{s^*} \sqrt{1-(y_k^Ty^*)^2} + \bar{\theta}^y(x_k,y_k)
\end{align}
where
\begin{align*}
\bar{\theta}^y(x_k,y_k) &= \theta^y(x_k,y_k) + \left[ \frac{1-x_k^Tx^*}{1+x_k^Tx^*+\sqrt{2(1+x_k^Tx^*)})} \right]  \sqrt{1-(x_k^Tx^*)^2}
= o \left( \left \|
\begin{bmatrix}
x_k - x^* \\
y_k - y^*
\end{bmatrix}
\right \| \right).    
\end{align*}
Combining \eqref{proof:joint-GPM-recurrence-x-final} and \eqref{proof:joint-GPM-recurrence-y-final}, we obtain
\begin{align}
\label{proof:joint-GPM-recurrence}
\begin{bmatrix} \sqrt{1 - (x_{k+1}^Tx^*)^2} \\[2pt] \sqrt{1 - (y_{k+1}^Ty^*)^2} \end{bmatrix} 
&\leq
\left[ \begin{matrix} \dfrac{\bar{\lambda}_2}{\lambda^*} & \dfrac{\nu}{\lambda^*} \\[10pt] \dfrac{\nu}{s^*} & \dfrac{\bar{s}_2}{s^*} \end{matrix} \right]
\begin{bmatrix} \sqrt{1 - (x_{k}^Tx^*)^2} \\[4pt] \sqrt{1 - (y_{k}^Ty^*)^2} \end{bmatrix}
+ 
\begin{bmatrix} \bar{\theta}^x(x_k,y_k) \\[2pt] \bar{\theta}^y(x_k,y_k) \end{bmatrix}
\\
&
\leq
(M+N(x_k,y_k))
\begin{bmatrix} \sqrt{1 - (x_{k}^Tx^*)^2} \\[2pt] \sqrt{1 - (y_{k}^Ty^*)^2} \end{bmatrix}
\label{proof:joint-GPM-recurrence-final}
\end{align}
where
\begin{align*}
M = \left[ \begin{matrix} \dfrac{\bar{\lambda}_2}{\lambda^*} & \dfrac{\nu}{\lambda^*} \\[10pt] \dfrac{\nu}{s^*} & \dfrac{\bar{s}_2}{s^*} \end{matrix} \right]
, \quad
\epsilon (x,y)
=
\dfrac{\max \{ \bar{\theta}^x(x,y),\bar{\theta}^y(x,y)\}}{\sqrt{2-x^Tx^*-y^Ty^*}},
\end{align*}
and
\begin{equation*}
N(x,y)
=
\frac{\epsilon(x,y)}{\sqrt{2-x^Tx^*-y^Ty^*}}
\begin{bmatrix} 
\sqrt{\dfrac{{1-x^Tx^*}}{{1+x^Tx^*}}}
&
\sqrt{\dfrac{{1-y^Ty^*}}{{1+y^Ty^*}}} \\[10pt] 
\sqrt{\dfrac{{1-x^Tx^*}}{{1+x^Tx^*}}}
&
\sqrt{\dfrac{{1-y^Ty^*}}{{1+y^Ty^*}}}
\end{bmatrix}
.
\end{equation*}

Note that the spectral radius $\rho$ of $M$ satisfies
\begin{align*}
\rho = \frac{1}{2} \left( \frac{\bar{\lambda}_2}{\lambda^*} + \frac{\bar{s}_2}{s^*} + \sqrt{ \left( \frac{\bar{\lambda}_2}{\lambda^*} - \frac{\bar{s}_2}{s^*} \right)^2 + \frac{4\nu^2}{\lambda^* s^*} } \right) < 1
\end{align*}
due to $\nu^2 < (\lambda^* - \bar{\lambda}_2)(s^* - \bar{s}_2)$. Also, for $i,j = 1,2$, we have
\begin{equation*}
\lim_{\substack{(x,y) \rightarrow (x^*,y^*)}} N_{ij}(x,y) = 0.
\end{equation*}
By Lemma~\ref{lemma:asymptotic-spectral-property}, there exists a sequence $\omega_t$ such that 
\begin{align*}
\| M^k \| = \prod_{t=0}^{k-1} (\rho + \omega_t) \quad \text{and} \quad \text{lim}_{t \rightarrow \infty} \omega_t = 0.
\end{align*}

Let
\begin{align*}
\tau = \min \{ k: \| M^k \| < 1\}, \quad 
\bar{\rho} = \frac{\|M^\tau\|+1}{2}, \quad
\rho_{\max} = \max_{1 \leq k \leq \tau} \| M^k \|.
\end{align*}
By Lemma~\ref{lemma:GPM-x}, we have
\begin{equation*}
\begin{aligned}
\nabla_x f(x,y)^Tv_1 = \lambda^* + (y-y^*)^T\nabla_{yx}^2 f(x^*,y^*)x^* + \alpha^x(x,y) \\
\nabla_y f(x,y)^Tu_1 = s^* + (x-x^*)^T\nabla_{xy}^2 f(x^*,y^*)y^* + \alpha^y(x,y)
\end{aligned}
\end{equation*}
where
\begin{equation*}
 \alpha^x(x,y) =  o \left( \left \|
\begin{bmatrix}
x - x^* \\
y - y^*
\end{bmatrix}
\right \| \right), \quad  \alpha^y(x,y) =  o \left( \left \|
\begin{bmatrix}
x - x^* \\
y - y^*
\end{bmatrix}
\right \| \right).
\end{equation*}
Therefore, there exists some $\delta_1 > 0$ such that if 
\begin{equation*}
x^Tx^* > 0, \quad y^Ty^*>0, \quad  \left \| \begin{bmatrix} \sqrt{1 - (x^Tx^*)^2} \\[2pt] \sqrt{1 - (y^Ty^*)^2} \end{bmatrix} \right \| < \delta_1,
\end{equation*}
then
\begin{equation}
\nabla_x f(x,y)^T v_1 > 0, \quad \nabla_y f(x,y)^T u_1 > 0.
\label{proof:joint-delta1-def}
\end{equation}

Also, since $N_{ij}(x,y) \rightarrow 0$ as $(x,y) \rightarrow (x^*,y^*)$ for $i,j=1,2$, there exists some $\delta_2 > 0$ such that if
\begin{equation*}
x^Tx^* > 0, \quad y^Ty^*>0, \quad \left \| \begin{bmatrix} \sqrt{1 - (x^Tx^*)^2} \\[2pt] \sqrt{1 - (y^Ty^*)^2} \end{bmatrix} \right \| < \delta_2,
\end{equation*}
then we have
\begin{equation}
\label{proof:joint-delta2-def}
\left \| \prod_{l=0}^{\tau-1} \big( M+N(\phi(x,y,l)) \big) \right \| < \bar{\rho}, \quad \max_{0 < m \leq \tau} \left \| \prod_{l=0}^{m-1} \big( M+N(\phi(x,y,l)) \big) \right \| < 1 +  \rho_{\max}
\end{equation}
where $\phi(x,y,l)$ denotes the vector after $l$ iterations of the algorithm starting with $(x,y)$.
To see this, let us define 
\begin{equation*}
g(x,y,m) = \left \| \prod_{l=0}^{m-1} \big( M+N(\phi(x,y,l)) \big) \right \|.
\end{equation*}
By \eqref{proof:joint-GPM-recurrence-final} and \eqref{proof:joint-delta1-def}, if $x \rightarrow x^*$ and $y \rightarrow y^*$, then for any $0 \leq l \leq \tau$, we have
\begin{equation*}
\phi(x,y,l) \rightarrow (x^*,y^*),
\end{equation*}
resulting in 
\begin{equation*}
g(x,y,m) \rightarrow \| M^{m} \|.
\end{equation*}
Therefore, there exists some $\delta_{2,\tau} > 0$ such that $g(x,y,\tau) < \bar{\rho}$. Also, for each $1 \leq m < \tau$, there exists some $\delta_{2,m} > 0$ such that $g(x,y,m) < 1 + \rho_{\max}$. Taking the minimum of $\delta_{2,m}$ for $1 \leq m \leq \tau$, we obtain $\delta_2$ satisfying \eqref{proof:joint-delta2-def}.

Let
\begin{equation*}
\delta = \frac{\bar{\delta}}{\sqrt{2}}, \quad \bar{\delta} = \min \left \{ \delta_1,  \frac{\delta_1}{1 + \rho_{\max}}, \delta_2, 1 \right \}, \quad N_k = N(x_k,y_k).
\end{equation*}
By mathematical induction, we show that for any $n \geq 0$, if
\begin{equation}
\label{proof:joint-induction-base-2}
x_{n\tau}^Tx^* > 0, \quad y_{n\tau}^Ty^*>0, \quad \Delta_{n\tau} < \bar{\delta},    
\end{equation}
then for $0\leq m \leq \tau$, we have
\begin{align}
\label{proof:joint-induction-main-2}
x_{n\tau+m}^Tx^* > 0, \quad y_{n\tau+m}^Ty^*>0, \quad \Delta_{n\tau+m} \leq (1+\rho_{\max}) \Delta_{n\tau}
< \delta_1.
\end{align}
By \eqref{proof:joint-induction-base-2}, it is obvious that we have \eqref{proof:joint-induction-main-2} for $m=0$. This proves the base case. Next, suppose that we have \eqref{proof:joint-induction-main-2} for $0 \leq m<\tau$. Then, by the definition of $\delta_1$, we have
\begin{equation*}
x_{n\tau + m+1}^Tx^* = x_{n\tau + m+1}^Tv_1 = \frac{\nabla_x f(x_{n\tau + m}, y_{n\tau + m})^Tv_1}{\| \nabla_x f(x_{n\tau + m}, y_{n\tau + m}) \|} > 0
\end{equation*}
and
\begin{equation*}
y_{n\tau + m+1}^Ty^* = y_{n\tau + m+1}^Tu_1 = \frac{\nabla_y f(x_{n\tau + m}, y_{n\tau + m})^Tu_1}{\| \nabla_y f(x_{n\tau + m}, y_{n\tau + m}) \|} > 0.
\end{equation*}
Also, by \eqref{proof:joint-GPM-recurrence-final}, \eqref{proof:joint-induction-base-2} and \eqref{proof:joint-delta2-def}, we have
\begin{align*}
\Delta_{n\tau+m+1}
\leq \left \| \prod_{l=0}^m \left( M + N_{n\tau+l} \right) \right \| \Delta_{n\tau} \leq (1+ \rho_{\max}) \Delta_{n\tau} < \delta_1.
\end{align*}
This completes the induction proof.

Suppose that $(x_0,y_0)$ satisfies $\max \{|1-x_0^Tx^*|,|1-y_0^Ty^*|\} < \delta$. Then, we have
\begin{equation}
x_0^Tx^* > 0, \quad y_0^Ty^*>0, \quad \Delta_0 < \bar{\delta}.
\label{proof:joint-induction2-base}
\end{equation}
Now, we show
\begin{align}
\label{proof:joint-induction2-main}
x_{n\tau}^Tx^* > 0, \quad y_{n\tau}^Ty^*>0, \quad \Delta_{n\tau} \leq \bar{\rho}^n \Delta_{0}.
\end{align}
For $n=0$, we have \eqref{proof:joint-induction2-main} by \eqref{proof:joint-induction2-base}. This proves the base case. Next, suppose that we have \eqref{proof:joint-induction2-main} for $n$. Then, since \eqref{proof:joint-induction2-main}  implies that $\Delta_{n\tau} \leq \bar{\rho}^n \Delta_{0} < \bar{\delta}$, by \eqref{proof:joint-induction-main-2}, we have
\begin{equation*}
x_{(n+1)\tau}^Tx^* > 0, \quad y_{(n+1)\tau}^Ty^* > 0.
\end{equation*}
Moreover, using \eqref{proof:joint-GPM-recurrence-final} and \eqref{proof:joint-delta2-def}, we have
\begin{align*}
\Delta_{(n+1)\tau}
\leq \left \| \prod_{l=0}^{\tau-1} \left( M + N_{n\tau+l} \right) \right \| \Delta_{n\tau} \leq \bar{\rho} \Delta_{n\tau} < \bar{\rho}^{n+1} \Delta_0,
\end{align*}
which completes the induction proof. By repeatedly applying \eqref{proof:joint-induction2-main}, we have 
\begin{equation*}
(x_{n\tau},y_{n\tau}) \rightarrow (x^*,y^*) \, \, \text{as} \, \, n \rightarrow \infty.
\end{equation*}
Furthermore, due to \eqref{proof:joint-induction-main-2}, we have 
\begin{equation*}
(x_{n\tau+m},y_{n\tau+m}) \rightarrow (x^*,y^*) \, \, \text{for every} \, \, 0<m \leq \tau,
\end{equation*}
indicating that 
\begin{equation*}
(x_k,y_k) \rightarrow (x^*,y^*).
\end{equation*}
This in turn implies that $N_k \rightarrow 0$. Letting
\begin{equation*}
\eta_k = \frac{\| \prod_{t=0}^{k} (M+N_t) \|}{\| \prod_{t=0}^{k-1} (M+N_t) \|} - \frac{\| M^{k+1} \|}{\| M^{k} \|}, \quad \gamma_k = \omega_k + \eta_k,
\end{equation*}
we have
\begin{align}
\label{proof:joint-Delta-norm-product-bound}
\left \| \prod_{t=0}^{k-1} (M+N_t) \right \| 
= \prod_{t=0}^{k-1} (\rho + \omega_t + \eta_t) = \prod_{t=0}^{k-1} (\rho + \gamma_t).
\end{align}
Since $\eta_k \rightarrow 0$ as $N_k \rightarrow 0$, we have $\lim \gamma_k = 0$. This concludes the proof.
\end{proof}

If $x$ and $y$ are independent ($\nu = 0$), we have $\rho = \max \, \{\bar{\lambda}_2/\lambda^*, \bar{s}_2/s^* \}$. Otherwise, $\rho$ increases as $\nu$ increases. Note that the result of Theorem~\ref{thm:GPM-local-convergence} can be restored by dropping $x$ or $y$ in Theorem~\ref{thm:joint-GPM-local-convergence}. While we consider the two-block case, the algorithm and the convergence analysis can be easily generalized to more than two blocks.

\subsection{Partially Scale Invariant Problems}
\label{subsec:restricted}
Lastly, we consider a class of optimization problems of the form
\begin{equation*}
\textrm{\rm maximize} \quad f(x,y) \quad \textrm{\rm subject to} \quad x \in \partial\mathcal{B}_{d_1}
\end{equation*}
where $f(x,y) : \mathbb{R}^{d_1+d_2} \to \mathbb{R}$ is a scale invariant function in $x$ for each $y \in \mathbb{R}^{d_2}$. 
A partially scale invariant problem has the form of \eqref{prob:main} with respect to $x$ once $y$ is fixed. If $x$ is fixed, we obtain an unconstrained optimization problem with respect to $y$.
\begin{example}[Gaussian Mixture Model (GMM)]
The GMM problem is defined as
\begin{equation*}
\textrm{\rm maximize} \quad \sum_{i=1}^n \log \sum_{k=1}^d x_k^2 \, \mathcal{N}(x_i;\mu_k,\Sigma_k) \quad \textrm{\rm subject to} \quad x \in \partial \mathcal{B}_{d}.
\end{equation*}
Note that the objective function is scale invariant in $x$ for fixed $\mu_k$ and $\Sigma_k$, and $\mu_k$ is unconstrained. If we assume some structure on $\Sigma_k$, estimation of $\Sigma_k$ can also be unconstrained. For general $\Sigma_k$, semi-positive definiteness is necessary for $\Sigma_k$.
\end{example}

To solve partially scale invariant problems, we consider an alternative maximization algorithm based on SCI-PI and the gradient method as
\begin{equation} \label{alg:restricted-GPM}
x_{k+1} \leftarrow \nabla_x f(x_{k},y_k)/\| \nabla_x f(x_{k},y_k) \|, \quad y_{k+1} \leftarrow y_k + \alpha \nabla_{y} f(x_k, y_k).
\end{equation}
While the gradient method is used in \eqref{alg:restricted-GPM}, any method for unconstrained optimization can replace it. We present a convergence analysis of \eqref{alg:restricted-GPM} below.
\begin{theorem}
\label{thm:restricted-GPM-local-convergence}
Suppose that $f(x,y)$ is scale invariant in $x$ for each $y \in \mathbb{R}^{d_2}$, $\mu$-strongly concave in $y$ with an $L$-Lipschitz continuous $\nabla_y f(x,y)$ for each $x \in \partial \mathcal{B}_{d_1}$, and three-times continuously differentiable on an open set containing $\partial \mathcal{B}_{d_1} \times \mathbb{R}^{d_2}$. Let $(x^*,y^*)$ be a local maximum satisfying
\begin{equation*}
\nabla f(x^*) = {\lambda^*} x^*, \, \, \lambda^* > \bar{\lambda}_2 = {\textstyle \max_{2 \leq i \leq d}} |\lambda_i|
\end{equation*}
where $(\lambda_i,v_i)$ is an eigen-pair of $\nabla^2 f(x^*)$ with $x^* = v_1$. If 
\begin{equation*}
\nu^2 = \| \nabla_{yx}^2 f(x^*,y^*) \|^2 < \mu (\lambda^* - \bar{\lambda}_2),
\end{equation*}
then for the sequence of iterates $\{(x_k,y_k)\}_{k=0,1,\cdots}$ generated by \eqref{alg:restricted-GPM} with $\alpha = 2/(L+\mu)$, there exists some $\delta > 0$ such that if $\max \{|1-x_0^Tx^*|, \|y-y^* \|\} < \delta$,
then we have 
\begin{equation*}
\left \| 
\Delta_k
\right \| \leq
\textstyle{\prod_{t=0}^{k-1}} \left( \rho + \gamma_t \right)
\left \|
\Delta_0 \right\|
\, \,
\textup{and}
\, \,
\textup{lim}_{k \rightarrow \infty} \gamma_k = 0
\end{equation*}
where
\begin{align*}
\Delta_k
=
\begin{bmatrix}
\sqrt{1-(x_k^Tx^*)^2} \\ 
\| y_k - y^* \|
\end{bmatrix}
, \,
\rho = \frac{1}{2} \left[ \frac{\bar{\lambda}_2}{\lambda^*} + \frac{L-\mu}{L+\mu} + \sqrt{\left[ \frac{\bar{\lambda}_2}{\lambda^*} - \frac{L-\mu}{L+\mu} \right]^2 + \frac{8 \nu^2}{\lambda^* (L+\mu)}} \right] < 1.
\end{align*}
\end{theorem}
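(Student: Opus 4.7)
The plan is to mirror the structure of the proof of Theorem~\ref{thm:joint-GPM-local-convergence}: derive a matrix recurrence of the form $\Delta_{k+1} \leq (M + N_k)\Delta_k$ where $M$ has spectral radius $\rho$ and $N_k \to 0$ as $(x_k, y_k) \to (x^*, y^*)$, then invoke the same bootstrapping/induction argument (together with Lemma~\ref{lemma:asymptotic-spectral-property}) to conclude local linear convergence at rate $\rho$. The only structural difference from the block case is that the second coordinate of $\Delta_k$ is now $\|y_k - y^*\|$ rather than $\sqrt{1-(y_k^T y^*)^2}$, reflecting that $y$ is unconstrained and updated by a gradient step. First-order optimality in $y$ gives $\nabla_y f(x^*, y^*) = 0$, which serves as the anchor for the $y$-recurrence.

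For the $x$-update, I would apply Lemma~\ref{lemma:GPM-x} verbatim (with $w = x_k$, $z = y_k$) to obtain
\[
\sqrt{1-(x_{k+1}^T x^*)^2} \leq \frac{\bar{\lambda}_2}{\lambda^*}\sqrt{1-(x_k^T x^*)^2} + \frac{\nu}{\lambda^*}\|y_k - y^*\| + \bar{\theta}^x(x_k, y_k),
\]
with $\bar{\theta}^x$ of order $o\bigl(\|(x_k-x^*, y_k-y^*)\|\bigr)$, as in the proof of Theorem~\ref{thm:joint-GPM-local-convergence}. For the $y$-update, I use $\nabla_y f(x^*,y^*)=0$ together with the integral form of Taylor's theorem to write
\[
\nabla_y f(x_k, y_k) = A_k (y_k - y^*) + B_k (x_k - x^*),
\]
where $A_k = \int_0^1 \nabla_{yy}^2 f(\cdot)\,dt$ and $B_k = \int_0^1 \nabla_{yx}^2 f(\cdot)\,dt$ are averaged Hessians along the segment from $(x^*, y^*)$ to $(x_k, y_k)$. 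Strong concavity and Lipschitz continuity give $-L I \preceq A_k \preceq -\mu I$ uniformly, so with $\alpha = 2/(L+\mu)$ the usual identity yields $\|I + \alpha A_k\| \leq (L-\mu)/(L+\mu)$, while continuity gives $\|B_k\| = \nu + o(1)$. Converting $\|x_k - x^*\| = \sqrt{2/(1+x_k^T x^*)}\sqrt{1-(x_k^T x^*)^2}$ and folding the factor $\sqrt{2/(1+x_k^T x^*)} - 1 \to 0$ into a vanishing remainder leads to
\[
\|y_{k+1} - y^*\| \leq \frac{2\nu}{L+\mu}\sqrt{1-(x_k^T x^*)^2} + \frac{L-\mu}{L+\mu}\|y_k - y^*\| + \bar{\theta}^y(x_k, y_k).
\]

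Stacking the two scalar inequalities gives $\Delta_{k+1} \leq (M + N_k)\Delta_k$ with
\[
M = \begin{bmatrix} \bar{\lambda}_2/\lambda^* & \nu/\lambda^* \\[2pt] 2\nu/(L+\mu) & (L-\mu)/(L+\mu) \end{bmatrix}, \qquad N_k \to 0.
\]
The $2\times 2$ eigenvalue formula reproduces the stated $\rho$, and the inequality $\rho < 1$ reduces, via $(1-a)(1-d) > bc$, to $\mu(\lambda^* - \bar{\lambda}_2) > \nu^2$, which is exactly the hypothesis. The remaining work—choose $\tau$ so $\|M^\tau\| < 1$, pick $\delta$ small enough that any $\tau$-step product $\prod_{l=0}^{\tau-1}(M+N_{n\tau+l})$ has norm below $(\|M^\tau\|+1)/2$ on the convergence basin, induct on $n$ to show $\|\Delta_{n\tau}\|$ shrinks geometrically, then invoke Lemma~\ref{lemma:asymptotic-spectral-property} to extract the product form $\prod_{t=0}^{k-1}(\rho+\gamma_t)$ with $\gamma_t \to 0$—is formally identical to the corresponding portion of the proof of Theorem~\ref{thm:joint-GPM-local-convergence} and can be imported with $\|y_k-y^*\|$ replacing $\sqrt{1-(y_k^T y^*)^2}$ throughout.

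The main obstacle will be the $y$-recurrence: correctly identifying the averaged Hessians $A_k$ and $B_k$, verifying that the contraction constant $(L-\mu)/(L+\mu)$ survives uniformly in $x_k$ (using that strong concavity and Lipschitzness hold for every $x \in \partial\mathcal{B}_{d_1}$), and checking that the conversion from $\|x_k - x^*\|$ to $\sqrt{1-(x_k^T x^*)^2}$ contributes only an $o(\cdot)$ correction. The spectral-radius analysis of $M$ and the block-induction argument of length $\tau$ are then largely mechanical once the two scalar recurrences are in hand.
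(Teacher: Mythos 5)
Your proposal follows the paper's proof almost exactly: the paper likewise obtains the $x$-recurrence from Lemma~\ref{lemma:GPM-x} with $w=x_k$, $z=y_k$, obtains the $y$-recurrence with the constants $\tfrac{2\nu}{L+\mu}$ and $\tfrac{L-\mu}{L+\mu}$, converts $\|x_k-x^*\|$ to $\sqrt{1-(x_k^Tx^*)^2}$ up to an $o(\cdot)$ term, stacks the two inequalities into $\Delta_{k+1}\leq (M+N_k)\Delta_k$ with the same $M$, verifies $\rho<1$ from $\nu^2<\mu(\lambda^*-\bar{\lambda}_2)$, and then imports the $\tau$-step induction and Lemma~\ref{lemma:asymptotic-spectral-property} from the proof of Theorem~\ref{thm:joint-GPM-local-convergence} verbatim. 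The one substantive difference is in how the $y$-recurrence is derived. The paper packages it as Lemma~\ref{lemma:GPM-y}, which expands $\nabla_y f(x_k,y_k)$ in $x$ around $x^*$ at \emph{fixed} $y_k$, and then applies the standard strongly-concave gradient-step contraction to the map $y\mapsto y+\alpha\nabla_y f(x^*,y)$; since $x^*\in\partial\mathcal{B}_{d_1}$, the hypotheses directly supply the needed bounds $-LI\preceq\nabla_{yy}^2 f(x^*,\cdot)\preceq-\mu I$. Your joint averaged Hessian $A_k=\int_0^1\nabla_{yy}^2 f\,dt$ is instead evaluated along the segment from $(x^*,y^*)$ to $(x_k,y_k)$, whose $x$-component leaves the sphere, so the claim $-LI\preceq A_k\preceq-\mu I$ ``uniformly'' is not literally granted by the hypotheses (strong concavity and Lipschitzness are assumed only for $x\in\partial\mathcal{B}_{d_1}$). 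This is fixable --- either regroup the expansion as the paper does, or note that $A_k\to\nabla_{yy}^2 f(x^*,y^*)$ and fold the discrepancy into the vanishing remainder $N_k$, since only the limiting contraction factor matters for $\rho$ --- but as written that step needs the patch.
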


\begin{proof}
Using Lemma~\ref{lemma:GPM-x} for $w=x_k, \, z=y_k$ and the definition of $x_{k+1}$, we have
\begin{align}
\label{proof:restricted-GPM-recurrence-x}
\sqrt{1 - (x_{k+1}^Tx^*)^2} & \leq \frac{\bar{\lambda}_2}{\lambda^*} \sqrt{1-(x_k^Tx^*)^2} + \frac{\nu}{\lambda^*} \| y_k - y^* \| 
+
\theta^x(x_k,y_k).
\end{align}
where
\begin{equation*}
\theta^x(x_k,y_k) = o \left( \left \|
\begin{bmatrix}
x_k - x^* \\
y_k - y^*
\end{bmatrix}
\right \| \right).
\end{equation*}
By Lemma~\ref{lemma:GPM-y} with $w = x_k, z=y_k$, we also have
\begin{align}
\label{proof:restricted-GPM-recurrence-y}
\| y_{k+1} - y^*\| \leq \left( \frac{2\nu}{L+\mu} \right) \|x_k-x^*\| + \left( \frac{L-\mu}{L+\mu} \right) \| y_k - y^* \| + \theta^y(x_k,y_k).
\end{align}
Using
\begin{align*}
\bar{\theta}^y (x_k,y_k) = \theta^y(x_k,y_k) + \left[ \frac{1-x_k^Tx^*}{1+x_k^Tx^*+\sqrt{2(1+x_k^Tx^*)})} \right]  \sqrt{1-(x_k^Tx^*)^2} = o \left( \left \|
\begin{bmatrix}
x_k - x^* \\
y_k - y^*
\end{bmatrix}
\right \| \right),
\end{align*}
we can write \eqref{proof:restricted-GPM-recurrence-y} as
\begin{align}
\label{proof:restricted-GPM-recurrence-y-final}
\| y_{k+1} - y^*\| \leq \left( \frac{2\nu}{L+\mu} \right) \sqrt{1-(x_k^Tx^*)^2} + \left( \frac{L-\mu}{L+\mu} \right) \| y_k - y^* \| + \bar{\theta}^y (x_k,y_k).
\end{align}
Combining \eqref{proof:restricted-GPM-recurrence-x} and \eqref{proof:restricted-GPM-recurrence-y-final}, we obtain
\begin{align}
\label{proof:restricted-GPM-recurrence}
\begin{bmatrix} \sqrt{1 - (x_{k+1}^Tx^*)^2} \\[4pt] \| y_{k+1} - y^* \| \end{bmatrix} 
&\leq
\left[ \begin{matrix} \dfrac{\bar{\lambda}_2}{\lambda^*} & \dfrac{\nu}{\lambda^*} \\[10pt] \dfrac{2\nu}{L+\mu} & \dfrac{L-\mu}{L+\mu} \end{matrix} \right]
\begin{bmatrix} \sqrt{1 - (x_{k}^Tx^*)^2} \\[4pt] \| y_{k} - y^* \| \end{bmatrix}
+ 
\begin{bmatrix}
{\theta}^x (x_k,y_k) \\[5pt] \bar{\theta}^y (x_k,y_k)
\end{bmatrix} \\
&\leq
\left( M + N(x_k,y_k) \right)
\begin{bmatrix} \sqrt{1 - (x_{k}^Tx^*)^2} \\[2pt] \| y_k - y^* \| \end{bmatrix} \label{proof:restricted-GPM-recurrence-final}
\end{align}
where
\begin{align*}
M = \left[ \begin{matrix} \dfrac{\bar{\lambda}_2}{\lambda^*} & \dfrac{\nu}{\lambda^*} \\[10pt] \dfrac{2\nu}{L+\mu} & \dfrac{L-\mu}{L+\mu} \end{matrix} \right]
, \quad 
\epsilon (x,y)
=
\frac{\max \{ {\theta}^x(x,y), \bar{\theta}^y(x,y) \}}{\sqrt{1-x^Tx^*+\|y-y^*\|^2}}
\end{align*}
and
\begin{equation*}
N(x,y)
=
\frac{\epsilon(x,y)}{\sqrt{1-x^Tx^*+\|y-y^*\|^2}}
\begin{bmatrix} 
\sqrt{\dfrac{{1-x^Tx^*}}{{1+x^Tx^*}}}
&
\| y - y^* \|
\\[15pt] 
\sqrt{\dfrac{{1-x^Tx^*}}{{1+x^Tx^*}}}
&
\| y - y^* \|
\end{bmatrix}
.
\end{equation*}
Since $\nu^2 < \mu (\lambda^* - \bar{\lambda}_2)$, the spectral radius $\rho$ of $M$ satisfies
\begin{equation*}
\rho = \frac{1}{2} \left( \frac{\bar{\lambda}_2}{\lambda^*} + \frac{L-\mu}{L+\mu} + \sqrt{\left( \frac{\bar{\lambda}_2}{\lambda^*} - \frac{L-\mu}{L+\mu} \right)^2 + \frac{8 \nu^2}{\lambda^* (L+\mu)}} \right) < 1.
\end{equation*}
The rest of the proof is the same as the steps taken in the proof of Theorem~\ref{thm:joint-GPM-local-convergence}.
\end{proof}

As in the result of Theorem~\ref{thm:joint-GPM-local-convergence}, the rate $\rho$ increases as $\nu$ increases and is equal to $\max \, \{ \bar{\lambda}_2/\lambda^*, (L-\mu)/(L+\mu)\}$ when $\nu = 0$. Also, by dropping $y$, we can restore the convergence result of Theorem~\ref{thm:GPM-local-convergence}.

\section{Numerical Experiments}
\label{sec:numerical-experiments}
We test the proposed algorithms on real-world data sets. All experiments are implemented on a standard laptop (2.6 GHz Intel Core i7 processor and 16GM memory) using the Julia programming language. Let us emphasize that scale invariant problems frequently appear in many important applications in statistics and machine learning. We select 3 important applications, KL-NMF, GMM and ICA. A description of the data sets is provided below.

\subsection{Description of Data Sets}
\begin{table}[h!]
	\caption{A brief summary of data sets used for KL-NMF} \label{table:1}
	\footnotesize
	\centerline{
		\begin{tabular}{|r || r | r | r | r | r |}
		Name & \# of samples & \# of features & \# of nonzeros & Sparsity \\
		\midrule
		WIKI & 8,274 & 8,297 & 104,000 & 
		0.999 \\
		NIPS & 1,500 & 12,419 & 280,000 & 
		0.985 \\
				KOS & 3,430 & 6,906 & 950,000 &
		0.960  \\
				WT & 287 & 19,200 & 5,510,000 &
		0.000 
		\end{tabular}
		}
\end{table}
For KL divergence nonnegative matrix factorization (Section~\ref{subsec:nmf}), we use 4 public real data sets available online\footnote{These 4 data sets are retrieved from \url{https://www.microsoft.com/en-us/research/project}, \url{https://archive.ics.uci.edu/ml/datasets/bag+of+words}, and \url{https://snap.stanford.edu/data/wiki-Vote.html}} and summarized in Table~\ref{table:1}. Waving Trees (WT) has 287 images, each having $160 \times 120$ pixels. KOS and NIPS are sparse, large matrices implemented for topic modeling. WIKI is a large binary matrix having values $0$ or $1$ representing the adjacency matrix of a directed graph.
\begin{table}[h!]
	\caption{A brief summary of data sets used for GMM} \label{table:2}
	\footnotesize
	\centerline{
		\begin{tabular}{| r | r || r | r | r | r | r |}
		Name &  \# of classes & \# of samples & Dimension \\
		\midrule
		Sonar & 2 & 208 & 60 \\
	    Ionosphere & 2 &  351 & 34 \\
		HouseVotes84 & 2 & 435 & 16 \\
		BrCancer & 2& 699 & 10 \\
		PIDiabetes & 2& 768 & 8 \\
		Vehicle & 4& 846 & 18 \\
		Glass & 6 & 214 & 9 \\
		Zoo & 7 &  101 & 16\\
		Vowel & 11 & 990 & 10 \\
		Servo & 51 & 167 & 4  \\
		\end{tabular}
		}
\end{table}

For GMM (Section~\ref{subsec:gmm}), we use 10 public real data sets, corresponding to all small and moderate data sets provided by the {\tt mlbench} package in R. We select data sets for multi-class classification problems and run EM and SCI-PI for the given number of classes without class labels. In Table~\ref{table:2}, the sample size varies from $101$ to $990$, the dimension varies from $2$ to $60$, and the number of classes varies from $2$ to $51$. Only a small portion of entries are missing, if missing data exists, and we simply impute by mean.
\begin{table}[h!]
	\caption{A brief summary of data sets used for ICA} \label{table:3}
	\footnotesize
	\centerline{
		\begin{tabular}{| r || r | r | r | r | r |}
		Name & \# of samples & \# of features \\
		\midrule
		Wine & 178 & 14 \\
		Soybean & 683 &  35\\
		Vehicel & 846 & 18 \\
		Vowel & 990 &  10 \\
		Cardio & 2,126 &  22 \\
		Satellite & 6,435 &  37 \\
		Pendigits & 10,992 & 17 \\
		Letter & 20,000 &  16 \\
		Shuttle & 58,000  &  9
		\end{tabular}
		}
\end{table}

For ICA, discussed also in Section 5.3, we use 9 public data sets (see Table~\ref{table:3}) from the UCI Machine Learning repository\footnote{\url{https://archive.ics.uci.edu/ml/index.php}}. The sample size varies from $178$ to 58,000 and the dimension varies from $9$ to $37$.
\subsection{KL-divergence Nonnegative Matrix Factorization} \label{subsec:nmf}
We perform experiments on the KL-divergence NMF (KL-NMF) problem \eqref{eqn:kldiv_nmf} described in Example~\ref{eg:klnmf}. Let us recall that the original KL-NMF problem can be solved via block SCI-PI where in each iteration the algorithm solves the subproblem of the form \eqref{prob:mixsqpsubproblem}. Our focus is to compare this algorithm with other well-known alternating minimization algorithms listed below, updating $H$ and $W$ alternatively. To lighten the notation, let $\odot$, $\oslash$ and $(\cdot)^{\odot 2}$ denote element-wise product, division and square, respectively. We let $z = V \oslash (Wh)$ and $\mathds{1}_n$ denote a vector of ones. 
\begin{itemize}
\item Projected gradient descent (PGD): It iterates $h^{\rm new} \leftarrow h - \eta \odot W^T(z - \mathds{1}_n)$ followed by projection onto the simplex, where $\eta \propto h$ is an appropriate learning rate \citep{lin2007projected}.
\item Multiplicative update (MU): A famous multiplicative update algorithm is originally suggested by \citep{lee2001algorithms}, which iterates $h^{\rm new} \leftarrow h \odot (W^Tz)\oslash (W^T \mathds{1}_n)$ and is learning rate free.
\item Our method (SCI-PI): It iterates $h^{\rm new} \leftarrow h \odot ({\sigma} + W^T z)^{\odot 2}$ and rescales $h$, where $\sigma$ is a shift parameter. We simply use $\sigma = 1$ for preconditioning.
\item Sequential quadratic programming (MIXSQP): It exactly solves each subproblem via a convex solver {\tt mixsqp} \citep{kim2018fast}. This algorithm performs sequential non-negative least squares.
\end{itemize}

\paragraph{KL-NMF Subproblem} Note that the KL-NMF subproblem \eqref{prob:mixsqpsubproblem} has exactly the same form of the estimation of mixture proportions \eqref{eqn:mixture-reform} described in the Example~\ref{eg:mixture}.

\begin{figure}[h]
    \centering
    \centerline{\includegraphics[width=6in]{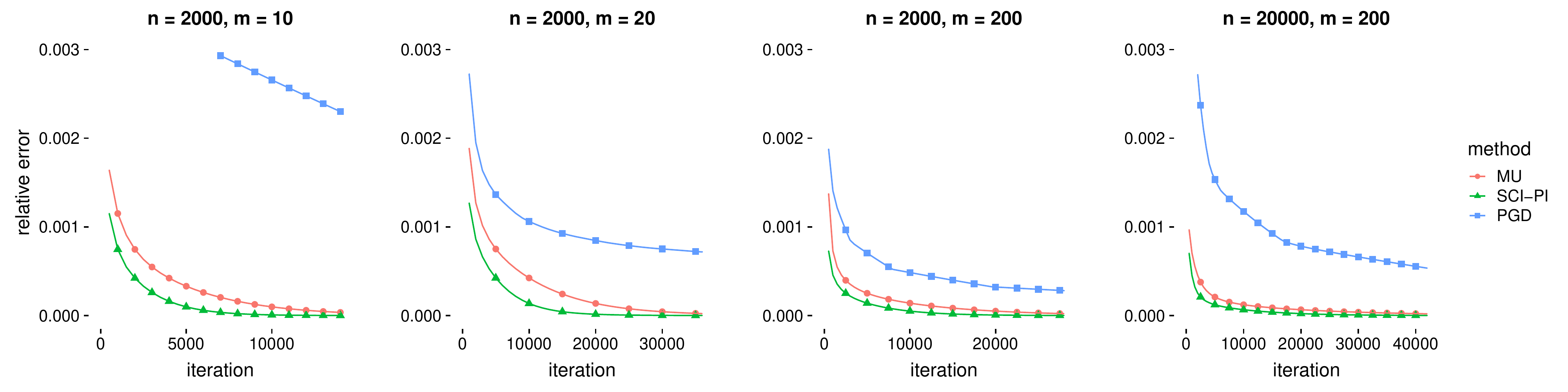}}
    \caption{Convergence of 3 algorithms for the KL-NMF subproblem. $n,m$ : the number of samples/features of the data matrix.}
    \label{fig:mixprop}
\end{figure}

To study the convergence rate for the KL-NMF subproblems, we use the 4 data sets studied in \citet{kim2018fast}. We study MU, PGD and SCI-PI since they have the same order of computational complexity per iteration, but omit MIXSQP since it is a second-order method which cannot be directly compared. For PGD, the learning rate is optimized by grid search. The stopping criterion is $\| f(x_k) - f^* \| \leq 10^{-6} f^*$ where $f^*$ is the solution obtained by MIXSQP after extensive computation time. The average runtime for aforementioned 3 methods are 33, 33 and 30 seconds for 10,000 iterations, respectively. The result is shown in Figure~\ref{fig:mixprop}\footnote{\label{note1}For each evaluation, we randomly draw 10 initial points and report the averaged relative errors with respect to $f^*$. The initial input for the KL-NMF problem is a one-step MU update of a Unif$(0,1)$ random matrix.}. It shows that SCI-PI outperforms the other 2 for all simulated data sets. Also, all methods seems to exhibit linear convergence.

\paragraph{KL-NMF on Real world data sets} Next, we test the 4 algorithms on the data sets in Table~\ref{table:1}. We estimate $k = 20$ factors. At each iteration, all 4 algorithms solve $m$ subproblems simultaneously for $W$ and then alternatively for $H$.

\begin{figure}[t]
    \centering
    \centerline{\includegraphics[width=5.5in]{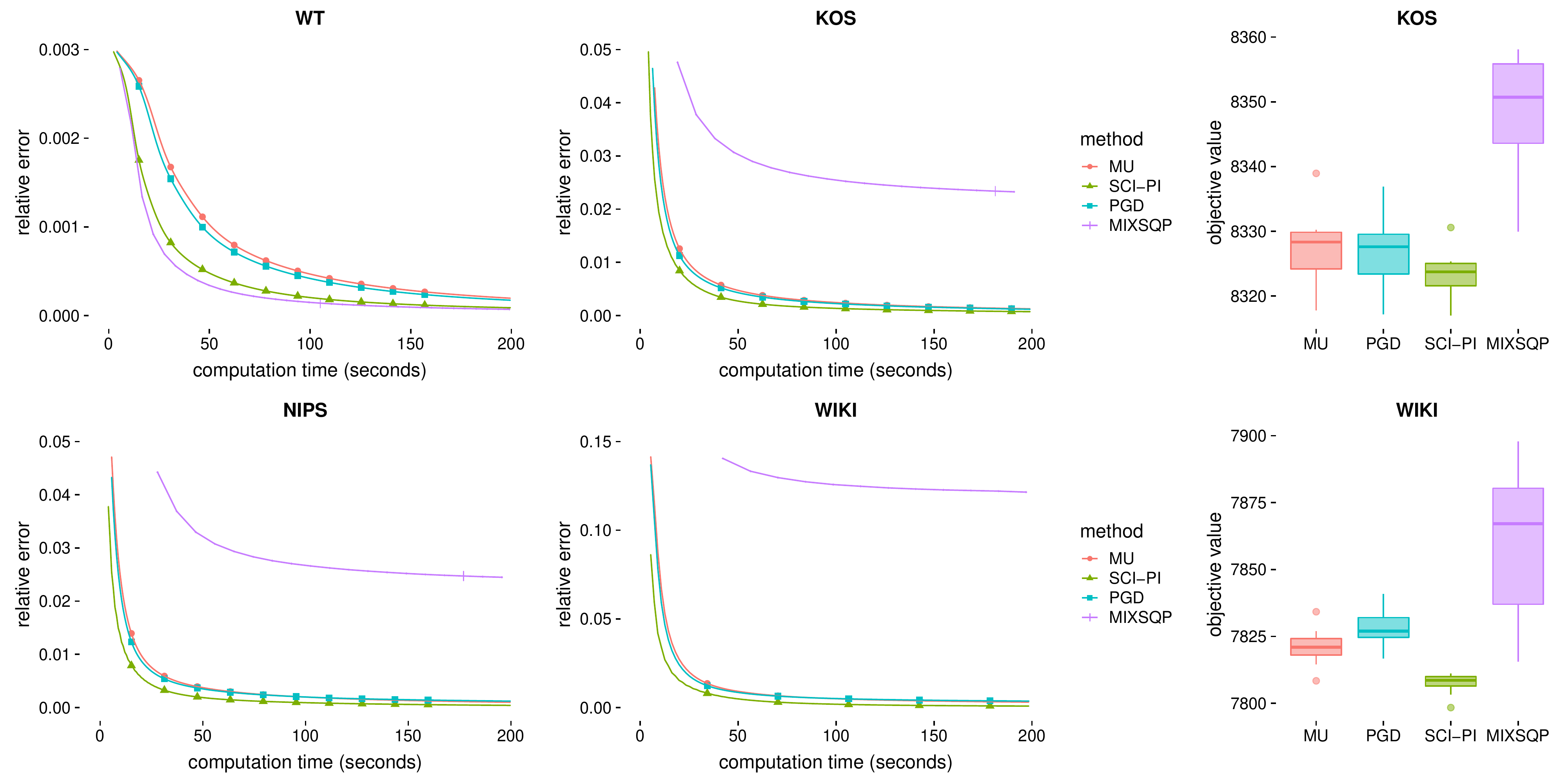}}
    \caption{{\it (Left and center)} Convergence of the 4 NMF algorithms. {\it (Right)} Boxplots containing 10 objective values achieved after 400 seconds.}
    \label{fig:nmf}
\end{figure}

The result is summarized in Figure~\ref{fig:nmf}\footnote{In all plots we do not show the first few iterations. The initial random solutions have the gap of approximately 50\% which drops to a few percent after 10 iterations where the plots start.}. The convergence plots are based on the average relative errors over 10 repeated runs with random initializations. The result shows that SCI-PI is an overall winner, showing faster convergence rates. The stopping criterion is the same as above. To assess the overall performance when initialized differently, we select KOS and WIKI and run MU, PGD, SCI-PI, and MIXSQP 10 times\footnoteref{note1}. The 3 algorithms except MIXSQP have (approximately) the same computational cost per iteration, take runtime of 391, 396, 408 seconds for KOS data and 372, 390, 418 seconds for WIKI data, respectively for 200 iterations. MIXSQP has a larger per iteration cost. After 400 seconds, SCI-PI achieves lowest objective values in all cases but one for each data set (38 out of 40 in total). Thus it clearly outperforms other methods and also achieves the lowest variance. Unlike the other 3 algorithms, SCI-PI is not an ascent algorithm but an eigenvalue-based fixed-point algorithm. We observe that sometimes SCI-PI converges to a better solution due to this fact. Admittedly, non-monotone convergence of SCI-PI can hurt reliability of the solution but for the KL-NMF problem its performance turns out to be stable.

\subsection{Gaussian Mixture Model and Independent Component Analysis}

In this subsection, we study the empirical performance of SCI-PI when it is applied to GMM and ICA.

\paragraph{GMM} \label{subsec:gmm}
\begin{figure}[h]
    \centering
    \centerline{\includegraphics[width=5.5in]{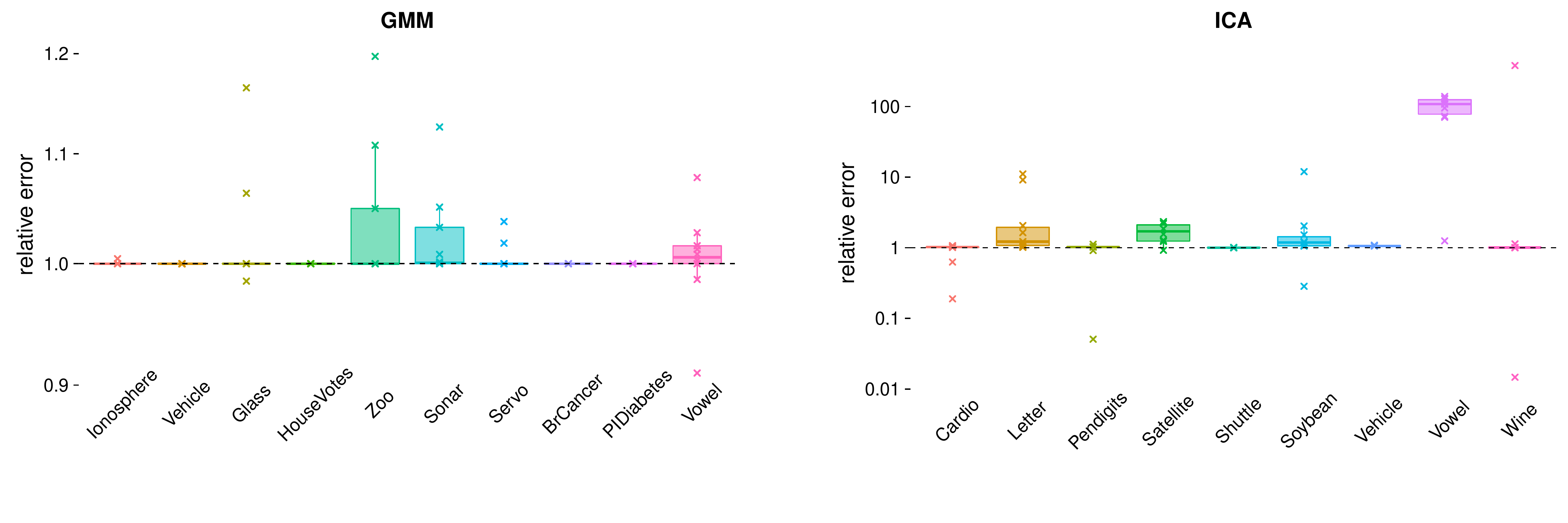}}
    \caption{The relative error $f_{\textrm{SCI-PI}}^* / f_{\textrm{EM}}^*$ for GMM {\it (Left)} and $f_{\textrm{SCI-PI}}^* / f_{\textrm{FastICA}}^*$ for ICA {\it (Right)}.}
    \label{fig:gmm}
\end{figure}

GMM fits a mixture of Gaussian distributions to the underlying data. Let $L_{ik} = \mathcal{N}(x_i;\mu_k,\Sigma_k)$ where $i$ is the sample index and $k$ the cluster index and let $\pi$ be the actual mixture proportion vector. GMM fits into our restricted scale invariant setting (Section~\ref{subsec:restricted}) with reparametrization, but the gradient update for $\mu_k,\Sigma_k$ is replaced by the exact coordinate ascent step. The EM and SCI-PI updates for $\pi$ can be written respectively as 
\baa
r = \mathds{1} \oslash (L\pi),\quad \pi_k^{\rm new} \propto \pi \odot (L^T r)\quad 
\textrm{(EM)},\quad \pi_k^{\rm new} \propto \pi \odot (\alpha + L^Tr)^{\odot 2} \quad \textrm{(SCI-PI).}
\eaa
We compare SCI-PI and EM for different real-world data sets from Table~\ref{table:2}. All the algorithms initialize from the same standard Gaussian random variable, repeatedly for 10 times. The result is summarized in the left panel in Figure~\ref{fig:gmm}. The stopping criterion is $\|x_{k+1} - x_k \| < 10^{-8}$. In some cases, SCI-PI achieves much larger objective values even if initialized the same. In many cases the 2 algorithms exhibit the same performance. This is because estimation of $\mu_k$'s and $\Sigma_k$'s are usually harder than estimation of $\pi$, and EM and SCI-PI have the same updates for $\mu$ and $\Sigma$. For a few cases EM outperforms SCI-PI. Let us mention that SCI-PI and EM have the same order of computational complexity and require 591 and 590 seconds of total computation time, respectively.
\paragraph{ICA} \label{subsec:ica}
We implement SCI-PI on the Kurtosis-based ICA problem \citep{hyvarinen2004independent} and compare it with the benchmark algorithm FastICA \citep{hyvarinen1999fast}, which is the most popular algorithm. Given a pre-processed\footnote{A centered matrix $\widetilde{W} = n^{1/2}UDV^T$ is pre-processed by $W = \widetilde{W} VD^{-1} V^T$ so that $W^TW = nVV^T$.} data matrix $W \in \mathbb{R}^{n\times d}$, we seek to maximize an approximated negative entropy $f(x) = \sum_{i=1}^n \left[(w_i^T x)^4 - 3\right]^2$ subject to $x \in \partial\mathcal{B}_d$, for maximizing Kurtosis-based non-Gaussianity \citep{hyvarinen2000independent}. This problem fits into the sum of scale invariant setting (Section~\ref{subsec:sumofscaleinv}). SCI-PI iterates $x_{k+1} \leftarrow W^T[(Wx_k)^{\odot 4} - 3\mathds{1}_n) \odot (Wx_k)^{\odot 3}]$ and FastICA iterates $x_{k+1} \leftarrow W^T (Wx_k)^{\odot 3} - 3(\mathds{1}^T (Wx_k)^{\odot 2})x_k$, both followed by normalization. 

In Figure~\ref{fig:gmm} (right panel), we compare SCI-PI and FastICA on the data sets in Table~\ref{table:3}. The majority of data points (81 out of 100 in total) show that SCI-PI tends to find a better solution with a larger objective value, but in a few cases SCI-PI converges to a sub-optimal point. Both algorithms are fixed-point based and thus have no guarantee of global convergence but overall SCI-PI outperforms FastICA. SCI-PI and FastICA have the same order of computational complexity and require 11 and 12 seconds of total computation time, respectively.

\section{Final Remarks}
In this paper, we propose a new class of optimization problems called the scale invariant problems, together with a generic solver SCI-PI, which is indeed an eigenvalue-based fixed-point iteration. We showed that SCI-PI directly generalizes power iteration and enjoys similar properties such as that SCI-PI has local linear convergence under mild conditions and its convergence rate is determined by eigenvalues of the Hessian matrix at a solution. Also, we extend scale invariant problems to problems with more general settings. We show by experiments that SCI-PI can be a competitive option for numerous important problems such as KL-NMF, GMM and ICA. Finding more examples and extending SCI-PI further to a more general setting is a promising direction for future studies.

\newpage
\onecolumn
\appendix
\section{Additional Lemmas}
On several occasions, we use if $x \in \partial B_d, \, y \in \partial B_d$, then 
\begin{equation*}
\| x - y\|^2 = \| x \|^2 + \|y \|^2 - 2x^Ty = 2(1-x^Ty).
\end{equation*}
Note that if $x^Ty \geq 0 $, then 
\begin{equation*}
\sqrt{1-(x^Ty)^2} = \sqrt{(1-x^Ty)(1+x^Ty)} \geq \sqrt{1-x^Ty} = \frac{\| x- y\|}{\sqrt{2}}.
\end{equation*}
By Cauchy-Schwarz, we also have
\begin{equation*}
\sqrt{1-(x^Ty)^2} = \sqrt{(1-x^Ty)(1+x^Ty)} \leq \sqrt{2} \sqrt{1-x^Ty} = \| x-y\|.
\end{equation*}
\subsection{For the Proofs of Theorem~\ref{thm:GPM-local-convergence} and Theorem~\ref{thm:SCI-PI-local-convergence-SSI}}
\begin{lemma}
\label{lemma:SCI-PI-local-convergence-small-delta}
Let $\{v_1,\ldots,v_d \}$ be an orthogonal basis in $\mathbb{R}^d$ with $x^* = v_1$ and $\{x_k\}_{k=0,1,\cdots}$ be the sequence of iterates generated by SCI-PI. If for every $x \in \partial \mathcal{B}_d$ we have
\begin{align}
\label{lemma1:condition}
\nabla f(x)^Tv_1 = \lambda^* + \alpha(x), \quad \sum_{i=2}^d (\nabla f(x)^T v_i)^2 \leq
\left(
\bar{\lambda}_2 \|x-x^*\| + \beta(x)
\right)^2
\end{align}
where 
\begin{equation*}
\alpha(x) = o(\sqrt{\|x-x^*\|}), \quad \beta(x) = o(\|x-x^*\|),
\end{equation*}
then there exists some $\delta > 0$ such that under the initial condition $1-x_0^Tx^* < \delta$, we have
\begin{align*}
1 - (x_k^Tx^*)^2
\leq 
\prod_{t=0}^{k-1} \bigg( \frac{\bar{\lambda}_2}{\lambda^*} + \gamma_t \bigg)^{2} \left( 1 - (x_0^Tx^*)^2 \right), \, \, \frac{\bar{\lambda}_2}{\lambda^*} + \gamma_t < 1, \, \, \textup{and} \, \, \lim_{k \rightarrow \infty} \gamma_k = 0.
\end{align*}
\end{lemma}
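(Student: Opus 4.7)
The plan is to establish a one-step contraction of the form $\sqrt{1-(x_{k+1}^Tx^*)^2}\leq(\bar\lambda_2/\lambda^*+\gamma_k)\sqrt{1-(x_k^Tx^*)^2}$ and iterate it, using an induction to keep the iterates in a neighborhood of $x^*$. Since $\{v_i\}_{i=1}^d$ is an orthonormal basis with $x^*=v_1$ and $x_{k+1}=\nabla f(x_k)/\|\nabla f(x_k)\|$, Parseval gives
\[
1-(x_{k+1}^Tx^*)^2 \;=\; 1-(x_{k+1}^Tv_1)^2 \;=\; \frac{\sum_{i=2}^d(\nabla f(x_k)^Tv_i)^2}{\|\nabla f(x_k)\|^2}.
\]
Using the second inequality in \eqref{lemma1:condition} to bound the numerator, and $\|\nabla f(x_k)\|^2\geq(\nabla f(x_k)^Tv_1)^2=(\lambda^*+\alpha(x_k))^2$ via the first, I would obtain
\[
\sqrt{1-(x_{k+1}^Tx^*)^2}\;\leq\;\frac{\bar\lambda_2\|x_k-x^*\|+\beta(x_k)}{\lambda^*+\alpha(x_k)},
\]
once the denominator is positive and the numerator is nonnegative (both automatic for $x_k$ near $x^*$).

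Next I would convert this into a multiple of $\sqrt{1-(x_k^Tx^*)^2}$ using $\|x_k-x^*\|=\sqrt{2/(1+x_k^Tx^*)}\,\sqrt{1-(x_k^Tx^*)^2}$, valid whenever $x_k^Tx^*>-1$. Factoring $\sqrt{1-(x_k^Tx^*)^2}$ out of the bound and using that $\beta(x_k)=o(\|x_k-x^*\|)$, $\sqrt{2/(1+x_k^Tx^*)}\to 1$, and $\alpha(x_k)\to 0$ as $x_k\to x^*$, the prefactor can be written as $\bar\lambda_2/\lambda^*+\gamma(x_k)$ for some function $\gamma$ with $\gamma(x)\to 0$ as $x\to x^*$.

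The main obstacle is ensuring that the iterates stay in a neighborhood where the denominator remains positive and the contraction factor is uniformly strictly below one. Since $\bar\lambda_2/\lambda^*<1$ and both $\alpha$ and $\gamma$ vanish at $x^*$, I would choose $\epsilon_0>0$ so that on the cap $C=\{x\in\partial\mathcal{B}_d:x^Tx^*>0,\;1-(x^Tx^*)^2<\epsilon_0\}$ one has $\lambda^*+\alpha(x)>0$ and $\bar\lambda_2/\lambda^*+\gamma(x)\leq\rho$ for some fixed $\rho<1$. Taking $\delta=\min\{\epsilon_0/2,1\}$, the hypothesis $1-x_0^Tx^*<\delta$ yields $x_0^Tx^*>1-\delta>0$ and $1-(x_0^Tx^*)^2=(1-x_0^Tx^*)(1+x_0^Tx^*)<2\delta\leq\epsilon_0$, so $x_0\in C$. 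I would then induct: whenever $x_k\in C$ the one-step bound applies, positivity of $\nabla f(x_k)^Tv_1=\lambda^*+\alpha(x_k)$ forces $x_{k+1}^Tx^*>0$, and $1-(x_{k+1}^Tx^*)^2\leq\rho^2(1-(x_k^Tx^*)^2)<\epsilon_0$ keeps $x_{k+1}\in C$. Multiplying the one-step bounds gives the claimed product estimate, and since $1-(x_k^Tx^*)^2\to 0$ forces $x_k\to x^*$, continuity of $\gamma$ yields $\gamma_k=\gamma(x_k)\to 0$.
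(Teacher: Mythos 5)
Your proposal is correct and follows essentially the same route as the paper's proof: the same ratio bound $1-(x_{k+1}^Tx^*)^2 \leq \sum_{i=2}^d(\nabla f(x_k)^Tv_i)^2/(\nabla f(x_k)^Tv_1)^2$, the same conversion of $\|x_k-x^*\|$ into a multiple of $\sqrt{1-(x_k^Tx^*)^2}$, and the same induction keeping the iterates in a cap where the denominator is positive and the contraction factor is uniformly below one. The only cosmetic difference is that you reach the key ratio via Parseval plus lower-bounding $\|\nabla f(x_k)\|^2$ by $(\nabla f(x_k)^Tv_1)^2$, whereas the paper uses $1-(x_{k+1}^Tv_1)^2 \leq (1-(x_{k+1}^Tv_1)^2)/(x_{k+1}^Tv_1)^2$; these yield the identical inequality.
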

\begin{proof}
By \eqref{lemma1:condition} for every $x \in \partial \mathcal{B}_d$, we have
\begin{equation*}
\frac{\sum_{i=2}^d (\nabla f(x)^Tv_i)^2}{(\nabla f(x)^Tv_1)^2} \leq \left(
\frac{\bar{\lambda}_2 \|x-x^*\| + \beta(x)}{\lambda^* + \alpha(x)}
\right)^2.
\end{equation*}
Let
\begin{align*}
\frac{\bar{\lambda}_2 \|x-x^*\| + \beta(x)}{\lambda^* + \alpha (x)}
&= \frac{\bar{\lambda}_2}{\lambda^*} \|x-x^*\| + \theta(x).
\end{align*}
Then, we have $\theta(x) = o(\|x-x^*\|)$ and
\begin{align}
\label{lemma1:recurrence-relation-2}
\frac{\sum_{i=2}^d (\nabla f(x)^Tv_i)^2}{(\nabla f(x)^Tv_1)^2}
\leq
\left( 
\frac{\bar{\lambda}_2}{\lambda^*} + \frac{ \theta(x)}{\|x-x^*\|}  
\right)^2 \|x - x^* \|^2.
\end{align}
Letting
\begin{align}
\label{lemma1:limit-zero-terms}
{\epsilon(x)} = \frac{ \theta(x)}{\|x-x^*\|},
\end{align}
we can further represent \eqref{lemma1:recurrence-relation-2} as
\begin{align}
\frac{\sum_{i=2}^d (\nabla f(x)^Tv_i)^2}{(\nabla f(x)^Tv_1)^2} &\leq
\left( 
\frac{\bar{\lambda}_2}{\lambda^*} + {\epsilon}(x)
\right)^2 \left( 1 + \frac{1-x^Tx^*}{1+x^Tx^*} \right) \left( 1 - (x^Tx^*)^2 \right) \nonumber \\
&= \left( 
\frac{\bar{\lambda}_2}{\lambda^*} + \gamma(x) 
\right)^2
\left( 1 - (x^Tx^*)^2 \right) \label{lemma1:opt-bound-inequality}
\end{align}
where
\begin{align}
\label{lemma1:def-epsilon}
\gamma(x) = \frac{\bar{\lambda}_2}{\lambda^*} \left( \frac{1-x^Tx^*}{1+x^Tx^*+\sqrt{2(1+x^Tx^*)}} \right) + \epsilon(x) \sqrt{ 1 + \frac{1-x^Tx^*}{1+x^Tx^*}}.
\end{align}

From \eqref{lemma1:condition}, there exists some $\delta_1 > 0$ such that if $1-x^Tx^*<\delta_1$, then 
\begin{align}
\label{lemma1:delta1-bound}
\nabla f(x)^Tv_1 > 0.
\end{align}
Also, by \eqref{lemma1:limit-zero-terms}, for any $\bar{\gamma}>0$ satisfying
\begin{align}
\label{lemma1:ratio}
\frac{\bar{\lambda}_2}{\lambda^*} + \bar{\gamma} < 1,
\end{align}
there exists some constant $\delta_2 > 0$ such that if $1-x^Tx^*<\delta_2$, then
\begin{align}
\label{lemma1:delta2-bound}
|{\epsilon}(x)| \leq \frac{\bar{\gamma}}{4}.
\end{align}

Let $\delta = \min \{\delta_1, \delta_2, \frac{\lambda^*}{\bar{\lambda}_2} \bar{\gamma}, 1 \}$. Before proving the main result, we first 
show the following two statements:
\vspace{2mm}
\begin{enumerate}[wide, labelwidth=!, labelindent=0pt]
    \item If  $1 - x_k^Tx^* < \delta$, then we have
    \begin{equation}
    \label{lemma1:sub-result1}
    x_{k+1}^Tx^* > 0, \, \, 1 - (x_{k+1}^Tx^*)^2 \leq  \left( \frac{\bar{\lambda}_2}{\lambda^*} + {\gamma}_k \right)^2 \left( 1-(x_{k}^Tx^*)^2 \right), \, \,
    \textup{and} \, \, \gamma_k \leq \bar{\gamma}.
    \end{equation}
    Since $\delta < 1$, we have $x_k^Tx^* > 0$. 
    Also, from $1-x_k^Tx^* < \delta_1$ and $x^* = v_1$, using the update rule of SCI-PI and \eqref{lemma1:delta1-bound}, we obtain
    \begin{equation*}
    x_{k+1}^Tx^* = \frac{\nabla f(x_k)^Tx^*}{\|\nabla f(x_k) \|} = \frac{\nabla f(x_k)^Tv_1}{\|\nabla f(x_k) \|} > 0.
    \end{equation*}
    On other the hand, since $|x_{k+1}^Tv_1| \leq \|x_{k+1}\| \|v_{1}\|=1$, we have
    \begin{align*}
    1 - (x_{k+1}^Tx^*)^2 \leq \frac{1 - (x_{k+1}^Tv_1)^2}{(x_{k+1}^Tv_1)^2}.
    \end{align*}
    Also, from the fact that $\{v_1,\ldots,v_d \}$ forms an orthogonal basis in $\mathbb{R}^d$, we have $\nabla f(x_k) = \sum_{i=1}^d (\nabla f(x_k)^Tv_i) v_i$ and $\| \nabla f(x_k) \|^2 = \sum_{i=1}^d (\nabla f(x_k)^Tv_i)^2$. Using the update rule of SCI-PI, we have
    \begin{align*}
    \frac{1 - (x_{k+1}^Tv_1)^2}{(x_{k+1}^Tv_1)^2} 
    = \frac{\|\nabla f(x_k) \|^2- (\nabla f(x_{k})^Tv_1)^2}{(\nabla f(x_{k})^Tv_1)^2}
    = \frac{\sum_{i=2}^d (\nabla f(x_k)^Tv_i)^2}{(\nabla f(x_{k})^Tv_1)^2},
    \end{align*}
    resulting in
    \begin{align*}
    1 - (x_{k+1}^Tx^*)^2 \leq \frac{\sum_{i=2}^d (\nabla f(x_k)^Tv_i)^2}{(\nabla f(x_{k})^Tv_1)^2}.
    \end{align*}
    Let $\gamma_k = \gamma(x_k)$ and $\epsilon_k = \epsilon(x_k)$. Since $x_k^Tx^* > 0$ and $1-x_k^Tx^* < \min \{\delta_2, \frac{\lambda^*}{\bar{\lambda}_2} \bar{\gamma} \}$, from \eqref{lemma1:def-epsilon}, we have
    \begin{align*}
    \gamma_k = \frac{\bar{\lambda}_2}{\lambda^*} \left( \frac{1-x_k^Tx^*}{1+x_k^Tx^*+\sqrt{2(1+x_k^Tx^*)}} \right) + {\epsilon}_k \sqrt{ 1 + \frac{1-x_k^Tx^*}{1+x_k^Tx^*}} \leq \frac{\bar{\gamma}}{2} + \frac{\bar{\gamma}}{2} = \bar{\gamma},
    \end{align*}

    \item Using mathematical induction, we show that if 
    \begin{equation}
    \label{lemma1:induction-initial}    
    1 - x_0^Tx^* < \delta,
    \end{equation}
    then, for all $k \geq 0$. we have
    \begin{equation}
    \label{lemma1:induction-main}
    1 - x_k^Tx^* < \delta.
    \end{equation}
    By \eqref{lemma1:induction-initial}, we have $1-x_0^Tx^* < \delta$, which shows the base case.
    Next, suppose that $1 - x_k^Tx^* < \delta$ holds. Then, we have \eqref{lemma1:sub-result1}. Also, from $\delta < 1$, we have $x_k^Tx^* > 0$. Since
    \begin{equation*}
    x_{k+1}^Tx^* > 0, \, \, x_k^Tx^* > 0, \, \, 1-(x_{k+1}^Tx^*)^2 \leq \left( \frac{\bar{\lambda}_2}{\lambda^*} + \bar{\gamma} \right)^2 \left( 1-(x_k^Tx^*)^2 \right) < 1-(x_k^Tx^*)^2
    \end{equation*}
    we have 
    \begin{align*}
    1 - x_{k+1}^Tx^* < 1 - x_{k}^Tx^* < \delta,
    \end{align*}
    which completes the induction proof.
\end{enumerate}
\vspace{2mm}

Now, we prove the main statement. Since \eqref{lemma1:induction-main} holds for all $k \geq 0$, we can repeatedly apply \eqref{lemma1:sub-result1} to obtain
\begin{equation*}
1 - (x_{k}^Tx^*)^2 
\leq 
\prod_{t=0}^{k-1} \bigg( \frac{\bar{\lambda}_2}{\lambda^*} + \gamma_t \bigg)^{2} \left( 1 - (x_0^Tx^*)^2 \right),
\, \, \textup{and} \quad 
\frac{\bar{\lambda}_2}{\lambda^*} + {\gamma}_k \leq \frac{\bar{\lambda}_2}{\lambda^*} + \bar{\gamma} \leq 1.
\end{equation*}
Since 
\begin{align}
\label{lemma1:sub-result2}
1 - (x_{k}^Tx^*)^2 
< 
\left( \frac{\bar{\lambda}_2}{\lambda^*} + \bar{\gamma} \right)^{2k} \left( 1-(x_{0}^Tx^*)^2 \right),
\end{align}
we have $(x_k^Tx^*)^2 \rightarrow 1$. Moreover, from that $x_k^Tx^* > 0$ for all $k\geq 0$ by \eqref{lemma1:induction-main}, we have $x_k \rightarrow x^*$, and thus $\lim_{k \rightarrow \infty} \gamma_k = 0$ by \eqref{lemma1:def-epsilon}. With \eqref{lemma1:sub-result2}, this gives the desired result.
\end{proof}
\begin{lemma}
\label{lemma:SCI-PI-local-convergence-explicit-delta}
Let $\{v_1,\ldots,v_d \}$ be an orthogonal basis in $\mathbb{R}^d$. If $x^* = v_1$ and a sequence of iterates $\{x_k\}_{k=0,1,\cdots}$ generated by SCI-PI satisfies
\begin{align}
\label{lemma2:condition-v1}
\nabla f(x_k)^Tv_1 \geq A - B (1-x_k^Tx^*) - C \sqrt{1-x_k^Tx^*}
\end{align}
and
\begin{align}
\label{lemma2:condition-vi}
\sum_{i=2}^d (\nabla f(x_k)^T v_i)^2 \leq \left( D \sqrt{1-(x_k^Tx^*)^2} + E \sqrt{2(1-x_k^Tx^*)} + \frac{F}{2} \|x_k-x^*\|^2 \right)^2
\end{align}
where $A>0$ and $B,C,D,E,F$ are non-negative real numbers such that 
\begin{equation*}
B+C>0, \quad \frac{D+E}{A} < 1.
\end{equation*}
Then, under the initial condition that $1- x_0^Tx^* < \delta$ where
\begin{align}
\delta = \min \left \{ \left(\frac{A}{B+C}\right)^2, \left(\frac{A-D-E}{B+C+E+F}\right)^2 , 1 \right \},
\end{align}
we have
\begin{align*}
1 - (x_k^Tx^*)^2
\leq 
\prod_{t=0}^{k-1} \bigg( \frac{D+E}{A} + \gamma_t \bigg)^{2} \left( 1 - (x_0^Tx^*)^2 \right), \frac{D+E}{A} + \gamma_t < 1,  \, \textup{and} \lim_{k \rightarrow \infty} \gamma_k = 0.
\end{align*}
\end{lemma}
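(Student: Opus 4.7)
This statement is the quantitative analogue of Lemma~\ref{lemma:SCI-PI-local-convergence-small-delta}: the little-$o$ hypotheses are replaced by explicit polynomial estimates in $\sqrt{1-x_k^T x^*}$ and $\|x_k-x^*\|$, which is exactly what lets us read off an explicit formula for $\delta$. The plan is therefore to follow the same template as Lemma~\ref{lemma:SCI-PI-local-convergence-small-delta} but to track concrete coefficients throughout. As before, orthogonality of $\{v_1,\ldots,v_d\}$ together with the SCI-PI update gives
\begin{equation*}
1-(x_{k+1}^T x^*)^2 \leq \frac{\sum_{i\geq 2}(\nabla f(x_k)^T v_i)^2}{(\nabla f(x_k)^T v_1)^2}
\end{equation*}
whenever $\nabla f(x_k)^T v_1>0$, and I would then reduce \eqref{lemma2:condition-v1}--\eqref{lemma2:condition-vi} to a clean scalar recursion in $s_k=\sqrt{1-x_k^T x^*}$.

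\textbf{Scalar recursion.} Taking a square root in \eqref{lemma2:condition-vi} and using $\|x_k-x^*\|=\sqrt{2}\,s_k$, the numerator bound is $D\sqrt{1-(x_k^T x^*)^2}+E\|x_k-x^*\|+F s_k^2$. Dividing by $\sqrt{1-(x_k^T x^*)^2}$ and using (for $x_k^T x^*\geq 0$) the elementary inequalities $\|x_k-x^*\|/\sqrt{1-(x_k^T x^*)^2}=\sqrt{2/(1+x_k^T x^*)}\leq 1+s_k$ and $s_k^2/\sqrt{1-(x_k^T x^*)^2}=\sqrt{(1-x_k^T x^*)/(1+x_k^T x^*)}\leq s_k$ collapses the numerator to $\sqrt{1-(x_k^T x^*)^2}\,[D+E+(E+F)s_k]$. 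On the denominator side, \eqref{lemma2:condition-v1} together with $1-x_k^T x^*\leq s_k$ gives $\nabla f(x_k)^T v_1\geq A-(B+C)s_k$. Combining the two,
\begin{equation*}
\sqrt{1-(x_{k+1}^T x^*)^2}\leq \frac{D+E+(E+F)s_k}{A-(B+C)s_k}\,\sqrt{1-(x_k^T x^*)^2}=\Big(\tfrac{D+E}{A}+\gamma_k\Big)\sqrt{1-(x_k^T x^*)^2},
\end{equation*}
which is the desired one-step contraction with $\gamma_k$ defined by equating the leading factor to $(D+E)/A+\gamma_k$.

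\textbf{Role of $\delta$ and induction.} The two explicit terms in $\delta$ match exactly the two requirements of the recursion. First, $s_k^2<(A/(B+C))^2$ keeps $A-(B+C)s_k>0$, so the bound above is well defined and $x_{k+1}^T x^*>0$. Second, clearing the positive denominator shows that $s_k^2<((A-D-E)/(B+C+E+F))^2$ is equivalent to $(D+E)+(E+F)s_k<A-(B+C)s_k$, i.e., to the contraction factor being strictly less than $1$; a direct computation shows equality precisely at $s_k=(A-D-E)/(B+C+E+F)$, so this choice is tight. Taking $\delta$ to be the minimum of these two and $1$ (to guarantee $x_k^T x^*>0$), a short induction mirroring Lemma~\ref{lemma:SCI-PI-local-convergence-small-delta} gives $1-x_k^T x^*<\delta$ for all $k$: if the bound holds at $k$, positivity of $x_{k+1}^T x^*$ plus the strict contraction yield $(x_{k+1}^T x^*)^2>(x_k^T x^*)^2$, hence $1-x_{k+1}^T x^*<1-x_k^T x^*<\delta$. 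Iterating the one-step bound produces the product form, the geometric decay of $s_k$ gives $x_k\to x^*$, and therefore $\gamma_k\to 0$. The only genuinely non-routine point is the algebraic verification of tightness at $s_k=(A-D-E)/(B+C+E+F)$; everything else is careful bookkeeping of the three elementary inequalities above, which is what folds all higher-order terms into the clean coefficients $E+F$ in the numerator and $B+C$ in the denominator.
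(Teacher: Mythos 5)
Your proposal is correct and follows essentially the same route as the paper's proof: the same reduction to the ratio $\sum_{i\geq 2}(\nabla f(x_k)^Tv_i)^2/(\nabla f(x_k)^Tv_1)^2$, the same elementary bounds $\sqrt{2/(1+x_k^Tx^*)}\leq 1+\sqrt{1-x_k^Tx^*}$ and $1-x_k^Tx^*\leq\sqrt{1-x_k^Tx^*}$ that collapse the factor to $\bigl(D+E+(E+F)s_k\bigr)/\bigl(A-(B+C)s_k\bigr)$, the same algebraic identification of the two thresholds in $\delta$, and the same induction keeping $1-x_k^Tx^*<\delta$. The only point stated more tersely than in the paper is why the decay is genuinely geometric (the paper notes that $\gamma_k$ is monotone increasing in $1-x_k^Tx^*$, hence $\gamma_k\leq\gamma_0$ and the contraction factor is uniformly below $1$), but your monotonicity of $s_k$ supplies exactly this.
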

\begin{proof}
In order to prove the main result, we first 
show the following two statements:
\begin{enumerate}[wide, labelwidth=!, labelindent=2pt]
    \item If $1-x_k^Tx^* < \delta$, then we have
    \begin{align}
    \label{lemma2:sub-result}
    x_{k+1}^Tx^* > 0, 1-(x_{k+1}^Tx^*)^2 < \left( \frac{D+E}{A} + \gamma_{k} \right)^2 \left( 1-(x_{k}^Tx^*)^2 \right), \frac{D+E}{A} + \gamma_{k} < 1
    \end{align}
    for all $k \geq 0$ where 
    \begin{align}
    \label{lemma2:def-epsilon}
    \gamma_k = \frac{\big( A(E+F)+(B+C)(D+E) \big) \sqrt{1-x_k^Tx^*}}{A \left( A-(B+C)\sqrt{1-x_k^Tx^*}\right)}.
    \end{align}
    Since $0< x_k^Tx^* \leq 1$, we have $\sqrt{1-x_k^Tx^*} \geq 1-x_k^Tx^*$. Using $x^*= v_1$, the update rule of SCI-PI, \eqref{lemma2:condition-v1}, and the fact that $\delta \leq (A/(B+C))^2$, we have
    \begin{align}
    \label{lemma2:x-k-1-positive}
    x_{k+1}^Tx^* = \frac{\nabla f(x_k)^Tv_1}{\| \nabla f(x_k) \|} \geq \frac{A-B(1-x_k^Tx^*)-C\sqrt{1-x_k^Tx^*}}{\| \nabla f(x_k) \|} > 0
    \end{align}
    since
    \begin{equation*}
        \frac{A-B(1-x_k^Tx^*)-C\sqrt{1-x_k^Tx^*}}{\| \nabla f(x_k) \|} \geq \frac{A-(B+C) \sqrt{1-x_k^Tx^*}}{\| \nabla f(x_k) \|} > 0.
    \end{equation*}
    Using the same arguments in Lemma~\ref{lemma:SCI-PI-local-convergence-small-delta}, we have 
    \begin{align}\label{lemma2:rewrite}
    1 - (x_{k+1}^Tx^*)^2 \leq \frac{\sum_{i=2}^d (\nabla f(x_k)^Tv_i)^2}{(\nabla f(x_{k})^Tv_1)^2}.
    \end{align}
    By \eqref{lemma2:x-k-1-positive}, we have
    \begin{align*}
    A - B (1-x_k^Tx^*) - C \sqrt{1-x_k^Tx^*} > 0.    
    \end{align*}
    Therefore, by plugging \eqref{lemma2:condition-v1} and \eqref{lemma2:condition-vi} into \eqref{lemma2:rewrite} and using that $x_k^Tx^* > 0$, we have
    \begin{align}
    1 - (x_{k+1}^Tx^*)^2
    &\leq \left( \frac{D \sqrt{1-(x_k^Tx^*)^2} + E \sqrt{2(1-x_k^Tx^*)} + \frac{F}{2} \|x_k-x^*\|^2}{A - B (1-x_k^Tx^*) - C \sqrt{1-x_k^Tx^*}} \right)^2 \nonumber \\
    &= \left( \frac{D + E \sqrt{1+\frac{1-x_k^Tx^*}{1+x_k^Tx^*}}+ F \sqrt{\frac{1-x_k^Tx^*}{1+x_k^Tx^*}}}{A - B (1-x_k^Tx^*) - C \sqrt{1-x_k^Tx^*}} \right)^2 \left( 1 - (x_k^Tx^*)^2 \right) \nonumber \\
    &\leq \left( \frac{D + E \left( 1+\sqrt{1-x_k^Tx^*} \right)+ F \sqrt{{1-x_k^Tx^*}}}{A - (B+C) \sqrt{1-x_k^Tx^*}} \right)^2 \left( 1 - (x_k^Tx^*)^2 \right) \nonumber \\
    &= \left( \frac{D+E}{A} + \gamma_k \right)^2 \left( 1 - (x_k^Tx^*)^2 \right)
    \label{eq:GPM-opt-gap}
    \end{align}
    where we use the fact that $\sqrt{1+x} \leq 1+\sqrt{x}$ for $x \geq 0$ to derive the second inequality. Lastly, from
    $$
    \sqrt{1-x_k^Tx^*} < \sqrt{\delta} \leq  \frac{A-D-E}{B+C+E+F},
    $$
    we have
    \begin{align*}
    \gamma_k < 1 - \frac{D+E}{A}.
    \end{align*}
    \item Using mathematical induction, we show that if
    \begin{align}
    \label{lemma2:initial}
    1-x_0^Tx^* < \delta,
    \end{align}
    then, for all $k \geq 0$, we have
    \begin{align}
    \label{lemma2:opt-gap}
    1 - x_{k}^Tx^* < \delta.
    \end{align}
    By \eqref{lemma2:initial}, we have $1-x_0^Tx^* < \delta$, which proves the base case. Next, suppose that we have $1-x_k^Tx^* < \delta$. Then, we have \eqref{lemma2:sub-result}. Also, from $\delta < 1$, we have $x_k^Tx^*>0$. Since
    \begin{equation*}
    x_{k+1}^Tx^* > 0, \, \, x_k^Tx^* > 0, \, \, 1-(x_{k+1}^Tx^*)^2 < 1-(x_k^Tx^*)^2,
    \end{equation*}
    we have 
    \begin{align*}
    1 - x_{k+1}^Tx^* < 1 - x_{k}^Tx^* < \delta.
    \end{align*}
    This completes the induction proof.
\end{enumerate}
Now, we prove the main statement. Since \eqref{lemma2:opt-gap} holds for all $k \geq 0$, by repeatedly applying \eqref{lemma2:sub-result}, we obtain
\begin{align}
\label{lemma2:main-recurrence}
1 - (x_k^Tx^*)^2
\leq 
\prod_{t=0}^{k-1} \bigg( \frac{D+E}{A} + \gamma_t \bigg)^{2} \left( 1 - (x_0^Tx^*)^2 \right), \, \, \textup{and} \, \, \frac{D+E}{A} + \gamma_k < 1.
\end{align}
Since $({D+E})/{A} + \gamma_k < 1$ for all $k \geq 0$, $1-(x_k^Tx^*)^2$ is monotone decreasing, and so is $1-x_k^Tx^*$ by non-negativity. Moreover, from that $\gamma_k$ is a monotone increasing function of $1-x_k^Tx^*$, we have $\gamma_{k+1} \leq \gamma_k$ for all $k \geq 0$, resulting in
\begin{align*}
\prod_{t=0}^{k-1} \bigg( \frac{D+E}{A} + \gamma_t \bigg)^{2} \leq \left( \frac{D+E}{A} + \gamma_0 \right)^{2k}.
\end{align*}
Since $({D+E})/{A} + \gamma_0 < 1$ by \eqref{lemma2:sub-result}, we have $(x_k^Tx^*)^2 \rightarrow 1$. Due to $x_k^Tx^* > 0$ for all $k\geq 0$, this implies $x_k \rightarrow x^*$, and thus $\lim_{k \rightarrow \infty} \gamma_k = 0$ due to \eqref{lemma2:def-epsilon}. With \eqref{lemma2:main-recurrence}, this gives the desired result.
\end{proof}

\subsection{For the Proofs of Theorem~\ref{thm:joint-GPM-local-convergence} and Theorem~\ref{thm:restricted-GPM-local-convergence}}
\begin{lemma}
\label{lemma:GPM-x}
Suppose that $f(w,z)$ is scale invariant in $w \in \mathbb{R}^{d_w}$ for each $z \in \mathbb{R}^{d_z}$ and twice continuously differentiable on an open set containing $\partial \mathcal{B}_{d_w} \times \partial \mathcal{B}_{d_z}$. Let $(w^*,z^*)$ be a point satisfying
\begin{equation*}
\nabla_w f(w^*,z^*) = \lambda_w^* w^*, \quad \lambda_w^* > \bar{\lambda}_{2}^w = {\textstyle \max_{2\leq i \leq d_w}} |\lambda_{i}^w|, \quad w^* = v_1^w
\end{equation*}
where $(\lambda^w_i,v^w_i)$ is an eigen-pair of $\nabla_{ww}^2 f(w^*,z^*)$. Then, for any $w \in \partial \mathcal{B}_{d_w}$ and $z \in \partial \mathcal{B}_{d_z}$, 
we have
\begin{align*}
\nabla_w f(w,z)^T v_1^w = \lambda^*_w + (z-z^*)^T \nabla_{zw}^2 f(w^*, z^*) w^* 
+ \alpha^w(w,z)
\end{align*}
and
\begin{align*}
\sum_{i=2}^{d_w} (\nabla_w f(w,z)^T v_i^w)^2 &\leq \left( \bar{\lambda}_{2}^w \sqrt{1-(w^Tw^*)^2} + \nu^{wz} \|z-z^*\| + \beta^w(w,z) 
\right)^2
\end{align*}
where
\begin{align*}
\alpha^w(w,z) = o \left( \left \|
\begin{bmatrix}
w - w^* \\
z - z^*
\end{bmatrix}
\right \| \right)
, \quad
\beta^w(w,z) = o \left( \left \|
\begin{bmatrix}
w - w^* \\
z - z^*
\end{bmatrix}
\right \| \right).
\end{align*}
Therefore, we have
\begin{align*}
1- \frac{(\nabla_w f(w,z)^Tw^*)^2}{\|\nabla_w f(w,z) \|^2} \leq 
\left(
\frac{\bar{\lambda}_2^w}{\lambda^*_w} \sqrt{1-(w^Tw^*)^2} + \frac{\nu^{wz}}{\lambda^*_w} \|z-z^*\| + \theta^w(w,z)
\right)^2
\end{align*}
where
\begin{equation*}
\nu^{wz} = \| \nabla_{wz}^2 f(w^*,z^*) \|, \quad
\theta^w(w,z) = o \left( \left \|
\begin{bmatrix}
w - w^* \\
z - z^*
\end{bmatrix}
\right \| \right).
\end{equation*}
\end{lemma}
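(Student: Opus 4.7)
The plan is to write a joint Taylor expansion of $\nabla_w f$ around $(w^*,z^*)$ and then read off the coordinates of that expansion in the orthonormal basis $\{v_i^w\}$. Concretely, by twice continuous differentiability on an open set containing $\partial\mathcal{B}_{d_w}\times\partial\mathcal{B}_{d_z}$,
$$\nabla_w f(w,z)=\nabla_w f(w^*,z^*)+\nabla_{ww}^2 f(w^*,z^*)(w-w^*)+\nabla_{wz}^2 f(w^*,z^*)(z-z^*)+R(w,z),$$
with $R(w,z)=o(\|(w-w^*,z-z^*)\|)$. All three bounds claimed by the lemma will follow by projecting this identity onto $v_1^w=w^*$ or onto $v_i^w$ for $i\geq 2$, exactly as in the proofs of Theorems~\ref{thm:GPM-local-convergence} and \ref{thm:SCI-PI-local-convergence-SSI}.

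For the first bound, I would take the inner product with $w^*$. The leading term is $\lambda_w^*$ (from $\nabla_w f(w^*,z^*)=\lambda_w^*w^*$); the Hessian-in-$w$ term becomes $\lambda_1^w(w-w^*)^Tw^*=-\lambda_1^w(1-w^Tw^*)$ because $w^*$ is an eigenvector of $\nabla_{ww}^2 f(w^*,z^*)$; the cross term is exactly $(z-z^*)^T\nabla_{zw}^2 f(w^*,z^*)w^*$; and $R(w,z)^Tw^*$ is $o(\|(w-w^*,z-z^*)\|)$. Since $1-w^Tw^*=\tfrac12\|w-w^*\|^2=o(\|w-w^*\|)$, the first and last contributions collapse into $\alpha^w(w,z)=o(\|(w-w^*,z-z^*)\|)$, giving the claimed identity.

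For the second bound, I would project onto $v_i^w$ for each $i\geq 2$, using orthogonality $(w^*)^Tv_i^w=0$ to kill $\nabla_w f(w^*,z^*)^Tv_i^w$ and to reduce $(w-w^*)^T\nabla_{ww}^2 f(w^*,z^*)v_i^w$ to $\lambda_i^w\,w^Tv_i^w$. Summing squares over $i\geq 2$ and applying Minkowski's inequality gives three pieces, each of which I would bound separately: $\sqrt{\sum_{i\geq 2}(\lambda_i^w w^Tv_i^w)^2}\leq \bar\lambda_2^w\sqrt{\sum_{i\geq 2}(w^Tv_i^w)^2}=\bar\lambda_2^w\sqrt{1-(w^Tw^*)^2}$ since $\{v_i^w\}$ is orthonormal and $\|w\|=1$; $\sqrt{\sum_{i\geq 2}((z-z^*)^T\nabla_{zw}^2 f(w^*,z^*)v_i^w)^2}\leq \|\nabla_{wz}^2 f(w^*,z^*)(z-z^*)\|\leq \nu^{wz}\|z-z^*\|$; and the remainder term yields $\sqrt{\sum_{i\geq 2}(R^Tv_i^w)^2}\leq\|R\|=:\beta^w(w,z)=o(\|(w-w^*,z-z^*)\|)$. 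Squaring produces the stated square bound.

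For the third inequality, I would combine the identity $\|\nabla_w f(w,z)\|^2=\sum_{i=1}^{d_w}(\nabla_w f(w,z)^Tv_i^w)^2$ (orthonormality again) with the previous two bounds: the numerator $\|\nabla_w f(w,z)\|^2-(\nabla_w f(w,z)^Tw^*)^2$ is exactly $\sum_{i\geq 2}(\nabla_w f(w,z)^Tv_i^w)^2$, while $\|\nabla_w f(w,z)\|\geq\nabla_w f(w,z)^Tw^*=\lambda_w^*+\alpha^w(w,z)>0$ for $(w,z)$ sufficiently close to $(w^*,z^*)$. Dividing yields $\sqrt{1-(\nabla_w f(w,z)^Tw^*)^2/\|\nabla_w f(w,z)\|^2}$ is at most $[\bar\lambda_2^w\sqrt{1-(w^Tw^*)^2}+\nu^{wz}\|z-z^*\|+\beta^w(w,z)]/[\lambda_w^*+\alpha^w(w,z)]$. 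The main (minor) obstacle is then to split this ratio into the advertised leading terms $(\bar\lambda_2^w/\lambda_w^*)\sqrt{1-(w^Tw^*)^2}+(\nu^{wz}/\lambda_w^*)\|z-z^*\|$ plus a single residual $\theta^w(w,z)$, and to verify that $\theta^w(w,z)=o(\|(w-w^*,z-z^*)\|)$. This last step reduces to writing $1/(\lambda_w^*+\alpha^w)=1/\lambda_w^*+O(\alpha^w)$ near $(w^*,z^*)$ and checking that $\beta^w/\lambda_w^*$ and the $O(\alpha^w)$ perturbation times an $O(\|(w-w^*,z-z^*)\|)$ factor both remain $o(\|(w-w^*,z-z^*)\|)$, which is routine since $\alpha^w,\beta^w$ are already $o$-terms.
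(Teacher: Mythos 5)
Your proposal is correct and takes essentially the same route as the paper: a Taylor expansion of $\nabla_w f$ at $(w^*,z^*)$ projected onto the eigenbasis of $\nabla^2_{ww}f(w^*,z^*)$, with the eigenvector relation $\nabla^2_{ww}f(w^*,z^*)w^*=\lambda_1^w w^*$ reducing the first-order term along $v_1^w$ to $-\lambda_1^w(1-w^Tw^*)=o(\|w-w^*\|)$, and the ratio bound obtained by dividing the two projections and absorbing the perturbations into $\theta^w$. The only cosmetic difference is that you control $\sum_{i\ge 2}(\nabla_w f(w,z)^Tv_i^w)^2$ via Minkowski's inequality where the paper expands the square and applies Cauchy--Schwarz to each cross term; both give the same bound up to replacing the paper's remainder $\sqrt{d_w}\max_i|R_i|$ by $\|R\|$.
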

\begin{proof}
Since $\nabla_{ww}^2 f(w^*,z^*)$ is real and symmetric, without loss of generality, we assume that $\{v_1^w,\ldots,v_{d_w}^w\}$ forms an orthogonal basis in $\mathbb{R}^{d_w}$.

By Taylor expansion of $\nabla_w f (w,z)^Tv_i^w$ at $(w^*,z^*)$, we have
\begin{align*}
\nabla_w f(w, z)^T v_i^w = \nabla_x f(w^*, z^*)^T v_i^w + 
\begin{bmatrix}
w - w^* \\
z - z^*
\end{bmatrix}
^T
\begin{bmatrix}
\nabla_{ww}^2 f(w^*,z^*) \\
\nabla_{zw}^2 f(w^*,z^*)
\end{bmatrix}
v_i^w + R_i^w(w,z)
\end{align*}
where
\begin{equation*}
R_i^w(w,z) = 
o \left( \left \|
\begin{bmatrix}
w - w^* \\
z - z^*
\end{bmatrix}
\right \| \right).
\end{equation*}
Using $\nabla_w f(w^*,z^*) = \lambda^*_w w^*$ and $w^* = v_1^w$, we have
\begin{align*}
\nabla_w f(w^*,z^*)^T v_1^w = \lambda^*_w, \quad (w-w^*)^T \nabla_{ww}^2 f(w^*,z^*) v_1^w = - \lambda_1^w ( 1-w_k^Tw^*).
\end{align*}
Therefore, we obtain
\begin{align}
\label{lemma:restricted-grad-x-k-v-1}
\nabla_w f(w,z)^T v_1^w = \lambda^*_w + (w-w^*)^T \nabla_{zw}^2 f(w^*, z^*) w^* + \alpha^w(w,z)
\end{align}
where
\begin{equation*}
\alpha^w(w,z) = R_1^w(w,z) - \lambda_1^w ( 1-w^Tw^*) = o \left( \left \|
\begin{bmatrix}
w - w^* \\
z - z^*
\end{bmatrix}
\right \| \right).
\end{equation*}

In the same way, for $2 \leq i \leq d_w$, we have
\begin{equation*}
\nabla_w f(w^*,z^*)^T v_i^w = \lambda^*_w (w^*)^Tv_i^w = 0, \quad (w-w^*)^T \nabla_{ww}^2 f(w^*,z^*) v_i^w = \lambda_i^w w^Tv_i^w,
\end{equation*}
resulting in
\begin{align}
\label{lemma:restricted-grad-x-k-v-i}
\nabla_w f(w,z)^T v_i^w = \lambda_i^w w^Tv_i^w + (z-z^*)^T \nabla_{zw}^2 f(w^*, z^*) v_i^w + R_i^w(w,z).
\end{align}
From \eqref{lemma:restricted-grad-x-k-v-i}, we obtain
\begin{align*}
\sum_{i=2}^{d_w} (\nabla_w f(w,z)^T v_i^w)^2 
& = 
\sum_{i=2}^{d_w} (\lambda_i^w)^2 (w^Tv_i^w)^2 + \sum_{i=2}^{d_w} \left( (z-z^*)^T \nabla_{zw}^2 f(w^*, z^*) v_i^w \right)^2 \\
& \, \, + \sum_{i=2}^{d_w} (R_i^w(w,z))^2 + 2 \sum_{i=2}^{d_w} \lambda_i^w (w^Tv_i^w) (z-z^*)^T \nabla_{zw}^2 f(w^*, z^*) v_i^w  \\
& \, \, + 2 \sum_{i=2}^{d_w} \lambda_i^w (w^Tv_i^w) R_i^w(w,z) \\
& \, \, + 2 \sum_{i=2}^{d_w} (z-z^*)^T \nabla_{zw}^2 f(w^*, z^*) v_i^w R_i^w(w,z).
\end{align*}
Since $\{v_1^w,\ldots,v_{d_w}^w\}$ forms an orthogonal basis in $\mathbb{R}^{d_w}$, with $w^* = v_1^w$ and $\|w\|^2=1$, we have
\begin{align*}
\sum_{i=2}^{d_w} (\lambda_i^w)^2 (w^Tv_i^w)^2 \leq (\bar{\lambda}_2^w)^2 \left( 1 - (w^Tw^*)^2 \right)
\end{align*}
and
\begin{align*}
\sum_{i=2}^{d_w} \left( (z-z^*)^T \nabla_{zw}^2 f(w^*, z^*) v_i^w \right)^2 \leq 
\| (z-z^*)^T \nabla_{zw}^2 f(w^*, z^*) \|^2 
\leq 
(\nu^{wz})^2 \|z-z^*\|^2.
\end{align*}
Let $\bar{R}_2^w (w,z) = {\textstyle \max_{2 \leq i \leq d_w} |R_i^w(w,z)|}$. 
Note that 
\begin{equation*}
\bar{R}_2^w (w,z) = o \left( \left \|
\begin{bmatrix}
w - w^* \\
z - z^*
\end{bmatrix}
\right \| \right).
\end{equation*}
Using the Cauchy-Shwartz inequality, we have
\begin{align*}
\sum_{i=2}^{d_w} \lambda_i^w (w^Tv_i^w) (z-z^*)^T \nabla_{zw}^2 f(w^*, z^*) v_i^w \leq \bar{\lambda}_2^w \nu^{wz} \| z - z^* \| \sqrt{1-(w^Tw^*)^2}.
\end{align*}
Also, we have
\begin{align*}
\sum_{i=2}^{d_w} \lambda_i^w (w^Tv_i^w) R_i^w(w,z)
\leq \bar{\lambda}_2^w \bar{R}_2^w (w,z) \sqrt{d_w} \sqrt{1-(w^Tw^*)^2}
\end{align*}
and
\begin{align*}
\sum_{i=2}^{d_w} R_i^w(w,z) (z - z^*)^T \nabla_{zw}^2 f(w^*,z^*) v_i^w \leq  \nu^{wz} \bar{R}_2^w (w,z) \sqrt{d_w}  \|z-z^*\|.
\end{align*}
Therefore, we obtain
\begin{align}
\label{lemma:restricted-grad-x-v-i-square-sum}
\sum_{i=2}^{d_w} (\nabla_w f(w,z)^T v_i^w)^2 &\leq \left( \bar{\lambda}_2^w \sqrt{1-(w^Tw^*)^2} + \nu^{wz} \|z-z^*\| + \beta^w(w,z)
\right)^2
\end{align}
where
\begin{equation*}
\beta^w(w,z) = \bar{R}_2^w (w,z) \sqrt{d_w}  
= o \left( \left \|
\begin{bmatrix}
x_k - x^* \\
y_k - y^*
\end{bmatrix}
\right \| \right).
\end{equation*}

Since $\{v_1^w,\ldots,v_{d_w}^w\}$ forms an orthogonal basis in $\mathbb{R}^{d_w}$ and $|w^T w^*|\leq \|w\|\|w^*\|=1$, we have
\begin{equation*}
1- \frac{(\nabla_w f(w,z)^Tw^*)^2}{\|\nabla_w f(w,z) \|^2}
\leq 
\frac{\sum_{i=2}^{d_w} (\nabla_w f(w,z)^Tv_i^w)^2}{(\nabla_w f(w,z)^Tv_1^w)^2}.
\end{equation*}
Using \eqref{lemma:restricted-grad-x-k-v-1} and \eqref{lemma:restricted-grad-x-v-i-square-sum}, we have
\begin{align*}
\frac{\sum_{i=2}^{d_w} (\nabla_w f(w,z)^Tv_i^w)^2}{(\nabla_w f(w,z)^Tv_1^w)^2}
&\leq 
\left(
\frac{\bar{\lambda}_2^w}{\lambda^*_w}  \sqrt{1-(w^Tw^*)^2} + \frac{\nu^{wz}}{\lambda^*_w} \| z-z^*\| + \theta^w(w,z)
\right)^2
\end{align*}
where
\begin{equation*}
\begin{aligned}
\theta^w(w,z) &= \frac{\beta^w (w,z)}{\lambda^*_w} - 
\left(
\frac{
\bar{\lambda}_2^w \sqrt{1-(w^Tw^*)^2} + \nu^{wz} \|z-z^*\| + \sqrt{d_w} \beta^w (w,z)}{\lambda^*_w} \right) \\
&\quad \cdot
\left(
\frac{(z-z^*)^T \nabla_{zw}^2 f(w^*, z^*) w^* 
+ \beta^w (w,z)}
{\lambda^*_w + (z-z^*)^T \nabla_{zw}^2 f(w^*, z^*) w^* + \beta^w (w,z)}
\right).
\end{aligned}
\end{equation*}
Since
\begin{equation*}
|(z-z^*)^T \nabla_{zw}^2 f(w^*, z^*) w^*| \leq \nu^{wz} \|z-z^*\|,
\end{equation*}
we have
\begin{align*}
|(z-z^*)^T \nabla_{zw}^2 f(w^*, z^*) w^*| \sqrt{1-(w^Tw^*)^2} \leq \frac{1}{2} \left( 1-(w^Tw^*)^2 \right) + \frac{1}{2} (\nu^{wz})^2 \| z-z^*\|^2
\end{align*}
and
\begin{align*}
\nu^{wz} |(z-z^*)^T \nabla_{zw}^2 f(w^*, z^*) w^*| \|z-z^*\| \leq (\nu^{wz})^2 \|z-z^*\|^2.
\end{align*}
From
\begin{align*}
1-(w^Tw^*)^2 = o \left( \left \|
\begin{bmatrix}
w - w^* \\
z - z^*
\end{bmatrix}
\right \| \right),
\quad \|z-z^*\|^2 = o \left( \left \|
\begin{bmatrix}
w - w^* \\
z - z^*
\end{bmatrix}
\right \| \right),
\end{align*}
we finally obtain
\begin{equation*}
\theta^w(w,z) = o \left( \left \|
\begin{bmatrix}
w - w^* \\
z - z^*
\end{bmatrix}
\right \| \right).
\end{equation*}
This completes the proof.
\end{proof}

\begin{lemma}
\label{lemma:GPM-y}
Suppose that $f(w,z)$ is $\mu$-strongly concave in $z \in \mathbb{R}^{d_z}$ with an $L$-Lipschitz continuous $\nabla_z f(w,z)$ for each $w \in \partial \mathcal{B}_{d_w}$ and three-times continously differentiable with respect to $x$ and $y$ on an open set containing $\partial \mathcal{B}_{d_w}$ and $\mathbb{R}^{d_z}$, respectively. Let $(w^*,z^*)$ be a point such that
$\nabla_z f(w^*,z^*) = 0$. Then, for any $w \in \partial \mathcal{B}_{d_w}$ and $z \in \partial \mathcal{B}_{d_z}$, with $\alpha = 2/(L+\mu)$, we have
\begin{align}
\label{proof:restricted-recurrence-y}
\| z + \alpha \nabla_z f(w,z) - z^*\| \leq \left( \frac{2\nu^{zw}}{L+\mu} \right) \|w-w^*\| + \left( \frac{L-\mu}{L+\mu} \right) \| z - z^* \| + \theta^z (w,z)
\end{align}
where
\begin{equation*}
\nu^{zw} = \| \nabla_{zw}^2 f(w^*,z^*) \|, \quad
\theta^z (w,z) = o \left( \left \|
\begin{bmatrix}
w - w^* \\
z - z^*
\end{bmatrix}
\right \| \right).
\end{equation*}
\end{lemma}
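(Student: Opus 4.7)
The plan is to decompose the gradient ascent step into two pieces by adding and subtracting the gradient evaluated at $w^*$:
\[
z + \alpha \nabla_z f(w,z) - z^* = \bigl[ z + \alpha \nabla_z f(w^*,z) - z^* \bigr] + \alpha \bigl[ \nabla_z f(w,z) - \nabla_z f(w^*,z) \bigr],
\]
and apply the triangle inequality. The first bracket is a pure gradient ascent step on $f(w^*, \cdot)$, which by hypothesis is $\mu$-strongly concave with $L$-Lipschitz gradient and has $z^*$ as its unique stationary point (since $\nabla_z f(w^*,z^*) = 0$). The second bracket measures the perturbation caused by moving $w$ away from $w^*$ and I will expand it to first order in $(w-w^*)$.

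For the first bracket, I would invoke the standard contraction bound for gradient ascent on a smooth strongly concave function with the stated step size $\alpha = 2/(L+\mu)$: namely,
\[
\bigl\| z + \alpha \nabla_z f(w^*,z) - z^* \bigr\| \leq \tfrac{L-\mu}{L+\mu} \|z - z^*\|.
\]
This is the classical Nesterov-type estimate obtained by combining strong concavity and Lipschitz continuity of $\nabla_z f(w^*, \cdot)$ and choosing the optimal step. I would cite this as a known fact rather than rederive it.

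For the second bracket, since $f$ is three-times continuously differentiable, I would write
\[
\nabla_z f(w,z) - \nabla_z f(w^*,z) = \int_0^1 \nabla_{zw}^2 f\bigl(w^* + t(w-w^*),\, z\bigr)\,(w-w^*)\,dt.
\]
By continuity of $\nabla_{zw}^2 f$ at $(w^*,z^*)$, this integral equals $\nabla_{zw}^2 f(w^*,z^*)(w-w^*)$ plus a remainder of the form $\bigl(\sup_t \|\nabla_{zw}^2 f(w^* + t(w-w^*), z) - \nabla_{zw}^2 f(w^*,z^*)\|\bigr)\|w-w^*\|$. The supremum tends to $0$ as $(w,z) \to (w^*,z^*)$, so the remainder is $o(\|w-w^*\|) = o(\|[w-w^*;\, z-z^*]\|)$. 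Taking norms and multiplying by $\alpha = 2/(L+\mu)$ yields
\[
\alpha \bigl\| \nabla_z f(w,z) - \nabla_z f(w^*,z) \bigr\| \leq \tfrac{2\nu^{zw}}{L+\mu}\|w-w^*\| + \tilde{\theta}^z(w,z),
\]
where $\tilde{\theta}^z(w,z) = o(\|[w-w^*;\, z-z^*]\|)$. Combining the two brackets via triangle inequality gives exactly the stated bound with $\theta^z(w,z) = \tilde{\theta}^z(w,z)$.

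The main obstacle is minor and essentially bookkeeping: verifying that the remainder in the first-order expansion of $\nabla_z f(w,z) - \nabla_z f(w^*,z)$ is genuinely $o(\|[w-w^*;\, z-z^*]\|)$ and not merely $O(\|w-w^*\|^2 + \|z-z^*\|\cdot\|w-w^*\|)$; this is guaranteed by continuity (not just differentiability) of $\nabla_{zw}^2 f$ on the relevant open set, which the three-times continuous differentiability hypothesis supplies. Everything else is an immediate consequence of well-established contraction bounds for gradient ascent under strong concavity and Lipschitz smoothness.
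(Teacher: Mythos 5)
Your proposal is correct and follows essentially the same route as the paper: the same decomposition into a pure gradient-ascent step on $f(w^*,\cdot)$ (bounded by the classical $\frac{L-\mu}{L+\mu}$ contraction) plus a cross term expanded to first order in $w-w^*$ around $\nabla_{zw}^2 f(w^*,z^*)$. The only difference is in how the remainder is controlled: the paper writes explicit second-order Taylor remainders for each component $\nabla_{z,i}f$ (which is why it assumes three-times continuous differentiability), whereas your integral mean-value argument needs only continuity of $\nabla_{zw}^2 f$ near $(w^*,z^*)$ and is, if anything, slightly cleaner.
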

\begin{proof}
Let $\nabla_{z,i} f$ be the $i^{th}$ coordinate of $\nabla_z f$ and 
\begin{equation*}
H_{z,i} = 
\begin{bmatrix}
H_{z,i}^{ww} & H_{z,i}^{wz} \\
H_{z,i}^{zw} & H_{z,i}^{zz} 
\end{bmatrix}
\end{equation*}
be the Hessian of $\nabla_{z,i} f$. By Taylor expansion of $\nabla_{z,i} f (w,z)$ at $(w^*,z)$, we have
\begin{align}
\label{proof:restricted-grad-y-f-x-k-y-k}
\nabla_{z,i} f (w, z) = \nabla_{z,i} f(w^*,z) + \nabla_{zw,\cdot i}^2 f(w^*,z)^T (w-w^*) + R_i^z(w,z)
\end{align}
where $\nabla_{zw,\cdot i}^2 f(w^*,z) = \nabla_w \nabla_{z,i} f(w^*,z)$ denotes the $i^{th}$ column of $\nabla_{zw}^2 f(w^*,z)$ and
\begin{equation}
R_i^z(w,z) = \frac{1}{2} (w-w^*)^T H_{z,i}^{ww}(\hat{w}^i,z) (w-w^*), \quad \hat{w}^i \in \mathcal{N}(w,w^*).
\label{proof:restricted-def-R-i-z}
\end{equation}
Also, from $f$ being three-times continuously differentiable, we have
\begin{align}
\label{proof:restricted-hessian-x-star-y-k}
\nabla_{zw,\cdot i}^2 f(w^*,z) = \nabla_{zw,\cdot i}^2 f(w^*,z^*) + H_{z,i}^{wz} (w^*,\hat{z}^i) (z - z^*), \quad \hat{z}^i \in \mathcal{N}(z,z^*).
\end{align}
Since
\begin{align*}
|(z-z^*)^T H_{z,i}^{zw}(w^*,\hat{z}^i) (w-w^*)| &\leq \|H_{z,i}^{zw}(w^*,\hat{z}^i)\| \|w-w^*\| \|z-z^*\| \\
&\leq \frac{1}{2} \|H_{z,i}^{zw}(w^*,\hat{z}^i)\| \left( \|w-w^*\|^2 + \|z-z^*\|^2 \right),
\end{align*}
we have
\begin{align}
\label{proof:rextricted-cross-term-bound}
(z-z^*)^T H_{z,i}^{wz}(w^*,\hat{z}^i) (w-w^*) = o \left( \left \|
\begin{bmatrix}
w - w^* \\
z - z^*
\end{bmatrix}
\right \| \right).
\end{align}
By \eqref{proof:restricted-grad-y-f-x-k-y-k}, \eqref{proof:restricted-def-R-i-z},  \eqref{proof:restricted-hessian-x-star-y-k}, and \eqref{proof:rextricted-cross-term-bound}, we have
\begin{equation}
\label{proof:restricted-grad-y-final}
\begin{aligned}
\nabla_z f (w, z) &= \nabla_z f(w^*,z) + \nabla_{zw}^2 f(w^*,z^*) (w - w^*) + \bar{R}^z(w,z)
\end{aligned}
\end{equation}
where
\begin{equation*}
\bar{R}_i^z(w,z) = R_i^z(w,z) + (z-z^*)^T H_{z,i}^{zw}(w^*,\hat{z}^i) (w-w^*)
=
o \left( \left \|
\begin{bmatrix}
w - w^* \\
z - z^*
\end{bmatrix}
\right \| \right).
\end{equation*}

Using \eqref{proof:restricted-grad-y-final}, we have
\begin{align*}
z + \alpha \nabla_z f (w, z) - z^* &= z - z^* + \alpha \nabla_z f(w^*,z) + \alpha \nabla_{zw}^2 f(w^*,z^*) (w-w^*) + \bar{R}^z(w,z),
\end{align*}
resulting in
\begin{equation}
\label{proof:restricted-y-update-rule}
\begin{aligned}
\| z + \alpha \nabla_z f (w, z) - z^* \| & \leq \| z - z^* + \alpha \nabla_z f(w^*, z) \| \\
& \, \, + \alpha \| \nabla_{zw}^2 f(w^*,z^*) (w-w^*) \| + \| \bar{R}^z (w,z) \|.
\end{aligned}
\end{equation}
Since $-f(w^*,z)$ is $\mu$-strongly convex in $z$ with an $L$-Lipschitz continuous gradient $-\nabla_z f(w^*,z)$, by theory of convex optimization \citep[p.~270]{bubeck2015convex}, we have
\begin{align}
\label{proof:GA-y-k-bound-term-1}
\| z - z^* + \alpha \nabla_z f(w^*, z) \| \leq \left( \frac{L-\mu}{L+\mu} \right) \| z - z^* \|
\end{align}
due to $\alpha = {2}/(L+\mu)$. Also, we have
\begin{align}
\label{proof:GA-y-k-bound-term-2}
\alpha \| \nabla_{zw}^2 f(w^*,z^*) (w-w^*) \| \leq \left( \frac{2\nu^{zw}}{L+\mu} \right) \|w - w^* \|.
\end{align}

Plugging \eqref{proof:GA-y-k-bound-term-1}, \eqref{proof:GA-y-k-bound-term-2} into \eqref{proof:restricted-y-update-rule}, we finally obtain
\begin{align*}
\| z - z^* + \alpha \nabla_z f(w^*, z) \| \leq \left( \frac{L-\mu}{L+\mu} \right) \| z - z^* \| + \left( \frac{2\nu^{zw}}{L+\mu} \right) \|w-w^*\| + \theta^z(w,z)
\end{align*}
where
\begin{equation*}
\theta^z(w,z)
=
\| \bar{R}^z (w,z) \|
=
o \left( \left \|
\begin{bmatrix}
w - w^* \\
z - z^*
\end{bmatrix}
\right \| \right).
\end{equation*}
\end{proof}


\begin{lemma}
\label{lemma:asymptotic-spectral-property}
Let $M$ be a $2 \times 2$ matrix such that
\begin{align*}
M = \begin{bmatrix}
a & e/b \\
e/c & d
\end{bmatrix}
\end{align*}
for some $a > 0,b >0,c>0,d \geq 0, e \geq 0$ and let $\rho$ be the largest absolute eigenvalue of $M$. Then, there exists a sequence $\omega_t$ such that 
\begin{align*}
\| M^k \| = \prod_{t=0}^{k-1} (\rho + \omega_t) \quad \text{and} \quad \text{lim}_{t \rightarrow \infty} \, \omega_t = 0.
\end{align*}
\end{lemma}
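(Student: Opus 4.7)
The plan is to invoke a Gelfand-type argument, exploiting the fact that $M$ is a $2\times 2$ matrix with a simple dominant real eigenvalue. The key is to show that the ratios $\|M^{t+1}\|/\|M^t\|$ converge to $\rho$, and then obtain the product representation by telescoping.

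First I would handle the trivial case $e=0$: then $M$ is diagonal with entries $a,d\geq 0$, so $\|M^k\|=\max(a,d)^k=\rho^k$ and $\omega_t\equiv 0$ works. Assuming $e>0$, I would compute the eigenvalues of $M$ explicitly as
\[
\lambda_{1,2}=\frac{a+d}{2}\pm\sqrt{\left(\frac{a-d}{2}\right)^2+\frac{e^2}{bc}},
\]
noting that (i) both are real, (ii) they are distinct since the discriminant is strictly positive, and (iii) because $a+d>0$ the larger one, call it $\lambda_1$, equals $\rho$ and satisfies $\rho>|\lambda_2|$. Hence $M$ is diagonalizable with a simple dominant eigenvalue, and we have the spectral decomposition $M=\rho\,v_1u_1^{\!\top}+\lambda_2 v_2u_2^{\!\top}$, where $v_i,u_i$ are the right/left eigenvectors normalized so that $u_i^{\!\top}v_j=\delta_{ij}$.

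The next step is the decisive one. Raising to the $k$-th power gives $M^k=\rho^k v_1u_1^{\!\top}+\lambda_2^k v_2u_2^{\!\top}$, so
\[
\frac{M^k}{\rho^k}\;\longrightarrow\;C\;:=\;v_1u_1^{\!\top}\qquad\text{as }k\to\infty,
\]
since $|\lambda_2/\rho|<1$. Because $v_1$ and $u_1$ are nonzero, $C$ is a nonzero rank-one matrix; in particular $\|C\|>0$. Therefore $\|M^k\|/\rho^k\to\|C\|$, which is a positive constant, and the ratios satisfy
\[
\frac{\|M^{k+1}\|}{\|M^k\|}\;=\;\rho\cdot\frac{\|M^{k+1}\|/\rho^{k+1}}{\|M^k\|/\rho^k}\;\longrightarrow\;\rho.
\]

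Finally, I would define
\[
\omega_t\;:=\;\frac{\|M^{t+1}\|}{\|M^t\|}-\rho,
\]
which is well-defined because $\|M^t\|>0$ for all $t$ (indeed $M^t\neq 0$ since $\rho>0$). The limit just established gives $\omega_t\to 0$. Using $\|M^0\|=\|I\|=1$, the product telescopes:
\[
\prod_{t=0}^{k-1}(\rho+\omega_t)\;=\;\prod_{t=0}^{k-1}\frac{\|M^{t+1}\|}{\|M^t\|}\;=\;\frac{\|M^k\|}{\|M^0\|}\;=\;\|M^k\|,
\]
yielding the desired identity. The only place requiring care is the nondegeneracy of $C$ in the rank-one limit, which is what ensures $\|M^k\|/\rho^k$ has a strictly positive limit and hence that consecutive ratios converge; this in turn relies on the dominant eigenvalue being simple, which the explicit eigenvalue computation above establishes.
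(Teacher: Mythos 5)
Your proof is correct. The overall skeleton is the same as the paper's: define $\omega_t = \|M^{t+1}\|/\|M^t\| - \rho$, prove that the consecutive norm ratios converge to $\rho$, and telescope using $\|M^0\|=1$. Where you differ is in how the ratio convergence is established and in how the cases are split. The paper separates the double-root case (which forces $a=d$, $e=0$, so $M=aI$) from the distinct-eigenvalue case, and in the latter it expands the columns of $M^k$ in the eigenbasis and computes the entries of the Gram matrix $(M^k)^T M^k$ explicitly, reading off the limit of $\|M^{k+1}\|/\|M^k\|$ from the entrywise limits of $C^k_{ij}/\lambda_1^{2(k-1)}$. You instead dispose of $e=0$ directly (diagonal $M$), note that $e>0$ makes the discriminant strictly positive so the dominant eigenvalue is simple, and use the spectral decomposition $M = \rho\, v_1 u_1^{\top} + \lambda_2 v_2 u_2^{\top}$ to get the rank-one limit $M^k/\rho^k \to v_1 u_1^{\top} \neq 0$, from which the ratio convergence is immediate. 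Your route is cleaner and generalizes verbatim to any dimension with a simple dominant eigenvalue, and it makes explicit the nondegeneracy point (that $\|M^k\|/\rho^k$ tends to a \emph{strictly positive} limit) which the paper's entrywise computation implicitly relies on (there one needs $\alpha_1^2+\alpha_2^2>0$, guaranteed since $Mv_1=\lambda_1 v_1\neq 0$); the paper's computation, on the other hand, is fully elementary and self-contained for the $2\times 2$ case.
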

\begin{proof}
The characteristic equation reads
\begin{equation*}
\det(M-\lambda I) = \lambda^2 - \lambda (a+d)  + ad - \dfrac{e^2}{bc} = 0
\end{equation*} 
with the discriminant of 
\begin{equation*}
(a-d)^2 + \dfrac{4e^2}{bc} \geq 0.
\end{equation*}
Thus, all eigenvalues are real.

First, we consider the case when $\det(M-\lambda I)=0$ has a double root. 
We obtain the condition for a double root as
\begin{equation*}
(a-d)^2 + \dfrac{4e^2}{bc} = 0.
\end{equation*}
Since $b>0$ and $c > 0$, this implies
\begin{align*}
a = d, \quad e = 0.
\end{align*}
Therefore, $M=aI$ and $\rho = a$. From $M^k = a^k I$, we have
\begin{align*}
\| M^k \| = \sqrt{a^{2k}} = \rho^k,
\end{align*}
resulting in 
\begin{equation*}
\omega_k = \dfrac{\| M^{k+1} \|}{\| M^k \|} - \rho  = \rho - \rho = 0
\end{equation*}
for all $k \geq 0$.

Next, we consider the case when $M$ has two distinct eigenvalues $\lambda_1$ and $\lambda_2$. Since $a+d > 0$, we have $\lambda_1 + \lambda_2 > 0$. Without loss of generality, assume $\lambda_1 > \lambda_2$. Then, $\rho = \lambda_1$. Let $v_1$ and $v_2$ be corresponding eigenvectors of $\lambda_1$ and $\lambda_2$, respectively. Since $v_1$ and $v_2$ are linearly independent we can represent each column of $M$ as a linear combination of $v_1$ and $v_2$ as
\begin{equation*}
M = [\alpha_1 v_1 + \beta_1 v_2 \quad \alpha_2 v_1 + \beta_2 v_2].
\end{equation*}
By repeatedly multiplying $M$, we obtain
\begin{equation*}
M^k = [\alpha_1 \lambda_1^{k-1} v_1 + \beta_1 \lambda_2^{k-1} v_2 \quad \alpha_2 \lambda_1^{k-1} v_1 + \beta_2 \lambda_2^{k-1} v_2].
\end{equation*}
Let $C^k = (M^k)^TM^k$. Then, we have
\begin{align*}
C^k_{11} = \alpha_1^2 \lambda_1^{2(k-1)} + \beta_1^2 \lambda_2^{2(k-1)} + 2\alpha_1 \beta_1 (\lambda_1 \lambda_2)^{k-1} v_1^Tv_2 \\
C^k_{22} = \alpha_2^2 \lambda_1^{2(k-1)} + \beta_2^2 \lambda_2^{2(k-1)} + 2\alpha_2 \beta_2 (\lambda_1 \lambda_2)^{k-1} v_1^Tv_2 
\end{align*}
and
\begin{align*}
C^k_{12} = \alpha_1 \alpha_2 \lambda_1^{2(k-1)} + \beta_1 \beta_2 \lambda_2^{2(k-1)} + (\alpha_1 \beta_2 + \alpha_2 \beta_1) (\lambda_1 \lambda_2)^{k-1} v_1^Tv_2, \quad C^k_{21} = C^k_{12}.
\end{align*}
Since
\begin{align*}
C^k_{11} \geq 
\alpha_1^2 \lambda_1^{2(k-1)} + \beta_1^2 \lambda_2^{2(k-1)} - 2\alpha_1 \beta_1 (\lambda_1 \lambda_2)^{k-1}
=
\left( \alpha_1 \lambda_1^{k-1} - \beta_1 \lambda_2^{k-1} \right)^2 \geq 0
\end{align*}
and 
\begin{align*}
C^k_{22} \geq 
\alpha_2^2 \lambda_1^{2(k-1)} + \beta_2^2 \lambda_2^{2(k-1)} - 2\alpha_2 \beta_2 (\lambda_1 \lambda_2)^{k-1}
=
\left( \alpha_2 \lambda_1^{k-1} - \beta_2 \lambda_2^{k-1} \right)^2 \geq 0,
\end{align*}
we have
\begin{align*}
\| M^k \| = \sqrt{\frac{1}{2} 
\left[
C_{11}^k + C_{22}^k + \sqrt{\left( C_{11}^k - C_{22}^k \right)^2 + 4(C_{12}^k)^2} 
\right]},
\end{align*}
leading to
\begin{align*}
\dfrac{\| M^{k+1} \|}{\| M^k \|} = \sqrt{\frac{C_{11}^{k+1} + C_{22}^{k+1} + \sqrt{\left( C_{11}^{k+1} - C_{22}^{k+1} \right)^2 + 4(C_{12}^{k+1})^2}}
{C_{11}^k + C_{22}^k + \sqrt{\left( C_{11}^k - C_{22}^k \right)^2 + 4(C_{12}^k)^2}}}.
\end{align*}
From
\begin{equation*}
\lim_{k \rightarrow \infty} \dfrac{C_{11}^k}{\lambda_1^{2(k-1)}} = \alpha_1^2, \quad \lim_{k \rightarrow \infty} \dfrac{C_{22}^k}{\lambda_1^{2(k-1)}} = \alpha_2^2, \quad \lim_{k \rightarrow \infty} \dfrac{C_{12}^k}{\lambda_1^{2(k-1)}} = \lim_{k \rightarrow \infty} \dfrac{C_{21}^k}{\lambda_1^{2(k-1)}} = \alpha_1 \alpha_2,
\end{equation*}
we obtain
\begin{equation*}
\lim_{k \rightarrow \infty} \dfrac{\| M^{k+1} \|}{\| M^k \|} = \sqrt{\lambda_1^2} = \rho.
\end{equation*}
From
\begin{equation*}
\lim_{k \rightarrow \infty} \omega_k = \lim_{k \rightarrow \infty} \dfrac{\| M^{k+1} \|}{\| M^k \|} - \rho = \rho - \rho = 0,
\end{equation*}
we obtain the desired result.
\end{proof}





\vskip 0.2in
\newpage
\bibliographystyle{plainnat}
\bibliography{main}

\end{document}